\theoremstyle{plain}
\newtheorem{thm}{Theorem}[section]
\newenvironment{axioms}
 {\enumerate[label=\textbf{A\arabic*.}, ref=A\arabic*]}
 {\endenumerate}
\newcommand\varitem[1]{\item[\textbf{A\arabic{enumi}\rlap{$#1$}.}]%
  \edef\@currentlabel{A\arabic{enumi}{$#1$}}}
\newenvironment{customthm}[1]
  {\innercustomthm}
  {\endinnercustomthm}
\newenvironment{customdefn}[1]
  {\innercustomdefn}
  {\endinnercustomdefn}
\newenvironment{customcor}[1]
  {\innercustomcor}
  {\endinnercustomcor}
\newenvironment{customlem}[1]
  {\innercustomlem}
  {\endinnercustomlem}
\newenvironment{customexmp}[1]
  {\innercustomexmp}
  {\endinnercustomexmp}
\theoremstyle{definition}
\theoremstyle{remark}
\newtheorem*{note}{Note}
\begin{document}
{\centering\scshape\Large\textsc{The Ordinals as a Consummate Abstraction of Number Systems} \par}
{\centering\scshape\textsc{Alec Rhea} \par}

\vspace{5mm}

\begin{abstract}
In the course of many mathematical developments involving 'number systems' like $\mathbb{N}, \mathbb{Z}, \mathbb{Q}, \mathbb{R}, \mathbb {C}$ etc., it sometimes becomes necessary to abstract away and study certain properties of the number system in question so that we may better understand objects having these properties in a more general setting -- a relevant example for this paper would be Hausdorffs $\eta_\varepsilon$-fields, objects abstracted from the property that there are no two disjoint intervals in $\mathbb{R}$ of cardinality $\alpha \leq \aleph_0$ whose union is $\mathbb{R}$. \\ 

My goal will be to reverse this process, so to speak -- I will begin in one of the most abstract mathematical settings possible, using only the undefined notions and axioms of MK class theory to define and subsequently add structure to the class of all ordinals, $O_n$. I proceed in this fashion until the construction (or a subclass thereof) is structurally isomorphic to whichever 'number system' we wish to consider. In the course of doing so, I construct a proper class of set-sized non-isomorphic discretely ordered rings and a proper class-sized discretely ordered ring, a proper class of non-isomorphic set-sized densely ordered fields that are not real-closed and a proper class sized densely ordered field that is not real-closed, a proper class of non-isomorphic real-closed fields, and a proper class of non-isomorphic algebraically closed fields.  I also propose a new definition for the Surreal and Surcomplex numbers, using a 'final culmination' of the process used to construct real-closed fields and their algebraic closures.\\ 

This process yields a straightforward and logically satisfying method for moving back and forth between well-studied number systems like $\mathbb{R}$ and very abstract number systems, such as an $\eta_{_{\omega^\omega}}$-field. \\
\end{abstract}

\section{Introduction}

\subsection{Undefined Notions}
The undefined notions in this development are those of a {\bf class}, denoted by a capital hollow letter $\mathbb{A},\mathbb{B},\mathbb{C},...$ and {\bf membership}, denoted by $\in$.  Membership may either hold between two classes $\mathbb{A}$ and $\mathbb{B}$, denoted $\mathbb{A}\in\mathbb{B}$, or not hold, denoted by $\mathbb{A}\notin\mathbb{B}$.\\

$\mathbb{A}$ is a {\bf set} iff there is a $\mathbb{B}$ such that $\mathbb{A}\in\mathbb{B}$. $\mathbb{A}$ is a {\bf proper class} iff $\mathbb{A}$ is not a set. Lowercase italic letters {\it a, b, c, ... , x, y, z} are used to denote sets unless otherwise stated.\\

Note that, accordingly, all classes only have other classes (that are sets) as members, and those members also only have other classes as members, so on and so forth down to the empty class.  Although this may seem strange at first blush, it is sufficient for the development of all modern mathematics and greatly simplifies the development of most branches therein when compared to a set theory that allows the existence of non-class based objects, often called {\it urelemente}. \\

\subsection{Axioms}

Note that this section primarily exists for posterity, and for any readers who may not be familiar with the feel of an axiomatic development of the most primitive notions in mathematics.  For readers who are comfortable with such, all of the notions introduced here will be perfectly familiar and this section may be skipped in favor of section 2. There will be a summary list of the axioms at the end of this section if the reader wishes to glance at them before moving on. \\

In order to present the axioms in their most succinct form, it is useful to flesh out a few definitions to use when expressing them. This is done legitimately within the framework provided thus far using a combination of simple definitions and theorems, but all we are really doing is introducing some useful symbolic notation to abbreviate assertions that we wish to express about classes and membership. To wit:\\

\begin{itemize}
\item {\bf A1 -- Axiom of Extensionality} $$\forall \mathbb{A} \forall \mathbb{B} \big[\forall \mathbb{C}\big(\mathbb{C} \in \mathbb{A} \iff \mathbb{C} \in \mathbb{B}\big) \iff \mathbb{A}=\mathbb{B}\big].$$ \\
\end{itemize}

Thus we see that two classes $\mathbb{A}$ and $\mathbb{B}$ are the same class if and only if they have the same members. \\

\begin{customdefn}{1.1}
The expressions $$\mathbb{A}=\mathbb{A}, \ \mathbb{A}=\mathbb{B}, \ \mathbb{A}=\mathbb{C}, \dots, \mathbb{B}=\mathbb{A}, \ \mathbb{B}=\mathbb{B}, \ \mathbb{B}=\mathbb{C}, \dots, \mathbb{C}=\mathbb{A}, \ \mathbb{C}=\mathbb{B}, \ \mathbb{C}=\mathbb{C}, \dots,$$ are all {\bf class-theoretical formulas}, as are $$\mathbb{A}\in\mathbb{A}, \ \mathbb{A}\in\mathbb{B}, \ \mathbb{A}\in\mathbb{C}, \dots, \mathbb{B}\in\mathbb{A}, \ \mathbb{B}\in\mathbb{B}, \ \mathbb{B}\in\mathbb{C}, \dots, \mathbb{C}\in\mathbb{A}, \ \mathbb{C}\in\mathbb{B}, \ \mathbb{C}\in\mathbb{C}, \dots.$$ If $\phi$ and $\psi$ are class-theoretical formulas, then so are $$\neg \phi, \ (\phi \vee \psi), \ (\phi \wedge \psi), \ (\phi \Rightarrow \psi), \ (\phi \iff \psi), \ \exists \mathbb{A}\phi, \ \exists \mathbb{B}\phi, \dots, \forall\mathbb{A}\phi, \ \forall\mathbb{B}\phi, \dots .$$ Class-theoretical formulas can only be obtained by finitely many applications of the processes just mentioned. \\
\end{customdefn}

Class-theoretical formulas are accordingly statements that we make about relationships between classes using equality, membership, the logical relation symbols and the logical quantification symbols. All theorems, definitions and axioms are class-theoretical formulas. For those more familiar with formal logic, they play the role of a {\it predicate}. \\

Note that, under this definition, expressions involving lowercase italic letters {\it a, b, c, \dots} are not technically class-theoretical formulas as they use non-hollow letters. This is acceptable, however, since any expression involving a lowercase italic letter could be replaced by a class-theoretical formula that signified exactly the same claim. For example: \\

\begin{customthm}{1.2}
$\forall a \exists \mathbb{B} \big(a \in \mathbb{B}\big).$
\end{customthm}
\begin{proof}
Suppose that $\exists a\forall \mathbb{B}\big(a\notin \mathbb{B}\big)$. Then $a$ is a proper class, a contradiction. Consequently, $\forall a \exists \mathbb{B} \big(a \in \mathbb{B}\big)$. \\
\end{proof}

This theorem is equivalent, on the basis of Definition 1.1, to: \\

\begin{customthm}{1.2'}
$\forall \mathbb{A}\big[ \exists\mathbb{C}\big(\mathbb{A}\in\mathbb{C}\big) \Rightarrow \exists \mathbb{B} \big(\mathbb{A} \in \mathbb{B}\big)\big].$ \\
\end{customthm}

Thusly, we see that any expression involving lowercase italic letters is really just a convienient way of wrapping up the claim that the classes in question are a priori known to be members of other classes. \\ 

\begin{customthm}{1.3}
$\forall x\big(x\in\mathbb{A} \iff x\in\mathbb{B}\big)\Rightarrow\mathbb{A}=\mathbb{B}.$
\end{customthm}
\begin{proof}
Assume that $\forall x\big(x\in\mathbb{A} \iff x\in\mathbb{B}\big)$. Let $\mathbb{C}\in\mathbb{A}$; then $\mathbb{C}$ is a set, so $\mathbb{C}\in\mathbb{B}$. Now let $\mathbb{D}\in\mathbb{B}$; then $\mathbb{D}$ is a set, so $\mathbb{D}\in\mathbb{A}$. Consequently we have that $\forall \mathbb{C}\big(\mathbb{C} \in \mathbb{A} \iff \mathbb{C} \in \mathbb{B}\big) \Rightarrow \mathbb{A}=\mathbb{B}$ by the extensionality axiom, completing the proof.\\
\end{proof}

\begin{itemize}
\item {\bf A2 -- Class-Building Axioms} \\ If $\phi$ is a class-theoretical formula not involving $\mathbb{A}$, then the following is an axiom: $$\exists \mathbb{A} \forall \mathbb{X} \big( \mathbb{X} \in \mathbb{A} \iff \mathbb{X} \ \text{is a set} \wedge \phi(\mathbb{X})\big).$$ Similarly, if $\phi$ does not involve $\mathbb{B}$, then the following is an axiom: $$\exists \mathbb{B} \forall \mathbb{X} \big( \mathbb{X} \in \mathbb{B} \iff \mathbb{X} \ \text{is a set} \wedge \phi(\mathbb{X})\big),$$ and so on for other letters. Letters other than $\mathbb{X}$ may also be used. \\
\end{itemize}

\begin{customcor}{1.4}
If $\phi$ is a class-theoretical formula not involving $\mathbb{A}$ or $\mathbb{B}$, and if $\forall \mathbb{X} \big( \mathbb{X} \in \mathbb{A} \iff \mathbb{X} \ \text{is a set} \wedge \phi(\mathbb{X})\big)$ and $\forall \mathbb{X} \big( \mathbb{X} \in \mathbb{B} \iff \mathbb{X} \ \text{is a set} \wedge \phi(\mathbb{X})\big)$, then $\mathbb{A} = \mathbb{B}$.  Similarly for formulas not involving either $\mathbb{A}$ or $\mathbb{C}, \ \mathbb{B}$ or $\mathbb{D}$, etc.
\end{customcor}
\begin{proof}
Suppose that $\mathbb{C}\in\mathbb{A}$; then $\mathbb{C}$ is a set and $\phi(\mathbb{C})$, thus $\mathbb{C}\in\mathbb{B}$. Now suppose that $\mathbb{D}\in\mathbb{B}$; then $\mathbb{D}$ is a set and $\phi(\mathbb{D})$, thus $\mathbb{D}\in\mathbb{A}$. Consequently we have that $\forall \mathbb{C}\big(\mathbb{C} \in \mathbb{A} \iff \mathbb{C} \in \mathbb{B}\big) \Rightarrow \mathbb{A}=\mathbb{B}$ by the extensionality axiom, completing the proof.\\
\end{proof}

\begin{customdefn}{1.5}
For any class-theoretical formula $\phi(\mathbb{X})$ not involving $\mathbb{A}$, let $\{\mathbb{X}: \phi(\mathbb{X}) \}$ be the unique class $\mathbb{A}$ such that $\forall\mathbb{X}\big(\mathbb{X}\in\mathbb{A} \iff \mathbb{X} \ \text{is a set} \ \wedge \phi(\mathbb{X})\big)$. Similarly if $\phi(\mathbb{X})$ does not involve $\mathbb{B}, \ \mathbb{C},$ and so on. This definition is justified by Corollary 1.4. Again, letters other than $\mathbb{X}$ may be used, and even lowercase letters.\\
\end{customdefn}

The symbolism introduced in Definition 1.5 captures the entire force of the class-building axioms; $\{\mathbb{X}: \phi(\mathbb{X}) \}$ may be read as "the class of all sets $\mathbb{X}$ such that $\phi(\mathbb{X})$". In what follows, the class-building axioms will always be used simply by defining classes equal to $\{\mathbb{X}: \phi(\mathbb{X}) \}$ for some $\phi(\mathbb{X})$. \\ Note that every new definition of the form $\mathbb{A}=\{\mathbb{X}:\phi(\mathbb{X})\}$ is thusly one of the class-building axioms, and can be used as an axiom when constructing a proof.  Moreover, it will not be necessary to discuss class-theoretical formulas generally any more -- all class-theoretical formulas from here on in will be explicit. \\

\begin{customdefn}{1.6}
$\mathbb{A} \subseteq \mathbb{B} \iff \forall \mathbb{C}\big(\mathbb{C}\in\mathbb{A} \Rightarrow \mathbb{C}\in\mathbb{B}\big)$. $\mathbb{A}\subseteq\mathbb{B}$ is read as "$\mathbb{A}$ {\bf is included in} $\mathbb{B}$" or "$\mathbb{A}$ {\bf is contained in} $\mathbb{B}$"; $\subseteq$ is called {\bf inclusion}. We say that $\mathbb{A}$ is a {\bf subclass} of $\mathbb{B}$ and $\mathbb{B}$ is a {\bf superclass} of $\mathbb{A}$; if $\mathbb{A}$ is a set, then $\mathbb{A}$ is a {\bf subset} of $\mathbb{B}$; if $\mathbb{B}$ is a set, $\mathbb{B}$ is a {\bf superset} of $\mathbb{A}$.\\
\end{customdefn}

\begin{customcor}{1.7}
$\mathbb{A}\subseteq\mathbb{B} \iff \forall x \big(x \in \mathbb{A} \Rightarrow x \in \mathbb{B}\big).$
\end{customcor}
\begin{proof}
Suppose that $\mathbb{A}\subseteq\mathbb{B}$. Let $x\in\mathbb{A}$; then $x\in\mathbb{B}$. Consequently, $\forall x \big(x\in\mathbb{A} \Rightarrow x\in\mathbb{B}\big)$. Now, suppose that $\forall x \big(x\in\mathbb{A} \Rightarrow x\in\mathbb{B}\big)$. Let $\mathbb{C}\in\mathbb{A}$; then $\mathbb{C}$ is a set, so $\mathbb{C}\in\mathbb{B}$. Consequently, $\mathbb{A}\subseteq\mathbb{B}$. This completes the proof. \\
\end{proof} 

\begin{customcor}{1.8}
$\mathbb{A}=\mathbb{B}\iff\mathbb{A}\subseteq\mathbb{B}$ and $\mathbb{B}\subseteq\mathbb{A}$.
\end{customcor}
\begin{proof}
This follows immedately from the definition of $\subseteq$ and {\bf A1}. \\
\end{proof}

\begin{customcor}{1.9}
If $\mathbb{A}$ is a set and $\mathbb{B}\subseteq\mathbb{A}$, then $\mathbb{B}$ is a set.
\end{customcor}
\begin{proof}
Suppose $\mathbb{A}$ is a set and $\mathbb{B}\subseteq\mathbb{A}$; by the power set axiom we have that $\exists c\big(\mathbb{X}\subseteq\mathbb{A}\Rightarrow\mathbb{X}\in c\big)$, thus $\mathbb{B}\subseteq\mathbb{A}\Rightarrow\mathbb{B}\in c\Rightarrow\mathbb{B}$ is a set.  This completes the proof. \\
\end{proof}

Corollary 1.8 amounts to a useful recasting of the Axiom of Extensionality, now using the symbols and language of inclusion that we have defined. Much of what is to come will be comparable to this process -- we have very abstract and powerful statements that we are using as our axioms, and we will define new symbols and lanuguage that forces these abstract notions to match up with more familiar intuitive notions. \\

\begin{itemize}
\item {\bf A3 -- Power Set Axiom} $\forall a \exists b \forall \mathbb{C} \big(\mathbb{C} \subseteq a \Rightarrow \mathbb{C} \in b\big).$ \\
\end{itemize}

This axiom says intiutively that if a class $\mathbb{A}$ is 'small enough' to be a set, then there is another class $\mathbb{B}$ that is 'small enough' to be a set which contains every subclass of $\mathbb{A}$. We already knew that there was a class with this property, namely $\{\mathbb{X}:\mathbb{X}\subseteq \mathbb{A}\}$ -- this axiom assures us that there is a set with this property, and this is important in what follows. A similar  remark applies to almost all of the other axioms introduced.\\

\begin{itemize}
\item {\bf A4 -- Pairing Axiom} $\forall a \forall b \exists c \big(a \in c \wedge b \in c\big).$ \\
\item {\bf A5 -- Union Axiom} $\forall a \exists b \forall \mathbb{C} \big(\mathbb{C} \in a \Rightarrow \mathbb{C} \subseteq b\big).$\\
\end{itemize}

\begin{customdefn}{1.10}
\begin{enumerate}
\item $\mathbb{V} = \{x:x=x\}$. $\mathbb{V}$ is called the {\bf universe}.
\item $0 = \{x:x\neq x\}$. $0$ is called the {\bf empty class}. \\
\end{enumerate}
\end{customdefn}

\begin{customcor}{1.11}
$\forall \mathbb{X} \big(\mathbb{X}\subseteq\mathbb{V} \wedge \mathbb{X}\notin 0\big).$
\end{customcor}
\begin{proof}
Let $\mathbb{A}\in\mathbb{X}$; then $\mathbb{A}$ is a set and $\mathbb{A}=\mathbb{A}$, so $\mathbb{A}\in\mathbb{V}$, thus $\mathbb{X}\subseteq\mathbb{V}$. $\mathbb{X}$ being arbitrary, we have that $\forall\mathbb{X}\big(\mathbb{X}\subseteq\mathbb{V}\big)$. Now suppose that $\mathbb{A}\in 0$; then $\mathbb{A}$ is a set and $\mathbb{A}\neq\mathbb{A}$, a contradiction. Consequently $\forall \mathbb{X} \big(\mathbb{X}\notin 0\big).$ This completes the proof. \\
\end{proof}

\begin{customdefn}{1.12}
$\mathbb{A} \cap \mathbb{B} = \{x:x\in\mathbb{A}\wedge x\in\mathbb{B}\}$. $\mathbb{A} \cap \mathbb{B}$ is called the {\bf intersection} of $\mathbb{A}$ and $\mathbb{B}$. \\
\end{customdefn}

\begin{itemize}
\item {\bf A6 -- Regularity Axiom} $\forall \mathbb{A}\big[ \mathbb{A} \neq 0 \Rightarrow \exists \mathbb{X} \big(\mathbb{X} \in \mathbb{A} \wedge \mathbb{A} \cap \mathbb{X} = 0\big)\big].$ \\
\end{itemize}

\begin{customcor}{1.13}
$\nexists \mathbb{A} \big(\mathbb{A}\in\mathbb{A}\big).$
\end{customcor}
\begin{proof}
Suppose that $\mathbb{A}\in\mathbb{A}$, so $\mathbb{A}$ is a set, and let $\mathbb{D}=\{x:x=\mathbb{A}\}$; then $\mathbb{D}\neq 0$ and $\forall \mathbb{X} \big(\mathbb{X}\in\mathbb{D} \Rightarrow \mathbb{A}\in\mathbb{X} \cap \mathbb{D}\Rightarrow \mathbb{X} \cap \mathbb{D}\neq 0\big) \Rightarrow \nexists \mathbb{X}\big(\mathbb{X}\in\mathbb{D} \wedge \mathbb{X}\cap\mathbb{D}=0\big)$, a contradiction by the regularity axiom. Consequently $\nexists \mathbb{A} \big(\mathbb{A}\in\mathbb{A}\big)$, completing the proof. \\
\end{proof}

\begin{customdefn}{1.14}
$\mathcal{S}\mathbb{A}=\{x: x\in\mathbb{A} \vee x=\mathbb{A}\}.$ $\mathcal{S}\mathbb{A}$ is called the {\bf successor} of $\mathbb{A}$. \\
\end{customdefn}

\begin{customcor}{1.15}
$\forall x \big(x\in\mathcal{S}a \iff x\in a \vee x=a\big).$
\end{customcor}
\begin{proof}
Suppose $x\in\mathcal{S}a$; then $x\in a$ or $x=a$ by definition. Now, suppose that $x\in a$ or $x=a$. If $x=a$, then $x\in\mathcal{S}a$; if $x\in a$, then $x\in \mathcal{S}a$. Consequently, $x\in\mathcal{S}a \iff x\in a \vee x=a$; this completes the proof. \\
\end{proof}

\begin{customthm}{1.16}
\begin{enumerate}
\item $\mathbb{A}$ is a proper class $\iff \mathbb{A}=\mathcal{S}\mathbb{A}$. 
\item $\mathbb{A}$ is a set $\iff \mathbb{A}\in\mathcal{S}\mathbb{A}$.
\end{enumerate}
\end{customthm}
\begin{proof}
Let $\mathbb{A}$ be a proper class, so $\mathbb{A}\notin\mathcal{S}\mathbb{A}$. Let $\mathbb{X}\in\mathbb{A}$; then $\mathbb{X}$ is a set, so $\mathbb{X}\in\mathcal{S}\mathbb{A}$ by definition. Now let $\mathbb{Y}\in\mathcal{S}\mathbb{A}$, so $\mathbb{Y}\in\mathbb{A}$ or $\mathbb{Y}=\mathbb{A}$. If $\mathbb{Y}=\mathbb{A}$ then $\mathbb{A}\in\mathcal{S}\mathbb{A}$, a contradiction; thus $\mathbb{Y}\in\mathbb{A}$. Consequently $\forall \mathbb{C}\big(\mathbb{C} \in \mathbb{A} \iff \mathbb{C} \in \mathcal{S}\mathbb{A}\big) \Rightarrow \mathbb{A}=\mathcal{S}\mathbb{A}$ by the axiom of extensionality. Now suppose that $\mathbb{A}=\mathcal{S}\mathbb{A}$.  If $\mathbb{A}$ is a set then $\mathbb{A}\in\mathcal{S}\mathbb{A}$ by definition, thus $\mathbb{A}=\mathcal{S}\mathbb{A}\Rightarrow\mathcal{S}\mathbb{A}\in\mathcal{S}\mathbb{A}$, a contradiction by Corollary 1.13; thus we conclude that $\mathbb{A}$ is not a set, making it a proper class. This completes the proof of {\it 1}. \\ For {\it 2}, suppose that $\mathbb{A}$ is a set; then $\mathbb{A}\in\mathcal{S}\mathbb{A}$ by definition. Now suppose that $\mathbb{A}\in\mathcal{S}\mathbb{A}$; then $\mathbb{A}$ is a set. This completes the proof. \\
\end{proof}

Thus we see that the logical notion of being a set vs being a proper class can be entirely recast in terms of classes and their successors, using equality and membership. \\

\begin{itemize}
\item {\bf A7 -- Infinity Axiom} $\exists a \big[0 \in a \wedge \forall \mathbb{X} \big( \mathbb{X} \in a \Rightarrow \mathcal{S}\mathbb{X} \in a \big)\big].$ \\
\end{itemize}

The Infinity Axiom derives its name from the fact that the set $a$ asserted to exist clearly has an infinite number of elements. Using the power set axiom we may obtain even larger sets from this axiom, however we are still unable to create sets that are as 'large' as we would like. For this purpose we must develop the notion of an ordered pair, a relation and a function. \\

\begin{customdefn}{1.17}
$\{\mathbb{A},\mathbb{B}\}=\{x:x=\mathbb{A}\vee x=\mathbb{B}\}$. $\{\mathbb{A},\mathbb{B}\}$ is called the {\bf doubleton $\mathbb{A},\mathbb{B}$} or the {\bf unordered pair $\mathbb{A},\mathbb{B}$}. \\
\end{customdefn}

\begin{customthm}{1.18}
$\{a,b\}$ is a set.
\end{customthm}
\begin{proof}
By the pairing axiom $\exists c \big(a\in c \wedge b\in c \big)\Rightarrow\{a,b\}\subseteq c$. By the power set axiom $\exists d \big(x\subseteq c \Rightarrow x \in d\big)$, so $\{a,b\}\subseteq c \Rightarrow \{a,b\}\in d\Rightarrow\{a,b\}$ is a set. This completes the proof. \\
\end{proof}

\begin{customcor}{1.19}
\begin{enumerate} 
\item $\mathbb{X}\in \{a,b\} \iff \mathbb{X}=a \vee \mathbb{X}=b.$ 
\item $a\in\{a,b\}$.
\item $b\in\{a,b\}$.
\end{enumerate}
\end{customcor}
\begin{proof}
Suppose that $\mathbb{X}\in\{a,b\}$; then $\mathbb{X}=a$ or $\mathbb{X}=b$ by definition. Suppose $\mathbb{X}=a$; then $\mathbb{X}\in\{a,b\}$. Suppose $\mathbb{X}=b$; then $\mathbb{X}\in\{a,b\}$. This proves {\it 1}. Since $a=a$ and $a$ is a set and $\{a,b\}=\{x:x=a\vee x=b\}$, $a\in\{a,b\}$ by the class building axioms, proving {\it 2}. The argument for {\it 3} is identical. This completes the proof. \\
\end{proof}

\begin{customdefn}{1.20}
$\{\mathbb{A}\}=\{\mathbb{A},\mathbb{A}\}$. $\{\mathbb{A}\}$ is called the {\bf singleton $\mathbb{A}$}. \\
\end{customdefn}

\begin{customcor}{1.21}
$\{a\}$ is a set. \\
\end{customcor}

\begin{customcor}{1.22}
$\mathbb{X}\in \{a\} \iff \mathbb{X}=a$. \\
\end{customcor}

\begin{customthm}{1.23}
$\{a,b\}=\{c,d\} \Rightarrow \big(a=c \wedge b=d\big) \vee \big(a=d \wedge b=c\big)$. \\
\end{customthm}
\begin{proof}
Assume that $\{a,b\}=\{c,d\}$. Then $a\in\{a,b\}\Rightarrow a\in\{c,d\} \Rightarrow \big(a=c \vee a=d\big)$. Similarly, $b\in\{a,b\}\Rightarrow b\in\{c,d\}\Rightarrow \big(b=c \vee b=d\big)$. If $\big(a=c \wedge b=d\big)$ or $\big(a=d \wedge b=c\big)$ we are satisfied. Suppose that $a=c$ and $b=c$; then $d\in\{c,d\}\Rightarrow d\in\{a,b\}\Rightarrow \big(d=a \vee d=b\big)\Rightarrow d=c$, thus $a=b=c=d$ and we are again satisfied. The argument if $a=d$ and $b=d$ is identical.  This completes the proof. \\
\end{proof}

\begin{customdefn}{1.24}
$(\mathbb{A},\mathbb{B})=\{\{\mathbb{A}\},\{\mathbb{A},\mathbb{B}\}\}$. $(\mathbb{A},\mathbb{B})$ is called the {\bf ordered pair $\mathbb{A},\mathbb{B}$} with {\bf first coordinate} $\mathbb{A}$ and {\bf second coordinate} $\mathbb{B}$. \\
\end{customdefn}

\begin{customcor}{1.25}
$(a,b)$ is a set.
\end{customcor}
\begin{proof}
By Theorem 1.16, $\{a,b\}$ is a set for any $a$ and $b$.  By Corollary 1.19, $\{a\}$ is a set for any $a$. Letting $c=\{a\}$ and $d=\{a,b\}$ we obtain $(a,b)=\{c,d\}$, thus $(a,b)$ is a set.  This completes the proof. \\
\end{proof}

\vspace{20mm}

\begin{customthm}{1.26}
$(a,b)=(c,d) \Rightarrow \big(a=c \wedge b=d\big)$.
\end{customthm}
\begin{proof}
Suppose $(a,b)=(c,d)$, so $\{\{a\},\{a,b\}\}=\{\{c\},\{c,d\}\}$.  By Theorem {\it 1.21} we have two cases to work with, namely {\bf (1)} $\{a\}=\{c\}$ and $\{a,b\}=\{c,d\}$ or {\bf (2)} $\{a\}=\{c,d\}$ and $\{a,b\}=\{c\}$.  \\ {\it Case 1} $\mathbb{X}\in\{a\}\Rightarrow\big(\mathbb{X}\in\{c\}\wedge\mathbb{X}=a\big)$, and $\mathbb{X}\in\{c\}\Rightarrow\mathbb{X}=c\Rightarrow a=c.$ We then have that $b\in\{a,b\}\Rightarrow b\in\{c,d\}\Rightarrow\big(b=c \vee b=d\big)$. If $b=d$ we are satisfied; suppose that $b=c$. Then $a=b$, so $\{a,b\}=\{a\}\Rightarrow\{c,d\}=\{a\}=\{c\}\Rightarrow c=d \Rightarrow b=d$, and we are again satisfied. \\ {\it Case 2} $\big( c\in\{c,d\}\wedge d\in\{c,d\}\big)\Rightarrow\big( c\in\{a\} \wedge d\in\{a\}\big) \Rightarrow c=a=d.$ Similarly, $b\in\{a,b\}\Rightarrow b\in\{c\}\Rightarrow b=c \Rightarrow b=d$, and we are satisfied.  This completes the proof. \\
\end{proof}

Theorem 1.26 presents the only class-theoretical formula that we really wished to establish using the ordered pair structure. \\

\begin{customdefn}{1.27}
\begin{enumerate}
\item $\mathbb{X}$ is a {\bf relation} iff $\forall \mathbb{A}\big[\mathbb{A}\in\mathbb{X} \Rightarrow \exists c \exists d \big(\mathbb{A}=(c,d)\big)\big].$
\item $dmn\mathbb{X}=\{x:\exists y \big[(x,y)\in\mathbb{X}\big]\}$. $dmn\mathbb{X}$ is called the {\bf domain} of $\mathbb{X}$.
\item $rng\mathbb{X}=\{y:\exists x \big[(x,y)\in\mathbb{X}\big]\}$. $rng\mathbb{X}$ is called the {\bf range} of $\mathbb{X}$.
\item $\mathbb{F}$ is a {\bf function} iff $\mathbb{F}$ is a relation and $\forall x\forall y\forall z \big[(x,y)\in\mathbb{F}\wedge (x,z)\in\mathbb{F} \Rightarrow y=z\big]$. \\
\end{enumerate}
\end{customdefn}

Thus we see that a relation is simply a class of ordered pairs, with a function being a class of ordered pairs such that no set $x$ is the first coordinate of two unique ordered pairs in the class. We are now prepared to succinctly formulate our last two axioms. \\

\begin{itemize}
\item {\bf A8 -- Axiom of Substitution} If $\mathbb{F}$ is a function and $dmn\mathbb{F}$ is a set, then $rng\mathbb{F}$ is a set. \\
\item{\bf A9 -- Relational Axiom of Choice} If $\mathbb{R}$ is a relation, there exists a function $\mathbb{F}$ such that $\mathbb{F}\subseteq\mathbb{R}$ and $dmn\mathbb{R}=dmn\mathbb{F}$. \\
\end{itemize}

This completes our discussion of the axioms of MK set theory, which will serve as the foundation for the remainder of this paper. A succinct list of the nine axiom schemata presented here is given below. \\

\subsubsection{Axiom List}

\begin{axioms}
\item $\forall \mathbb{A} \forall \mathbb{B} \big[\forall \mathbb{C}\big(\mathbb{C} \in \mathbb{A} \iff \mathbb{C} \in \mathbb{B}\big) \Rightarrow \mathbb{A}=\mathbb{B}\big].$
\item $\exists \mathbb{A} \forall \mathbb{X} \big( \mathbb{X} \in \mathbb{A} \iff \mathbb{X} \ \text{is a set} \wedge \phi(\mathbb{X})\big), \ \text{where $\mathbb{A}$ does not occur in $\phi(\mathbb{X})$}$.
\item $\forall a \exists b \forall \mathbb{C} \big(\mathbb{C} \subseteq a \Rightarrow \mathbb{C} \in b\big).$
\item $\forall a \forall b \exists c \big(a \in c \wedge b \in c\big).$
\item $\forall a \exists b \forall \mathbb{C} \big(\mathbb{C} \in a \Rightarrow \mathbb{C} \subseteq b\big).$
\item $\forall \mathbb{A}\big[ \mathbb{A} \neq 0 \Rightarrow \exists \mathbb{X} \big(\mathbb{X} \in \mathbb{A} \wedge \mathbb{A} \cap \mathbb{X} = 0\big)\big].$
\item $\exists a \big[0 \in a \wedge \forall \mathbb{X} \big( \mathbb{X} \in a \Rightarrow \mathcal{S}\mathbb{X} \in a \big)\big].$
\item $\big(\mathbb{F} \ \text{is a function} \ \wedge \ dmn\mathbb{F} \ \text{is a set}\big) \Rightarrow rng\mathbb{F} \ \text{is a set.}$
\item $\mathbb{R} \ \text{is a relation} \Rightarrow \exists \mathbb{F}\big(\mathbb{F} \ \text{is a function} \wedge \mathbb{F}\subseteq\mathbb{R}\wedge dmn\mathbb{R}=dmn\mathbb{F}\big).$ \\
\end{axioms}

Up until now, we have very carefully been explicit in defining any symbols we use prior to their usage, regardless of commonly understood interpretations -- this was to help give a feel for the proper development of mathematics from its foundations, and because we were formulating the axioms which will serve as a basis for the remainder of the paper and as such it behooved us to leave absolutely no logical ambiguity. To speed the development from here on in (and to avoid being overly pedantic), we will allow ourselves to use symbols with well-understood common meanings, like the binary union $\cup$, without first defining them explicitly using the class-buiding axioms. For symbols that are perhaps less generally used or are closely tied to undefined notions, like the infinitary union $\bigcup$ or proper inclusion $\subset$, we will still provide explicit definitions. \\	

It is worth noting explicitly that MK class theory as presented above is sufficiently powerful to predicate on proper classes, albeit in a much more restricted fashion than sets -- this is not the case in ZFC set theory. Accordingly, we have been and will continue to refer to MK as a class theory while we refer to ZFC as a set theory. Further, proper classes are the unique logical objects that lead to paradoxes when approached using naive thought, and set theories that are sufficiently rigorous to delineate between proper classes and sets provide a context in which all known mathematical paradoxes are resolved; for example, Russel's paradox reduces to the class of all sets that are not members of themselves, which is not a member of itself and not a set. \\

As a final note, much of the material presented above closely follows the first chapter of Introduction to Set Theory (1969) by J. Donald Monk [1] with slight alterations to proofs and the addition of Theorem 1.16, and I offer much thanks to him for his clear and concise exposition on the subject. \\

\section{The Ordinals $O_n$}

As children, most of us had some experience with an argument along the lines of: "I love you." "Well, I love you $\times 2$." ... "Well I love you $\times \infty$!" "Well I love you $\times (\infty + 1)$!" ... "Well I love you $\times (\infty^{\infty})$!" etc.  The ordinals show us, completely rigorously, that we were correct in our intuition as children that there is no good reason to stop at the first $\infty$-like number if it helps your argument not to. \\

\begin{thm}
\begin{enumerate}
\item $0$ is a set.
\item $\mathbb{V}$ is a proper class.
\end{enumerate}
\end{thm}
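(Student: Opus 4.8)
The plan is to treat the two parts separately, each being a short deduction from an axiom or an already-established corollary. For part \emph{1}, I would invoke the Infinity Axiom \textbf{A7}, which asserts the existence of a set $a$ satisfying $0 \in a \wedge \forall \mathbb{X}(\mathbb{X} \in a \Rightarrow \mathcal{S}\mathbb{X} \in a)$. Since $a$ is denoted by a lowercase letter it is a set, and from $0 \in a$ together with the definition of ``set'' (there is a $\mathbb{B}$ with $0 \in \mathbb{B}$) we conclude immediately that $0$ is a set. No other axiom is needed; note in particular that the Pairing Axiom \textbf{A4} cannot be applied directly to $0$, since its hypotheses presuppose that its inputs are already sets, so an existence axiom like Infinity really is doing essential work here.

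For part \emph{2}, I would argue by contradiction. Suppose $\mathbb{V}$ is a set. Recall $\mathbb{V} = \{x : x = x\}$, so by the class-building axioms any set $\mathbb{X}$ with $\mathbb{X} = \mathbb{X}$ satisfies $\mathbb{X} \in \mathbb{V}$. Applying this with $\mathbb{X} = \mathbb{V}$ — which we have assumed to be a set and which certainly satisfies $\mathbb{V} = \mathbb{V}$ — yields $\mathbb{V} \in \mathbb{V}$. This contradicts Corollary 1.13, which states $\nexists \mathbb{A}(\mathbb{A} \in \mathbb{A})$. Hence $\mathbb{V}$ is not a set, i.e. $\mathbb{V}$ is a proper class. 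A slightly longer alternative, if one prefers not to appeal to $\mathbb{V} \in \mathbb{V}$ directly: by Corollary 1.11 every class $\mathbb{X}$ satisfies $\mathbb{X} \subseteq \mathbb{V}$, so if $\mathbb{V}$ were a set then Corollary 1.9 would force \emph{every} class to be a set, and the usual Russell-style argument would then produce a contradiction; but the first route is shorter and self-contained, so that is the one I would write up.

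I do not anticipate a real obstacle: both parts are essentially one-line consequences of the machinery already assembled. The only points worth stating carefully are (i) that ``$0$ is a set'' genuinely needs an existence axiom rather than following formally from the class-building schema alone, with Infinity the natural and minimal choice, and (ii) that ``$\mathbb{V}$ is a proper class'' is precisely the MK-theoretic resolution of the naive paradox of a universal set, here made rigorous via the Regularity Axiom through Corollary 1.13.
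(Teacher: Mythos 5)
Your proposal matches the paper's proof essentially verbatim: part \emph{1} is read off from the Infinity Axiom \textbf{A7} (which exhibits a set containing $0$), and part \emph{2} derives $\mathbb{V}\in\mathbb{V}$ from the class-building axioms under the assumption that $\mathbb{V}$ is a set, contradicting Corollary 1.13. Your added remarks (why an existence axiom is genuinely needed for part \emph{1}, and the alternative Russell-style route) are sound but do not change the fact that this is the same argument the paper gives.
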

\begin{proof}
{\it 1} follows immedately from {\bf A7}. For {\it 2}, suppose $\mathbb{V}$ is a set. Then $\mathbb{V}=\mathbb{V}$ and $\mathbb{V}$ is a set, thus $\mathbb{V}\in\mathbb{V}$ by the class building axioms, a contradiction by Corollary 1.13. We thusly conclude that $\mathbb{V}$ is not a set, completing the proof.\\
\end{proof}

\begin{customdefn}{2.2}
\begin{enumerate}
\item $\mathbb{A}$ is {\bf membership transitive} iff for all $x$ and $y$, $x\in y\in\mathbb{A} \Rightarrow x\in\mathbb{A}$.
\item $\mathbb{A}$ is an {\bf ordinal} or an {\bf ordering number} iff $\mathbb{A}$ is membership transitive and every member of $\mathbb{A}$ is membership transitive.
\item $O_n=\{x:x \ \text{is an ordinal}\}.$ $O_n$ is called the {\bf Ordinals}, or the {\bf ordering numbers}. Lowercase greek letters $\alpha, \beta, \gamma, \dots$ will be used to denote members of $O_n$ unless otherwise stated.\\
\end{enumerate}
\end{customdefn}

Note that the definition of an ordinal is \underline{very} closely tied to the undefined notions in MK class theory. \\

We will see in the next few pages that this notion of 'membership transitivity' for a class and all of its members provides a very natural context in which we can define objects that behave like the natural numbers, as well as the class of all natural numbers. Further, in this context the class of all natural numbers is also an ordinal, specifically the first infinite ordinal that we encounter in our ordering when beginning at $0$ and moving upwards. \\

\begin{customthm}{2.3}
$0\in O_n$.
\end{customthm}
\begin{proof}
By virtue of the vacuous implication, $0$ is membership transitive. Since $0$ has no members, all members of $0$ are also membership transitive by virtue of the vacuous implication; thus we see that $0$ is an ordinal. By Theorem {\it 2.1} $0$ is also a set, thus $0\in O_n$ by {\bf A2} and Definition {\it 2.2}. This completes the proof. \\
\end{proof}

\begin{customthm}{2.4}
If $\alpha$ is an ordinal, then $\mathcal{S}\alpha$ is an ordinal.
\end{customthm}
\begin{proof}
Suppose that $\alpha$ is an ordinal, so $\alpha$ is membership transitive and all of its members  are as well. Consequently, by the definition of $\mathcal{S}\alpha$ we have that all members of $\mathcal{S}\alpha$ are membership transitive, so we only need to show that $\mathcal{S}\alpha$ is membership transitive. Let $x\in y\in\mathcal{S}\alpha$, so $y=\alpha$ or $y\in \alpha$. Suppose $y=\alpha$, so $\mathcal{S}\alpha=\mathcal{S}y$, thus $x\in y \Rightarrow x \in \mathcal{S}y\Rightarrow x \in \mathcal{S}\alpha$. Now suppose $y\in \alpha$, so $x \in \alpha$ since $\alpha$ is membership transitive, thus $x\in\mathcal{S}\alpha$. Thusly we have that $x\in y \in \mathcal{S}\alpha \Rightarrow x\in\mathcal{S}\alpha$, completing the proof. \\
\end{proof}

Thus we immedately see that there are a profusion of ordinals, $0,\mathcal{S}0,\mathcal{S}\mathcal{S}0,\dots$. The next definition and theorem will allow us to obtain even more members of $O_n$. \\

\begin{customdefn}{2.5}
\begin{enumerate}
\item $\bigcup\mathbb{A} = \{x:\exists y\big[x\in y \in \mathbb{A}\big]\}$. $\bigcup\mathbb{A}$ is called the {\bf union} of $\mathbb{A}$.
\item $\bigcap\mathbb{A} = \{x:\forall y\big[y\in\mathbb{A}\Rightarrow x\in y\big]\}$. $\bigcap\mathbb{A}$ is called the {\bf intersection} of $\mathbb{A}$.  \\
\end{enumerate}
\end{customdefn}

\begin{customthm}{2.6}
If $\mathbb{A}\subseteq O_n$, then $\bigcup \mathbb{A}$ is an ordinal.
\end{customthm}
\begin{proof}
Suppose that $\mathbb{A}\subseteq O_n$, and let $x\in y\in \bigcup\mathbb{A}$. Then $y\in \alpha$ for some $\alpha\in \mathbb{A}$, and since $\alpha$ is an ordinal $x\in y\in \alpha \Rightarrow x\in \alpha \Rightarrow x\in\bigcup\mathbb{A}$, thus we see that $\bigcup\mathbb{A}$ is membership transitive. Now, suppose that $y \in \bigcup\mathbb{A}$. Since $y\in \bigcup\mathbb{A}$, $y\in \alpha$ for some $\alpha\in\mathbb{A}$ and consequently $y$ is membership transitive. This completes the proof. \\
\end{proof}

We have now proven the existence of all ordinals that are sets.  We will define a structure among them shortly, but first we must demonstrate the existence of the unique proper class ordinal. \\

\begin{customthm}{2.7}
If $\mathbb{A}$ is an ordinal, then $\mathbb{A}\subseteq O_n$.
\end{customthm}
\begin{proof}
Suppose $\mathbb{A}$ is an ordinal, so we know that all of its members are membership transitive -- we wish to show that the members of $\bigcup\mathbb{A}$ are also membership transitive. Let $x\in y\in \mathbb{A}$, so $x\in\bigcup\mathbb{A}$.  Since $\mathbb{A}$ is an ordinal and thusly membership transitive we have $x\in y\in \mathbb{A} \Rightarrow x\in \mathbb{A} \Rightarrow x$ is membership transitive. This completes the proof. \\
\end{proof}

\begin{customthm}{2.8}
$O_n$ is an ordinal.
\end{customthm}
\begin{proof}
Since all members of $O_n$ are ordinals by definition, they are all membership transitive; we wish to show that $O_n$ is also membership transitive.  Suppose that $x\in y\in O_n$. Then $y\in O_n \Rightarrow y\subseteq O_n$ by the previous theorem, so $x\in O_n$. Thusly $O_n$ is membership transitive, making it an ordinal.  This completes the proof. \\
\end{proof}

\begin{customthm}{2.9}
$O_n$ is a proper class.
\end{customthm}
\begin{proof}
If $O_n$ were a set, we would have $O_n \in O_n$ by Definition 2.2 and Theorem 2.8, a contradiction by Corollary 1.13. Thusly we conclude that $O_n$ is not a set, completing the proof. \\
\end{proof}

As a result of the previous three theorems we see that $O_n$ is a proper class ordinal. Theorem 2.11 following the next lemma is useful for much of the remaining paper, essentially establishing that membership forms a total ordering on $O_n$ -- another immediate consequence of it is the uniqueness of $O_n$ as a proper class ordinal. \\

\begin{customlem}{2.10}
If $\mathbb{A}$ is an ordinal, then $0\in\mathbb{A}$ or $0=\mathbb{A}$.
\end{customlem}
\begin{proof}
Let $\mathbb{A}$ be an ordinal; if $\mathbb{A}=0$, we are satisfied. Suppose $\mathbb{A}\neq 0$, and by the regularity axiom choose $a\in\mathbb{A}$ such that $a\cap\mathbb{A}=0$. If $a=0$, we are satisfied; suppose that $a\neq0$, so $\exists x\big(x\in a\big)$. But $\mathbb{A}$ is an ordinal and consequently membership transitive, thus $x\in a\in \mathbb{A} \Rightarrow x\in\mathbb{A}\Rightarrow a\cap\mathbb{A}=\{x\}\neq 0$, a contradiction. Thusly we conclude that $a=0$, so $0\in\mathbb{A}$. This completes the proof.  \\
\end{proof}

\begin{customthm}{2.11}
If $x$ and $y$ are ordinals, then $x=y$ or $x\in y$ or $y\in x$.
\end{customthm}
\begin{proof}
Let $\mathbb{A}=\{x:x\in O_n \wedge \exists y\big(y\in O_n \wedge y\neq x \wedge y\notin x \wedge x\notin y\big)\}$. We wish to show that $\mathbb{A}=0$.  Suppose that $A\neq 0$, and using the regularity axiom let $a\in\mathbb{A}$ such that $a\cap\mathbb{A}=0$. Since $a\in\mathbb{A}$, we have that $\mathbb{B}=\{y:y\in O_n \wedge y\neq a \wedge y\notin a \wedge a\notin y\}\neq 0$. Again using the regularity axiom, let $b\in\mathbb{B}$ such that $b\cap\mathbb{B}$=0. Note that $b\in\mathbb{B}$ gives us $b\neq a$, $b\notin a$ and $a\notin b$.  \\ Suppose $b=0$; then since $a$ is an ordinal we have $b\in a$ or $b=a$ by Theorem 2.10, a contradiction. Consequently, $b\neq 0$.  Let $x\in b$, thus $x=a$ or $a\in x$ or $x\in a$ since $b\cap\mathbb{B}=0$. If $x=a$ then $a\in b$, a contradiction. If $a\in x$, then since $b$ is an ordinal we have $a\in x\in b \Rightarrow a\in b$, another contradiction.  Thusly we conclude that $x\in a$, and consequently that $b\subseteq a$. \\ Suppose $a=0$; then since $b$ is an ordinal we have $a\in b$ or $a=b$ by Theorem 2.10, a contradiction. Consequently, $a\neq 0$. Let $y\in a$, so $y$ is an ordinal by Theorem 2.7, and suppose that $y\neq b$ and $y\notin b$ and $b\notin y$. Then $y\in\mathbb{A}$, thus $a\cap\mathbb{A}=\{y\}\neq 0$, a contradiction; consequently we have that $y=b$ or $y\in b$ or $b\in y$. If $y\in b$ then $a\subseteq b$ and consequently $a=b$, a contradiction. If $y=b$ then $b\in a$, a contradiction. If $b\in y$, then since $a$ is an ordinal we have that $b\in y \in a \Rightarrow b\in a$, another contradiction. We thusly conclude that $\mathbb{A}=0$, completing the proof. \\
\end{proof}

The previous method of proof, while somewhat more involved than the other proofs presented thus far, is actually an incredibly versatile approach. We define a class $\mathbb{A}$ full of sets $x$ that are also in a particular class $\mathbb{B}$ (in the above case $\mathbb{B}=O_n$), and require that the sets $x$ satisfy some class-theoretical formula $\phi(x)$ as a result of being in $\mathbb{A}$. This class-theoretical formula $\phi(x)$ is the negation of a class-theoretical formula $\psi(x)$ $\big(\phi(x)\equiv\neg\psi(x)\big)$, and we wish to establish that $\psi(x)$ is true for all $x\in\mathbb{B}$. We do so by proving that $\mathbb{A}$ is empty, usually by observing that we arrive at a contradiction if $\mathbb{A}\neq 0$, and since $\mathbb{A}$ is the subclass of $\mathbb{B}$ where $\phi(x)$ is true this establishes that $\neg\phi(x)\equiv\psi(x)$ is true for all $x\in\mathbb{B}$ as desired. \\ It is worth noting that this method is only tractable to prove class-theoretical formulas about {\it sets}, or classes that are already known to be in some other class where a predicate can arbitrarily hold or not.  The above theorem does not, a priori, apply to ordinals that are classes (which was implicit in the lowercase letters used in the statement of the theorem). This is a good first example of what we mean when we say that proper classes are more logically restricted objects than sets -- we can't arbitrarily check predicates on them in the above fashion. Luckily for us, the Ordinals have sufficient structure for us to circumvent this issue. \\

\begin{customthm}{2.12}
$O_n$ is the unique proper class ordinal.
\end{customthm}
\begin{proof}
Let $\mathbb{A}$ be an ordinal that is a proper class; by Theorem 2.7 we have $\mathbb{A}\subseteq O_n$.  If $\mathbb{A}=O_n$ we are satisfied; suppose $O_n\sim\mathbb{A}\neq0$, and let $x\in O_n\sim\mathbb{A}$ and $z\in\mathbb{A}$, so $x$ is an ordinal by definition and $z$ is an ordinal since $\mathbb{A}\subseteq O_n$. We then have that $z\in x$ or $z=x$ or $x\in z$ by Theorem 2.11, since $x$ and $z$ are sets. If $z\in x$ then $z\in x\in O_n\sim\mathbb{A}\Rightarrow z\in O_n\sim\mathbb{A}$ by Theorem 2.8, a contradiction since $z\in\mathbb{A}$. If $z=x$ then $z\in O_n\sim\mathbb{A}$ again, a contradiction. If $x\in z$ then $x\in z \in \mathbb{A} \Rightarrow x\in\mathbb{A}$ since $\mathbb{A}$ is an ordinal, a contradiction since $x\in O_n\sim\mathbb{A}$. We thusly conclude that $O_n\sim\mathbb{A}=0$, so $\mathbb{A}=O_n$, completing the proof. \\
\end{proof}

We have now demonstrated the existence of all types of ordinals that exist -- $0$, successors of ordinals, infinitary unions of classes full of sets that satisfy a class-theoretical formula making them ordinals, and $O_n$. {\it Cardinals} are special types of ordinals usually defined in terms of equipotence and power classes, however they are immaterial to our development at this juncture. \\

Note that by Theorem 1.16 and Theorem 2.9 we have that $\mathcal{S}O_n=O_n$ and $\mathbb{A}\in O_n\Rightarrow \mathcal{S}\mathbb{A}\neq\mathbb{A}$, and by Theorem 2.12 we have that any ordinal $\mathbb{A}$ such that $\mathbb{A}\neq O_n$ satisfies $\mathbb{A}\in O_n$, thus $O_n$ represents the only logically natural point at which we are unable to create new unique ordinals by looking at the object formed when taking the successor of a class that is an ordinal.  There is one last interesting fact regarding $O_n$ in this respect that can be established quickly using the next lemma. \\

\begin{customlem}{2.13}
$\mathbb{A}$ is a set $\iff$ $\mathcal{S}\mathbb{A}$ is a set.
\end{customlem}
\begin{proof}
Suppose $\mathbb{A}$ is a set. By the pairing axiom we have that $\exists b\big(\mathbb{A}\in b\big)$, so $\{\mathbb{A}\}\subseteq b$ and consequently $\{\mathbb{A}\}$ is a set by Corollary 1.9. Again by the pairing axiom we then have that $\exists c\big(\mathbb{A}\in c \wedge \{\mathbb{A}\}\in c\big)$, and then by the union axiom we have that $\exists d\big(x\in c \Rightarrow x\subseteq d\big)$. Suppose that $\mathbb{X}\in\mathcal{S}\mathbb{A}$, so $\mathbb{X}=\mathbb{A}$ or $\mathbb{X}\in\mathbb{A}$. If $\mathbb{X}=\mathbb{A}$ then $\mathbb{X}\in d$, and if $\mathbb{X}\in\mathbb{A}$ then $\mathbb{X}\in d$, thus $\mathbb{X}\in\mathcal{S}\mathbb{A}\Rightarrow\mathbb{X}\in d\Rightarrow\mathcal{S}\mathbb{A}\subseteq d$, and consequently $\mathcal{S}\mathbb{A}$ is a set by Corollary 1.9. \\ Now suppose that $\mathcal{S}\mathbb{A}$ is a set. If $\mathbb{A}$ is a proper class then $\mathbb{A}=\mathcal{S}\mathbb{A}$ by Theorem 1.16, thus $\mathcal{S}\mathbb{A}$ is a proper class as well, a contradiction.  We thusly conclude that $\mathbb{A}$ is a set, completing the proof. \\
\end{proof} 

Note that we have only used the axioms of MK class theory and theorems from Section 1 to prove the previous lemma; it is independent of the structure of the ordinals. \\

\begin{customcor}{2.14}
$\nexists x\big(\mathcal{S}x=O_n\big)$.
\end{customcor}
\begin{proof}
Suppose that $\mathcal{S}x=O_n$; then $x$ is a set $\Rightarrow\mathcal{S}x$ is a set by Lemma 2.13, thus $O_n$ is a set, a contradiction by Theorem 2.9. Consequently $\mathcal{S}x\neq O_n$, so we have that $x$ is a set $\Rightarrow \mathcal{S}x\neq O_n$ and we thusly conclude that $\nexists x\big(\mathcal{S}x=O_n\big)$, completing the proof. \\
\end{proof}

\begin{customcor}{2.15}
There is no ordinal $\mathbb{A}\neq O_n$ such that $\mathcal{S}\mathbb{A}=O_n$.
\end{customcor}
\begin{proof}
Let $\mathbb{A}$ be an ordinal and suppose that $\mathbb{A}\neq O_n$; then $\mathbb{A}$ is a set by Theorem 2.12, thus $\mathcal{S}\mathbb{A}\neq O_n$ by Corollary 2.14.  This completes the proof. \\
\end{proof}

We thusly see that it is impossible to obtain $O_n$ as the successor of any other ordinal, and that this fact is a nigh direct result of the locigal context we are working in. Once we have defined a satisfying notion of finite vs. infinite ordinals we will see that this directly leads to $O_n$ being the most primitive and largest infinity, primitive meaning that it's 'infinite' properties are more closely tied to the undefined notions and axioms of MK class theory than any other infinite ordinal, and largest meaning that it would be ordered above all other infinite ordinals in our ordering if it could be legitimately placed in an ordering (which it can't be as a proper class). \\

The next few definitions and theorems will establish the remaining useful facts regarding ordinals that we wish to express in this somewhat abstract symbolic form, and we will then define some new symbols that are hopefully more familiar to our intuition with numbers, like $\leq$, and finally gather together the last of the properties we care about pertaining to $O_n$ in a more familiar looking symbolism. \\

\begin{customdefn}{2.16}
$\mathbb{A}\subset\mathbb{B}\iff\mathbb{A}\subseteq\mathbb{B}$ and $\mathbb{A}\neq\mathbb{B}$. $\subset$ is called {\bf proper inclusion}. Note that we could have required that $\mathbb{A}\subseteq\mathbb{B}$ and $\exists x\big(x\in\mathbb{B}\wedge x\notin\mathbb{A}\big)$, but this definition is equivalent and generally more elegant in proofs. \\
\end{customdefn}

\begin{customthm}{2.17}
If $\mathbb{A}$ and $\mathbb{B}$ are ordinals, then $\mathbb{A}\subset\mathbb{B}\iff\mathbb{A}\in\mathbb{B}$.
\end{customthm}
\begin{proof}
Let $\mathbb{A}$ and $\mathbb{B}$ be ordinals, and suppose that $\mathbb{A}\subset\mathbb{B}$. If $\mathbb{A}=O_n$ then $\mathbb{A}\subset\mathbb{B}\Rightarrow\mathbb{B}$ is a proper class and $\mathbb{B}\neq O_n$, a contradiction by Theorem 2.12. We thusly conclude that $\mathbb{A}\neq O_n$, so $\mathbb{A}\in O_n$ again by Theorem 2.12. Now suppose that $\mathbb{B}=O_n$; then $\mathbb{A}\in\mathbb{B}$ and we are satisfied. \\  Now suppose that $\mathbb{B}\neq O_n$, so $\mathbb{B}$ is a set, and thusly by Theorem 2.11 we have that $\mathbb{A}\in\mathbb{B}$ or $\mathbb{A}=\mathbb{B}$ or $\mathbb{B}\in\mathbb{A}$. If $\mathbb{A}=\mathbb{B}$ we have a contradiction since $\mathbb{A}\subset\mathbb{B}$ by assumption. If $\mathbb{B}\in\mathbb{A}$ then $\mathbb{A}\subset\mathbb{B}\Rightarrow\mathbb{B}\in\mathbb{B}$, a contradiction by Corollary 1.13. Thusly we conclude that $\mathbb{A}\in\mathbb{B}$, completing half of the proof. \\ Now suppose that $\mathbb{A}\in\mathbb{B}$, so $x\in \mathbb{A}\in\mathbb{B}\Rightarrow x\in\mathbb{B}$ since $\mathbb{B}$ is an ordinal. Since $x$ was arbitrary in $\mathbb{A}$, we have\ $\mathbb{A}\subseteq\mathbb{B}$. If $\mathbb{A}=\mathbb{B}$ then $\mathbb{B}\in\mathbb{B}$, a contradiction by Corollary 1.13.  Thusly we conclude that $\mathbb{A}\subset\mathbb{B}$, completing the proof.\\
\end{proof}

\begin{customthm}{2.18}
If $\mathbb{A}$ is a non-empty class of ordinals, then $\bigcap\mathbb{A}$ is an ordinal and $\bigcap\mathbb{A}\in\mathbb{A}$.
\end{customthm}
\begin{proof}
Let $\mathbb{A}\subseteq O_n$ with $\mathbb{A}\neq 0$. If $\bigcap\mathbb{A}=0$ then $\bigcap\mathbb{A}$ is an ordinal by Theorem 2.3 and $\mathbb{A}\neq0\Rightarrow\bigcap\mathbb{A}\in\mathbb{A}$ by Theorem 2.11, thus we are satisfied. Suppose $\bigcap\mathbb{A}\neq0$, and let $x\in y\in \bigcap\mathbb{A}$, so $y\in a$ for all $a\in\mathbb{A}$. Let $z\in\mathbb{A}$, so $z$ is an ordinal and $y\in z$, thus $x\in y\in z\Rightarrow x\in z$, and since $z$ was arbitrary in $\mathbb{A}$ we have $x\in \bigcap\mathbb{A}$, thus $\mathbb{A}$ is membership transitive. Further, by Theorem 2.8, $y\in z\Rightarrow y$ is an ordinal, and consequently $x\in y\Rightarrow x$ is an ordinal, thus $x$ is membership transitive. Since $y$ was arbitrary in $\bigcap\mathbb{A}$ and $x$ was arbitrary in $y$, this completes the proof that $\bigcap\mathbb{A}$ is an ordinal. \\
By definition we have that $x\in\mathbb{A}\Rightarrow\bigcap\mathbb{A}\subseteq x$. If $\bigcap\mathbb{A}=x$ then $\bigcap\mathbb{A}\in\mathbb{A}$ and we are satisfied. If $\bigcap\mathbb{A}\subset x$, then since $\bigcap\mathbb{A}$ and $x$ are ordinals $\bigcap\mathbb{A}\in x$ by Theorem 2.15, and since $x$ was arbitrary in $\mathbb{A}$ we have that $\bigcap\mathbb{A}\in\bigcap\mathbb{A}$, a contradiction by Corollary 1.13. Thusly we conclude that $\bigcap\mathbb{A}\in\mathbb{A}$, completing the proof. \\
\end{proof}

Throughout the remainder of this section, we will state theorems on the properties of ordinals that are known to be true without proofs included -- for a proof of any of the following theorems in this section, see D. Monk [1]. \\

\begin{customthm}{2.19}
Let $\mathbb{A}$ and $\mathbb{B}$ be ordinals. Then
\begin{enumerate}
\item If $\mathbb{A}\in\mathbb{B}$, then $\mathcal{S}\mathbb{A}=\mathbb{B}$ or $\mathcal{S}\mathbb{A}\in\mathbb{B}$.
\item If $\mathbb{C}\subseteq\mathbb{A}$, then $\bigcup\mathbb{C}=\mathbb{A}$ or $\bigcup\mathbb{C}\in\mathbb{A}$. \\
\end{enumerate}
\end{customthm}

We now collect together the remaining necessary symbolic definitions regarding relations we need prior to our discussion of ordering and ordinal arithmetic. \\

\begin{customdefn}{2.20}
Let $\mathbb{R}$ be a relation.
\begin{enumerate}
\item We write $x\mathbb{R}y$ iff $(x,y)\in\mathbb{R}$. Similarly, we write $x\mathbb{R}y\mathbb{R}z$ iff $(x,y)\in\mathbb{R}$ and $(y,z)\in\mathbb{R}$.
\item $\mathbb{R}^{-1}=\{x:x=(a,b)\wedge b\mathbb{R}a\}$. $\mathbb{R}^{-1}$ is called the {\bf inverse} of $\mathbb{R}$.
\item $\mathbb{R}\upharpoonleft\mathbb{X}=\{x:\exists(a,b)\big[x=(a,b)\wedge a\mathbb{R}b\wedge a\in\mathbb{X}\big]\}$. $\mathbb{R}\upharpoonleft\mathbb{X}$ is called the {\bf domain restriction of $\mathbb{R}$ to $\mathbb{X}$}.
\item $\mathbb{R}\upharpoonright\mathbb{Y}=\{x:\exists(a,b)\big[x=(a,b)\wedge a\mathbb{R}b\wedge b\in\mathbb{Y}\big]\}$. $\mathbb{R}\upharpoonright\mathbb{Y}$ is called the {\bf range restriction of $\mathbb{R}$ to $\mathbb{Y}$}.
\item $\mathbb{R}\upharpoonleft\upharpoonright\mathbb{Z}=\{x:\exists(a,b)\big[x=(a,b)\wedge a\mathbb{R}b\wedge a\in\mathbb{Z}\wedge b\in\mathbb{Z}\big]\}$. $\mathbb{R}\upharpoonleft\upharpoonright\mathbb{Z}$ is called the {\bf restriction of $\mathbb{R}$ to $\mathbb{Z}$}.
\item $^\mathbb{A}\mathbb{B}=\{f:f\ \text{is a function} \ \wedge \ dmnf=\mathbb{A} \ \wedge \ rngf\subseteq\mathbb{B}\}$. $^\mathbb{A}\mathbb{B}$ is called the {\bf function space from $\mathbb{A}$ into $\mathbb{B}$}.
\item Let $\mathbb{A}$ be a relation. $\mathbb{R}|\mathbb{A}=\{x:\exists a\exists b \exists c\big[a\mathbb{R}b \wedge b\mathbb{A}c\wedge x=(a,c)\big]\}$. $\mathbb{R}|\mathbb{A}$ is called the {\bf relative product of $\mathbb{R}$ with respect to $\mathbb{A}$}. If $(x,z)\in\mathbb{R}|\mathbb{A}$ we will write $x\mathbb{R}\mathbb{A}z$. Note that, in general, $\mathbb{R}|\mathbb{A}\neq(\mathbb{A}|\mathbb{R})^{-1}$.
\item $\mathbb{R}\circ\mathbb{A}=\mathbb{A}|\mathbb{R}$. $\mathbb{R}\circ\mathbb{A}$ is called the {\bf composition of $\mathbb{R}$ with $\mathbb{A}$.}
\item $Fld\mathbb{R}=dmn\mathbb{R}\cup rng\mathbb{R}$. $Fld\mathbb{R}$ is called the {\bf field} of $\mathbb{R}$. $\mathbb{R}$ is said to be {\bf on $\mathbb{A}$} iff $Fld\mathbb{R}=\mathbb{A}$. \\
\end{enumerate}
\end{customdefn}

We will not discuss any of the above definitions in ther own right beyond this point, however the notions introduced are hopefully familiar and intuitive. \\

\begin{customdefn}{2.21} Let $\mathbb{R}$ be a relation.
\begin{enumerate}
\item $\mathbb{R}$ is {\bf symmetric} $\iff \forall x \forall y\big[x\mathbb{R}y\Rightarrow y\mathbb{R}x\big]$.
\item $\mathbb{R}$ is {\bf transitive} $\iff \forall x\forall y\forall z\big[x\mathbb{R}y\mathbb{R}z\Rightarrow x\mathbb{R}z\big]$.
\item $\mathbb{R}$ is an {\bf equivalence relation} $\iff \mathbb{R}$ is transitive and symmetric.
\item $x/\mathbb{R}=\{y:x\mathbb{R}y\}.$  If $\mathbb{R}$ is an equivalence relation and $x\in Fld{\mathbb{R}}$, then $x/\mathbb{R}$ is called an {\bf equivalence class}, and $x$ is a {\bf representative} of $x/\mathbb{R}$.
\item $\mathbb{R}$ is {\bf reflexive on $\mathbb{A}$} $\iff \forall x\big[x\in\mathbb{A}\Rightarrow x\mathbb{R}x\big]$.
\item $\mathbb{R}$ is {\bf antisymmetric} $\iff \forall x\forall y\big[x\mathbb{R}y\mathbb{R}x\Rightarrow x=y\big]$.
\item $\mathbb{R}$ is a {\bf partial ordering} $\iff \mathbb{R}$ is reflexive on $Fld\mathbb{R}$ and $\mathbb{R}$ is transitive and $\mathbb{R}$ is antisymmetric.
\item $\mathbb{A}$ is {\bf partially ordered by $\mathbb{R}$} $\iff (\mathbb{A}\times\mathbb{A})\cap\mathbb{R}=\mathbb{R}'$ is a partial ordering and $Fld\mathbb{R}'=\mathbb{A}$.
\item $\mathbb{R}$ is a {\bf total ordering} $\iff \mathbb{R}$ is a partial ordering and \\ $\forall x\forall y\big[\big(x\in Fld\mathbb{R} \wedge y\in Fld\mathbb{R}\big)\Rightarrow \big(x\mathbb{R}y\vee y\mathbb{R}x\big)\big]$.
\item $\mathbb{R}$ is {\bf well-founded} $\iff \mathbb{A}\subseteq\mathbb{R}$ and $\mathbb{A}\neq 0$ imply that $\exists x\big(\exists y\big[ (x,y)\in\mathbb{A} \wedge \nexists z\big[(z,x)\in\mathbb{A}\big]\big]\big)$.
\item $\mathbb{R}$ is a {\bf well-ordering} $\iff \mathbb{R}$ is a total ordering and $\mathbb{R}$ is well-founded. \\
\end{enumerate}
\end{customdefn}

Although it may seem odd that we did not require an equivalence relation to be reflexive, note that if $\mathbb{R}$ is an equivalence relation on $\mathbb{A}$ then $\mathbb{R}$ is reflexive on $\mathbb{A}$ according to Definition 2.20, thus our intuition regarding equivalence relations is satisfied. We have now provided sufficient context to define a natural well-ordering on $O_n$ -- we first briefly expand our symbolic ability to define classes full of certain sets. \\
 
\begin{customdefn}{2.22}
\begin{enumerate}
\item Any capital hollow letter is a {\bf class-theoretical term}, as are $\mathbb{V}$ and $0$.
\item If $\sigma$ and $\tau$ are class-theoretical terms, then so are $\mathcal{S}\sigma,\{\sigma\},\{\sigma,\tau\},(\sigma,\tau),dmn\sigma,rng\sigma,Fld\sigma, \sigma\upharpoonright\tau, \\ \sigma|\tau,\sigma^{-1},\sigma\cap\tau,\sigma\cup\tau,\sigma\sim\tau,\sigma\times\tau,\sigma ^* \tau,\sigma(\tau),\sigma\tau,\sigma_{\tau},^{\tau}\sigma,$ and $\sigma\circ\tau$.
\item If $\sigma(\mathbb{X})$ is a class-theoretical term and $\phi(\mathbb{X})$ is a class-theoretical formula, then $$\{\sigma(\mathbb{X}):\phi(\mathbb{X})\}=\{x: \exists y\big[\phi(y) \wedge x=\sigma(y)\big]\}.$$ Similarly, we use the notations $\{\sigma(\mathbb{X},\mathbb{Y}):\phi(\mathbb{X},\mathbb{Y})\}$, etc.
\item $<\sigma(\mathbb{X}):\mathbb{X}\in \mathbb{I}>$ denotes the function $\{(\mathbb{X},\sigma(\mathbb{X})):\mathbb{X}\in \mathbb{I}\}$ for any term $\sigma(\mathbb{X})$ such that $\sigma(\mathbb{X})$ is a set for all $\mathbb{X}\in\mathbb{I}$. \\
\end{enumerate}
\end{customdefn}

To illustrate the utility of this notational expansion, we restate three of the more apparently burdensome definitions from Definition $2.20$. \\

\begin{customdefn}{2.20'}
\begin{enumerate}
\item $\mathbb{R}^{-1}=\{(a,b):b\mathbb{R}a\}$. $\mathbb{R}^{-1}$ is called the {\bf inverse} of $\mathbb{R}$.
\item $\mathbb{R}\upharpoonleft\mathbb{X}=\{(a,b): a\mathbb{R}b \wedge a\in\mathbb{X}\big]\}$. $\mathbb{R}\upharpoonright\mathbb{X}$ is called the {\bf domain restriction of $\mathbb{R}$ to $\mathbb{X}$}.
\item $\mathbb{R}|\mathbb{A}=\{(a,c):\exists b\big(a\mathbb{R}b\wedge b\mathbb{A}c\big)\}$. $\mathbb{R}|\mathbb{A}$ is called the {\bf relative product of $\mathbb{R}$ with respect to $\mathbb{A}$}. \\
\end{enumerate}
\end{customdefn}

Thusly, the reader can hopefully see that this expansion is quite useful when trying to express definitions concisely. \\

\begin{customdefn}{2.23}
\begin{enumerate}
\item Lowercase greek letters $\alpha,\beta,\gamma,\dots$ will be used to denote ordinals that are sets unless otherwise specified.
\item $\leq = \{(\alpha,\beta):\alpha\in\beta\vee\alpha=\beta\}.$ We write $\alpha\leq\beta$ iff $(\alpha,\beta)\in\leq$.
\item $< = \{(\alpha,\beta):\alpha\leq\beta \wedge \alpha\neq\beta\}.$ We write $\alpha<\beta$ iff $(\alpha,\beta)\in <$. \\
\end{enumerate}
\end{customdefn}

Thus we see that $\alpha<\beta$ is logically equivalent to $\alpha\in\beta$, and this will be tacitly assumed in much of what follows. We now present all of the remaining facts we will use regarding ordinals. \\

\begin{customthm}{2.24}
\begin{enumerate}
$\leq$ is a well-ordering with field $O_n$. Further:
\item For any non-empty class $\mathbb{A}\subseteq O_n$, $\bigcap\mathbb{A}$ is the $\leq$-least member of $\mathbb{A}$.
\item $0$ is the $\leq$-least member of $O_n$.
\item $\alpha=\{\beta:\beta<\alpha\}$.
\item $\alpha<\beta \iff \mathcal{S}\alpha\leq\beta$.
\item $\alpha<\mathcal{S}\beta \iff \alpha\leq\beta$. 
\item $\nexists \beta\big(\alpha<\beta<\mathcal{S}\alpha\big)$.
\item $\alpha\leq\beta \iff \alpha\subseteq\beta$.
\item $0\neq\mathbb{A}\subset O_n \Rightarrow \bigcup\mathbb{A}$ is the $\leq$-least upper bound of $\mathbb{A}$.
\item $\mathcal{S}\alpha=\mathcal{S}\beta \iff \alpha=\beta$.
\item $\mathcal{S}\alpha<\mathcal{S}\beta \iff \alpha<\beta$.
\item $\alpha=\bigcup\mathcal{S}\alpha$.
\item $\alpha=\mathcal{S}\bigcup\alpha$ or $\alpha=\bigcup\alpha$. \\
\end{enumerate}
\end{customthm}

This concludes our discussion of the basic properties of ordinals; the material presented above will be ubiquitous throughout the remainder of the paper. \\

\vspace{30mm}

\section{Transfinite Induction}

Here we provide the standard theorems concerning induction over ordinal classes, within the context of MK class theory. The outline for the following material follows closely to the exposition given in D. Monk [1]. \\

\begin{customthm}{3.1 -- Generalized Principle of Transfinite Induction}
If $\mathbb{A}$ is an ordinal and $\mathbb{X}$ is a class such that for every $\alpha\in\mathbb{A}$, $$\forall\beta\big(\beta<\alpha\implies\beta\in\mathbb{X}\big)\implies\alpha\in\mathbb{X},$$ then $\mathbb{A}\subseteq\mathbb{X}$.
\end{customthm}
\begin{proof}
Suppose $\mathbb{A}$ is an ordinal, and $\mathbb{X}$ has the above property, and suppose that $\mathbb{A}\nsubseteq\mathbb{X}$.  Then $\mathbb{A}\sim\mathbb{X}\neq0$; let $\alpha$ be the $\leq$-least member of $\mathbb{A}\sim\mathbb{X}$. We then have that $$\forall\beta\big(\beta<\alpha\implies\beta\in\mathbb{X}),$$ thus $\alpha\in\mathbb{X}$ by assumption, a contradiction. We thusly conclude that $\mathbb{A}\subseteq\mathbb{X}$, completing the proof. \\
\end{proof}

This deceptively simple theorem is actually the most general and powerful induction principle over ordinal classes in this context -- induction over $\omega$ or larger transfinite ordinal classes will all follow as a consequence of Theorem {\it 3.1}. We now expand our vocabulary to delineate between two distinct types of ordinals. \\

\begin{customdefn}{3.2} \
\begin{enumerate}
\item $\alpha$ is a {\bf successor ordinal} iff there exists some $\beta$ such that $\alpha=\mathcal{S}\beta$. We will then write $\alpha=\beta-1$.
\item $\alpha$ is a {\bf limit ordinal} iff $\alpha$ is not a successor ordinal. If $\alpha\neq0$, we may refer to $\alpha$ as a {\bf non-zero limit ordinal}. \\
\end{enumerate}
\end{customdefn}

Note that, by definition, every ordinal is either a succcessor ordinal or a limit ordinal. Although it may not immedately be clear to the reader that there are any limit ordinals besides $0$, which is a limit ordinal vacuously, we will prove very shortly that a profusion of them do indeed exist -- this is essentially a consequence of Theorem {\it 2.6}. First, we provide one refinement of Theorem {\it 3.1} using the language of successor and limit ordinals. \\

\begin{customthm}{3.3} \
\begin{enumerate}
\item $\alpha$ is a successor ordinal iff $\bigcup\alpha<\alpha$.
\item $\alpha$ is a limit ordinal iff $\bigcup\alpha=\alpha$.
\item $\alpha$ is a limit ordinal iff $\forall\beta\big[\beta<\alpha\implies\exists\gamma\big(\beta<\gamma<\alpha)\big]$.
\end{enumerate}
\end{customthm}
\begin{proof}
For {\it (1)}, suppose $\alpha$ us a successor ordinal, so $\alpha=\mathcal{S}\beta$ for some $\beta$. We then observe that $$\bigcup\alpha=\bigcup\mathcal{S}\beta=\beta$$ by Theorem {\it 2.24 (11)}, thus $$\bigcup\alpha=\beta<\mathcal{S}\beta=\alpha,$$ completing the proof of {\it (1)} one way. For the other direction, suppose $\bigcup\alpha<\alpha$. Then by Theorem {\it 2.24 (12)} we have that $$\alpha\neq\bigcup\alpha\implies\alpha=\mathcal{S}\bigcup\alpha,$$ thus $\alpha$ is a successor ordinal. This completes the proof of {\it (1)}. \\ For {\it (2)}, suppose $\alpha$ is a limit ordinal. Then by Theorem {\it 2.24 (12)}, $$\nexists\beta\big(\alpha=\mathcal{S}\beta\big)\implies\alpha\neq\mathcal{S}\bigcup\alpha\implies\alpha=\bigcup\alpha,$$ completing the proof of {\it (2)} one way. For the other direction, suppose $\alpha=\bigcup\alpha$, and suppose that $\alpha=\mathcal{S}\beta$ for some $\beta$. Then we have that $$\bigcup\mathcal{S}\beta=\bigcup\alpha=\alpha=\mathcal{S}\beta,$$ but by Theorem {\it 2.24 (11)} we also have that $\bigcup\mathcal{S}\beta=\beta$, thus $\beta=\mathcal{S}\beta\implies\beta\in\beta,$ a contradiction by Corollary {\it 1.13}. We thusly conclude that there is no $\beta$ such that $\alpha=\mathcal{S}\beta$, thus $\alpha$ is a limit ordinal.  This completes the proof of {\it (2)}. \\ For {\it (3)}, suppose again that $\alpha$ is a limit ordinal. If there exists a $\beta<\alpha$ such that there is no $\gamma$ satisfying $\beta<\gamma<\alpha$, then by Theorem 2.24 {\it (6)} we have that $$\beta<\alpha\leq\mathcal{S}\beta\implies\alpha=\mathcal{S}\beta,$$ a contradiction since $\alpha$ is a limit ordinal; this completes the proof one way. For the other direction, suppose that for all $\beta<\alpha$ there exists a $\gamma$ such that $\beta<\gamma<\alpha$, and that $\alpha=\mathcal{S}\beta$ for some $\beta$. We then have that $$\beta<\alpha\implies\exists\gamma\big(\beta<\gamma<\alpha\big)\implies\beta<\gamma<\mathcal{S}\beta,$$ a contradiction by Theorem 2.24 {\it (6)} again. We thusly conclude that there is no $\beta$ such that $\alpha=\mathcal{S}\beta$, thus $\alpha$ is a limit ordinal. This completes the proof of {\it (3)}, completing the proof overall. \\
\end{proof}

We thusly see that being a successor ordinal or limit ordinal can be recast in terms of equality with infinitary unions and successor functions, and that any limit ordinal will always have an infinite number of other ordinals 'between' itself and any of its members under $\leq$. We now provide the aforementioned refinement of Theorem {\it 3.1}. \\

\begin{customthm}{3.4}
Suppose $\mathbb{A}$ is an ordinal, and $\mathbb{X}$ is a class with the following properties:
\begin{enumerate}
\item $0\in\mathbb{X}$.
\item For all $\alpha\in\mathbb{A}$, if $\alpha\in\mathbb{X}$ and $\mathcal{S}\alpha\in\mathbb{A}$, then $\mathcal{S}a\in\mathbb{X}$.
\item For all $\alpha\in\mathbb{A}$, if $\alpha$ is a limit ordinal and $\beta\in\mathbb{X}$ for all $\beta<\alpha$, then $\alpha\in\mathbb{X}$.
\end{enumerate}
Then $\mathbb{A}\subseteq\mathbb{X}$.
\end{customthm}
\begin{proof}
Let $\mathbb{A}$ be an ordinal, and $\mathbb{X}$ be a class with the properties listed above. By Definition {\it 3.2}, for any $\alpha\in\mathbb{A}$ we have two possible options:
\begin{enumerate}
\item $\alpha$ is a successor ordinal.
\item $\alpha$ is a limit ordinal.
\end{enumerate}
We will use Theorem {\it 3.1}; suppose that $\alpha\in\mathbb{A}$ and $\beta\in\mathbb{X}$ for every $\beta<\alpha$. If $\alpha$ is a successor ordinal, then $\alpha=\mathcal{S}\gamma$ for some $\gamma$, and $$\gamma\in\alpha\in\mathbb{A}\implies\gamma\in\mathbb{A},$$ since $\mathbb{A}$ is an ordinal and consequently membership transitive. Further, we have that $\gamma<\alpha\implies\gamma\in\mathbb{X}$ by assumption, thus $\mathcal{S}\gamma=\alpha\in\mathbb{X}$ by property {\it (2)} above. If $\alpha$ is a limit ordinal and $\alpha=0$ then $\alpha\in\mathbb{X}$ by {\it (1)}; if $\alpha$ is a non-zero limit ordinal, then $\alpha\in\mathbb{X}$ by property {\it (3)} above and our assumption. Consequently, $\mathbb{X}$ is a class satisfying the conditions of Theorem {\it 3.1} for $\mathbb{A}$, thus $\mathbb{A}\subseteq\mathbb{X}$. This completes the proof.  \\
\end{proof}

\section{Finite and Infinite Ordinals}

Here we provide our definition of a finite vs infinite ordinal, along with some brief discussion of the meaning behind the delineation. The first infinite ordinal we will introduce in this section is $\omega$, the countable infinity.  In many ways, the countable infinity serves as the fundamental transfinite object of interest -- many fascinating properties of transfinite ordinal numbers can be understood in terms of the countable infinity, particularly in combination with a powerful axiom of choice.  \\

\begin{customdefn}{4.1}
$\omega = \bigcap\{\alpha:0\in\alpha\wedge\forall\beta\big(\beta\in\alpha\Rightarrow\mathcal{S}\beta\in\alpha\big)\}$. $\omega$ is called the {\bf countable infinity}, or the {\bf set of all natural numbers}. Members of $\omega$ are called {\bf natural numbers}, and natural numbers will be written using lowercase italic letters $l,m,n,i$ unless otherwise specified.\\
\end{customdefn}

\begin{customthm}{4.2}
$\omega$ is the smallest non-zero limit ordinal.
\end{customthm}
\begin{proof}
That $\omega$ is a set follows immedately from the infinity axiom. Let $\mathbb{A}=\{\alpha:\alpha\in\omega\wedge\alpha\subseteq\omega\}.$  We then observe that $0\in\mathbb{A}$, and if $\beta\in\mathbb{A}$ then $\beta\in\omega$ and consequently $\mathcal{S}\beta\in\omega$, and $\mathcal{S}\beta=\beta\cup\{\beta\}\subseteq\omega$, thus $\mathcal{S}\beta\in\mathbb{A}$. We thusly have that $\omega\subseteq\mathbb{A}$, since $\mathbb{A}$ is a class satisfying the class-theoretical formula involved in Definition {\it 4.1} and $\omega$ is the intersection of all such classes. Every member of $\omega$ is an ordinal by definition; suppose $\alpha\in\beta\in\omega$. Then since $\beta\subseteq\omega$ by virtue of the fact that $\omega\subseteq\mathbb{A}$, we have $\alpha\in\omega$, thus $\omega$ is also membership transitive and consequently an ordinal. \\ Suppose that $\omega$ is a succeessor ordinal, so $\omega=\mathcal{S}\zeta$ for some $\zeta\in O_n$. We then have that $\zeta\in\omega\implies\mathcal{S}\zeta\in\omega\implies\omega\in\omega$, a contradiction by Corollary {\it 1.16}. We thusly conclude that $\omega$ is not a successor ordinal, so $\omega$ is a limit ordinal. \\ Suppose that $\gamma$ is also a non-zero limit ordinal, so $0\in\gamma$ by Lemma 2.10. Suppose that $\alpha\in\gamma$; we then have that $\alpha<\mathcal{S}\alpha\leq\gamma$ and since $\gamma$ is a limit ordinal by definition we conclude that $\mathcal{S}\alpha<\gamma$. Consequently $\gamma$ is another set satisfying the class-theoretical formula involved in Definition {\it 4.1}, thus $\omega\subseteq\gamma\implies\omega\leq\gamma$ by Theorem {\it 2.17}, thus $\omega$ is the smallest limit ordinal. This completes the proof. \\
\end{proof}

\begin{customdefn}{4.3} \
\begin{enumerate}
\item $\alpha$ is {\bf finite} iff $\alpha<\omega$.
\item $\gamma$ is {\bf infinite} iff $\omega\leq\gamma$. We may also refer to $\gamma$ as {\bf transfinite}. \\
\end{enumerate}
\end{customdefn}

$\omega$ is the point in $O_n$ at which counter-intuitive behaviour begins to really manifest itself. For any ordinal smaller than $\omega$, taking a successor produces a set that actually contains more members in the sense that there is no injective, surjective function between any finite ordinal and its successor. This stops being true at $\omega$, since $\mathbb{F}=\langle\mathcal{S}i:i\in\omega\rangle\cup{(0,\omega)}$ is an injective, surjective function mapping $\mathcal{S}\omega$ onto $\omega$, and in general for any $\gamma\geq\omega$ we recall that $\mathbb{I}=\{(x,x):x\in\mathbb{V}\}$ and observe that $$\mathbb{H}=\langle \mathcal{S}i:i\in\omega\rangle\cup\big[\mathbb{I}\upharpoonright(\gamma\sim\omega)\big]\cup\{(\gamma,0)\}$$ is surjective and injective, and maps $\mathcal{S}\gamma$ onto $\gamma$ -- the profusion of transfinite sets which are equipotent with $\omega$ is the phenomenon known as being {\it countable}, and it is a consequence of this counter-intuitive behaviour. Note that for any finite ordinal $n$, if we redefine $\mathbb{H}$ to have domain $n$ we have that $\mathbb{H}(n)=n$, so this function no longer maps successors onto their predecessors in a finite setting. We now briefly define the notion of a Cardinal number, mainly for the sake of discussion. \\

\begin{customdefn}{4.4}
For any ordinal $\alpha$, 
\begin{enumerate}
\item $\alpha$ is a {\bf Cardinal number} iff there is no one-one function between $\alpha$ and any $\beta\in\alpha$. 
\item $Card=\{\alpha:\alpha\ \text{is a Cardinal number}\}$.
\end{enumerate}
\end{customdefn}

It is well known that all natural numbers are cardinal numbers, $\omega$ is a cardinal number, and $Card$ is a proper class. After $\omega$ however, we do not encounter another Cardinal number for a {\it long} time -- we denote this next Cardinal number by $\omega^+$. This is really at the heart of much apparently strange behaviour in $O_n$. What this phenomenon tells us is fascinating, despite its strange manifestation -- for any finite set, there is only one way (up to isomorphism) to well-order a set having that number of elements.  Once we have an infinite number of elements in our class however, there are actually many non-isomorphic ways to well-order such a class without adding any additional elements -- larger order-types allow for more complicated structures, however. A good example of this is the fact that $\mathbb{C}$ is isomorphic to $\mathbb{R}^2$ with some additional structure defined and consequently has the same number of members as $\mathbb{R}$ (namely $\omega^+$ members, assuming the Generalized Continuum Hypothesis), however any one-one mapping between $\mathbb{C}$ and $\mathbb{R}$ will necessarily destroy some of that structure. This is because $\mathbb{C}$ has a much larger {\it order-type} than $\mathbb{R}$, despite having the same number of elements -- we have defined much more structure among the elements of $\mathbb{C}$ than we have amongst those of $\mathbb{R}$. \\  Thusly, we see that the delineation between 'finite' and 'infinite' can be understood in terms of the ordering behaviour of the classes involved -- finite classes can only be well ordered in one way up to isomorphism once you know their cardinality, however transfinite classes may be well-ordered in a profusion of non-isomorphic ways that all have the same cardinality. This is why the finite cardinals coincide completely with the finite ordinals, however there are many transfinite ordinals between each transfinite cardinal number -- the ordinals between each pair of transfinite cardinal numbers each represent a non-isomorphic way to well-order a set with the smaller of the two transfinite cardinalities. \\

\section{Recursion}

Here we present the General Recursion Principle -- it is the strongest recursion principle in the context of MK class theory, allowing for recursion over well-founded relations.  This will allow for recursion over $O_n$ since $\leq$ is a total ordering on $O_n$, and for recursion over $O_n^n$ since it can be well ordered lexicographically -- we will use the former to define ordinal arithmetic, and we will use recursion over $O_n\times O_n=O_n^2$ to define the Transfinite Hyperoperation sequence. \\

\begin{customthm}{5.1 -- General Recursion Principle}
Let $\mathbb{R}$ be a well-founded relation such that for all $x\in Fld\mathbb{R}$, $\{y:y\mathbb{R}x\}$ is a set, and let $\mathbb{F}$ be a function with $dmn\mathbb{F}=Fld\mathbb{R}\times\mathbb{V}$. Then there is a unique function $\mathbb{G}$ such that $dmn\mathbb{G}=Fld\mathbb{R}$ and for all $x\in Fld\mathbb{R}$, $$\mathbb{G}(x)=\mathbb{F}(x,\mathbb{G}\upharpoonright\{y:y\mathbb{R}x\}).$$ \\
\end{customthm}

A proof of this theorem occupies roughly two pages in D. Monk [1], and will not be reproduced here as it is rather tangential to the main purpose of this paper. Despite this, all definitions for recursive ordinal arithmetic in the next section will depend explicitly on this theorem -- if the reader is not familiar with recursion, it may well be worth taking a moment to consider how Theorem {\it 5.1} applies to the definitions in the next section. \\

\vspace{50mm}

\section{Recursive Ordinal Arithmetic}

Here we present the standard recursive definitions for addition, multiplication and exponentiation on the ordinals that emulate the behavior of the intuitive addition, multplication and exponentiation defined on the natural numbers. All of the following theorems and definitions are justified by the General Recursion Principle. Note that although we will note prove it here, all of these operations are associative; for more information on recursive ordinal arithmetic in the context of MK class theory, see D. Monk [1] or W. Sierpinski [2]. \\

\begin{customthm}{6.1}
There exists a unique function $\dot+$ such that $dmn\dot+=(O_n\times O_n)$ and $rng\dot+=O_n$, and
\begin{enumerate}
\item $\dot+(\alpha,0)=\alpha$,
\item $\dot+(\alpha,\mathcal{S}\beta)=\mathcal{S}\dot+(\alpha,\beta)$,
\item $\dot+(\alpha,\gamma)=\bigcup_{\delta<\gamma}\dot+(\alpha,\delta)$ if $0\neq\gamma=\bigcup\gamma$. \\
\end{enumerate}
\end{customthm}

\begin{customdefn}{6.2}
$\dot+$ is the unique function asserted to exist in Theorem 6.1; it is called {\bf recursive addition}. Further, we will write $\alpha\dot+\beta=\gamma$ iff $\dot+(\alpha,\beta)=\gamma$. $\alpha\dot+\beta$ is called the {\bf recursive sum of $\alpha$ and $\beta$}. \\
\end{customdefn}

\begin{customthm}{6.3}
For any ordinals $\alpha,\beta$ such that $\alpha<\beta$, there exists a unique $\gamma$ such that $\alpha\dot+\gamma=\beta$. \\
\end{customthm}

We include a dot over the recursive addition symbol to delineate it from the natural addition symbol $+$ we will define shortly, which will actually play the role of addition for us; $\dot+$ is non-commutative for most ordinals $\alpha$ such that $\omega\leq\alpha$. A similar comment applies to recursive multiplication. Note that we now have satisfying algebraic relationships like $\mathcal{S}\alpha=\alpha\dot+1$, $\mathcal{S}\mathcal{S}\alpha=\alpha\dot+2,\dots$. \\

\begin{customthm}{6.4}
There exists a unique function $\dot\times$ such that $dmn\dot\times=(O_n\times O_n)$ and $rng\dot\times=O_n$, and
\begin{enumerate}
\item $\dot\times(\alpha,0)=0$,
\item $\dot\times(\alpha,\mathcal{S}\beta)=\dot\times(\alpha,\beta)\dot+\alpha$,
\item $\dot\times(\alpha,\gamma)=\bigcup_{\delta<\gamma}\dot\times(\alpha,\delta)$ if $0\neq\gamma=\bigcup\gamma$. \\
\end{enumerate}
\end{customthm}

\begin{customdefn}{6.5}
$\dot\times$ is the unique function asserted to exist in Theorem 6.3; it is called {\bf recursive multiplication}. Further, we will write $\alpha\dot\times\beta=\gamma$ iff $\dot\times(\alpha,\beta)=\gamma$. $\alpha\dot\times\beta$ is called the {\bf recursive product of $\alpha$ and $\beta$}.\\
\end{customdefn}

\begin{customthm}{6.6}
There exists a unique function $\uparrow$ such that $dmn\uparrow=(O_n\times O_n)$ and $rng\uparrow=O_n$, and
\begin{enumerate}
\item $\uparrow(\alpha,0)=1$,
\item $\uparrow(\alpha,\mathcal{S}\beta)=\uparrow(\alpha,\beta)\dot\times\alpha$,
\item $\uparrow(\alpha,\gamma)=\bigcup_{\delta<\gamma}\uparrow(\alpha,\delta)$ if $0\neq\gamma=\bigcup\gamma$. \\
\end{enumerate}
\end{customthm}

\begin{customdefn}{6.7}
$\uparrow$ is the unique function asserted to exist in Theorem 6.5; it is called {\bf recursive exponentiation}. Further, we will write $\alpha^{\beta}=\gamma$ iff $\uparrow(\alpha,\beta)=\gamma$. \\
\end{customdefn}

Note that $\uparrow$ has no dot; it will be the only exponentiation we use. We now provide a theorem that will allow us to express any ordinal in a particular 'base' that we wish to choose. \\

\begin{customthm}{6.8}
If $0<\alpha$ and $1<\beta$, then there exist unique $\gamma,\delta,\varepsilon$ such that 
\begin{enumerate}
\item $\alpha=\beta^\gamma\dot\times\delta\dot+\varepsilon$,
\item $\gamma<\alpha$,
\item $0<\delta<\beta$,
\item $\varepsilon<\beta^\gamma$. \\
\end{enumerate}
\end{customthm}

In order to formulate our commutative addition structure $+$, dubbed natural addition after G. Hessenberg (1906), we must first formulate the notion of a recursive finite sum of ordinals. \\

\begin{customdefn}{6.9}
If $\mu\in^mO_n$, we define 
\begin{enumerate}
\item $\dot\Upsigma(\mu,0)=0$,
\item $\dot\Upsigma(\mu,n\dot+1)=\begin{cases} 
      \dot\Upsigma(\mu,n)\dot+\mu_n, & n<m \\
      0 ,& m\leq n. \\
   	  \end{cases}$ 
\end{enumerate}
We write $\dot\Upsigma_{_{i<n}}\mu_i$ instead of $\dot\Upsigma(\mu,n)$. \\
\end{customdefn}

\begin{customthm}{6.10 -- Associative Law for $\dot\Upsigma$}
If $\mu\in^mO_n$, then $$\dot\Upsigma_{_{i<m}}\mu_i = \dot\Upsigma_{_{i<n}}\mu_i \dot+ \dot\Upsigma_{_{n<i<m}}\mu_i.$$ \\
\end{customthm}

\begin{customthm}{6.11 -- Base Expansion Theorem}
Let $1<\beta$. Then for any ordinal $\alpha$ there exist unique $m,\zeta,\delta$ satisfying the following conditions.
\begin{enumerate}
\item $\zeta,\delta\in^mO_n$.
\item $\alpha=\dot\Upsigma_{_{i<m}}\beta^{\zeta_i}\dot\times\delta_i$.
\item For all $i<m$, $\zeta_i\leq\alpha$.
\item If $i\dot+1<m$, then $\zeta_{i\dot+1}<\zeta_i$.
\item For all $i<m$, $0<\delta_i<\beta$. \\
\end{enumerate}
\end{customthm}

Choosing $\beta=10$ leads to the usual school form base-10 representation of any finite ordinal. Of particular interest to us is the choice $\beta=\omega$, specifically because this together with condition {\it 5} forces all of the 'coefficients' $\delta_i$ in the expansion of any ordinal to be {\it natural numbers}.  We call this base-$\omega$ expansion the {\it Cantor Normal form} of an ordinal, and make this terming explicit in the next definition. \\

\begin{customdefn}{6.12 -- Cantor Normal form}
For any ordinal $\alpha$, we define the unique $\dot\Upsigma_{_{i<m}}\omega^{\zeta_i}\dot\times n_i=\alpha$ to be the {\bf Cantor Normal form} of $\alpha$, with $m\in\omega$, $\zeta\in ^mO_n$, and $n\in^m\omega$. This definition is justified by the Base Expansion Theorem. \\
\end{customdefn}

\section{Transfinite Hyperoperation Sequence}

In this section we first give the definition of some 'closure' ordinals with respect to $\dot+$, $\dot\times$, and $\uparrow$; we then generalize these recursive operations using a recursive sequence of binary operations indexed over $O_n$, and generalize the notion of a 'closure' ordinal to the $\alpha$-th hyperoperation. \\
We begin now by defining the aforementioned 'closure' ordinals with respect to the first three recursive binary operations on $O_n$, first explored by Cantor and named here as he named them. \\

\subsection{$\gamma$-numbers, $\delta$-numbers, and $\varepsilon$-numbers}

\begin{customdefn}{7.1} \
\begin{enumerate}
\item $\alpha$ is a {\bf $\gamma$-number} iff $\forall\beta\big(\beta<\alpha\Rightarrow\beta\dot+\alpha=\alpha\big)$.
\item $\alpha$ is a {\bf $\delta$-number} iff $\forall\beta\big(\beta<\alpha\Rightarrow\beta\dot\times\alpha=\alpha\big)$.
\item $\alpha$ is a {\bf $\varepsilon$-number} iff $\forall\beta\big(\beta<\alpha\Rightarrow\beta^\alpha=\alpha\big)$. \\
\end{enumerate}
\end{customdefn}

Thusly we see that $\gamma$-numbers, $\delta$-numbers, and $\varepsilon$-numbers represent 'absorbtion points' for the recursive binary operations we have defined on the ordinals.  It is clear that $\omega$ is a $\gamma$-number, a $\delta$-number, and a $\varepsilon$-number since any ordinal $n<\omega$ is finite, and consequently $n$ composed with $\omega$ on the right will be an infinitary union of $n$ composed with every natural number on the right, which is $\omega$. In searching for the smallest $\gamma$-number larger than $\omega$, we can see that $\omega^2$ is the smallest viable candidate since $\omega\dot\times n\dot+\omega^2=\bigcup_{\delta<\omega^2}\omega\dot\times n\dot+\delta=\omega^2$, but any ordinal less than $\omega^2$ is of the form $\dot\Upsigma_{i<m}\omega\dot\times n_i+j_i$ and consequently can be passed by ordinals less than it using recursive addition (besides $\omega$). Note that the first $\gamma$-number greater than $\omega$ is $\omega$ composed with itself using the next binary operation 'up' in our three-operation hierarchy, $\dot\times$. \\ 

Similarly, we see that the smallest $\delta$-number greater than $\omega$ is $\omega^{\omega}$, since $\omega^n\dot\times\omega^\omega=\bigcup_{\delta<\omega^\omega}\omega^n\dot\times\delta=\omega^\omega$ and any ordinal less than $\omega^\omega$ is of the form $\dot\Upsigma_{i<m}\omega^{l_{_i}}\dot\times n_i+j_i$ and consequently can be passed by ordinals less than it using recursive multiplication (besides $\omega$).  Note that, once again, the smallest $\delta$-number greater than $\omega$ is $\omega$ composed with itself using the next binary operation 'up' in our three-operation hierarchy, $\uparrow$. \\

Finally, we note that the smallest $\varepsilon$-number greater than $\omega$ is $\omega^{\omega^{\omega^{.^{.^{.}}}}}$. In order to more precisely formulate what we mean by the expression $\omega^{\omega^{\omega^{.^{.^{.}}}}}$, we now define the Transfinite Hyperoperation Sequence. \\

\subsection{The Transfinite Hyperoperation Sequence}

\begin{customthm}{7.2 -- The Transfinite Hyperoperation Sequence $\mercury$}
There exists a unique function $\mercury$ such that $dmn\mercury=\big(O_n\times(O_n\times O_n)\big)$ and $rng\mercury=O_n$ that is defined recursively as follows:
$$\mercury_{_\Omega}(\alpha,\beta)=\begin{cases} 
\mathcal{S}\alpha, & \text{if} \ \Omega=0. \\
\alpha, & \text{if} \ \Omega=1 \ \text{and} \ \beta=0. \\
\mathcal{S}\alpha,& \text{if} \ \Omega=1 \ \text{and} \ \beta=1. \\
0, & \text{if} \ \Omega=2 \ \text{and} \ \beta=0. \\
\alpha, & \text{if} \ \Omega=2 \ \text{and} \ \beta=1. \\
1, & \text{if} \ \Omega\geq3 \ \text{and} \ \beta=0. \\
\alpha, & \text{if} \ \Omega\geq3 \ \text{and} \ \beta=1. \\
\mercury_{_{\Omega-1}}\big(\alpha,\mercury_{_\Omega}(\alpha,\beta-1)\big), & \text{if} \ \Omega=\mathcal{S}\bigcup\Omega \ \text{and} \ 1<\beta=\mathcal{S}\bigcup\beta. \\
\bigcup_{\delta<\beta}\mercury_{_\Omega}(\alpha,\delta), & \text{if} \ \Omega=\mathcal{S}\bigcup\Omega \ \text{and} \ \ 1<\beta=\bigcup\beta . \\
\bigcup_{\rho<\Omega}\mercury_{_\rho}(\alpha,\beta), & \text{if} \ 0\neq\Omega=\bigcup\Omega \ \text{and} \ 1<\beta=\mathcal{S}\bigcup\beta. \\
\bigcup_{\rho<\Omega}\bigcup_{\delta<\beta}\mercury_{_\rho}(\alpha,\delta), & \text{if} \ 0\neq\Omega=\bigcup\Omega \ \text{and} \ 1<\beta=\bigcup\beta. \\
\end{cases}$$ \\
\end{customthm}
\begin{proof}
The proof of uniqueness for the first four operations in the sequence, where $\Omega=0,1,2,3$, can be found in any introductory set theory book, particularly D. Monk [1], observing that $\mercury_0(\alpha,\beta)=\mathcal{S}\alpha$, $\mercury_1(\alpha,\beta)=\alpha\dot+\beta$, $\mercury_2(\alpha,\beta)=\alpha\dot\times\beta$, and $\mercury_3(\alpha,\beta)=\alpha^\beta$. The uniqueness of all remaining functions follows in much the same fashion, using infinitary unions at every limit ordinal. \\
\end{proof}

\begin{note}
\begin{enumerate}
\item $\mercury_4(\alpha,4)=\alpha^{(\alpha^{(\alpha^\alpha)})}$,
\item $\mercury_i(m,n)<\omega$,
\item $\mercury_\omega(3,3)=\omega$. \\
\end{enumerate}
\end{note}

\begin{customdefn}{7.3}
 $\mercury$ is the unique function defined in Theorem 7.7; it is called the {\bf Transfinite Hyperoperation Sequence}. Further, $\mercury_{_\zeta}$ is the unique function defined by Theorem 7.7 in the case that $\Omega=\zeta$; it is called the {\bf $\zeta^{th}$ recursive hyperoperation}. Further, we will write $\alpha\mercury_{_\zeta}\beta=\gamma$ iff $\mercury_{_\zeta}(\alpha,\beta)=\gamma$. $\alpha\mercury_{_\zeta}\beta$ is called the {\bf $\zeta^{th}$ recursive hypercomposition of $\alpha$ and $\beta$}. \\
\end{customdefn}

\begin{customdefn}{7.4}
$\uparrow\uparrow$ is the unique function defined by Theorem 7.7 in the case that $\Omega=4$; it is called {\bf recursive tetration}. Further, we will write $^\alpha\beta=\gamma$ iff $\uparrow\uparrow(\alpha,\beta)=\gamma$. \\
\end{customdefn}

We can now formulate precisely that the first $\varepsilon$-number greater than $\omega$ is $\varepsilon_{_0}=\ ^\omega\omega=\omega^{\omega^{\omega^{.^{.^{.}}}}}$. $\varepsilon_{_0}$ is of great importance in many areas of mathematics, the primary reason being that any ordinal less than $\varepsilon_{_0}$ has a Cantor Normal form whose components can also be expressed in Cantor normal form, etc., and this process will only iterate a finite number of times. For ordinals greater than or equal to $\varepsilon_{_0}$ this process will iterate at least a countable number of times -- in a certain sense, any ordinal less than $\varepsilon_{_0}$ has a particular finite configuration of natural numbers combined with $\omega$ that can be used to uniquely express it, while for any ordinal greater than or equal to $\varepsilon_{_0}$ we need an infinite configuration. \\

Note that, once again, the smallest $\uparrow$-number greater than $\omega$ is $\omega$ composed with itself using the next hyperoperation up in $\mercury$. \\

We now present a standard theorem concerning $\gamma$, $\delta$, and $\varepsilon$ numbers. \\

\begin{customthm}{7.5} \
\begin{enumerate}
\item If $\alpha$ is a $\gamma$-number, then for all $\beta<\alpha$ and $\zeta<\alpha$, \\ $\beta\dot+\zeta<\alpha.$
\item If $\alpha$ is a $\delta$-number, then for all $\beta<\alpha$ and $\zeta<\alpha$, \\ $\beta\dot\times\zeta<\alpha.$
\item If $\alpha$ is a $\varepsilon$-number, then for all $\beta<\alpha$ and $\zeta<\alpha$, \\ $\beta^\zeta<\alpha.$ \\
\end{enumerate}
\end{customthm}
\begin{proof}
A proof of this theorem can be found in any introductory book on the theory of ordinals, particularly D. Monk [1]. \\
\end{proof}

Thusly we see that, in addition to 'absorbing' particular binary operations on the left, these numbers also represent possible 'closure points' for the recursive binary operations defined in $\mercury$.  Using the next definition, we now generalize the notion of $\gamma$-numbers, $\delta$-numbers, and $\varepsilon$-numbers to the $\alpha^{th}$ hyperoperation and tweak our language accordingly. \\

\begin{customdefn}{7.6}
For $n>0$, $\alpha$ is a {\bf $\mercury_{_\Omega}$-number} iff $\forall\beta\forall\gamma\big(\beta<\alpha\wedge\gamma<\alpha\Rightarrow\beta\mercury_{_\Omega}\gamma<\alpha\big)$. \\
\end{customdefn}

\begin{note}
Under the above definition we have that $\gamma$-numbers are $\dot+$-numbers, $\delta$-numbers are $\dot\times$-numbers, and $\varepsilon$-numbers are $\uparrow$-numbers. We also required that $n>0$ when referencing a $\mercury_n$-number in general, as there are no $\mercury_0$ numbers besides $O_n$, since the successor of any set ordinal is a new, unique ordinal by Corollary {\it 2.15}.\\
\end{note}

As the reader may have guessed, we now generalize the phenomenon we have observed thus far concerning numbers that 'absorb' a particular type of binary operation on the left. \\

\begin{customthm}{7.7}
\begin{enumerate}
\item $\omega$ is a $\mercury_n$-number for all $0<n<\omega$.
\item The smallest $\mercury_n$-number greater than $\omega$ is $\mercury_{n\dot+1}(\omega,\omega)$ for all $0<n<\omega$. 
\item If $\lambda$ is a $\mercury_n$-number, then the smallest $\mercury_n$-number greater than $\lambda$ is $\mercury_{n\dot+1}(\lambda,\omega)$ for all $0<n<\omega$.\\
\end{enumerate}
\end{customthm}
\begin{proof}
For {\it (1)}, we simply observe that any composition of a natural number with $\omega$ on the right using a finitely indexed hyperoperation will be the union of that natural number composed with every other natural number on the right, which is $\omega$. Explicitly, for any $0<n,m<\omega$, $$n\mercury_m\omega=\bigcup_{i\in\omega}n\mercury_m i=\omega.$$ This completes the proof of {\it (1)}. \\ For {\it (2)}, we observe that for any $\mercury_n$ if we can compose $\omega$ with itself then we can create expressions of the form $$\omega\mercury_n\omega\mercury_n\dots\mercury_n\omega=\omega\mercury_{n\dot+1}m$$ using only $\mercury_n$, where we composed $\omega$ with itself $m$ times on the left-hand side above -- this follows immedately from the definition of $\mercury_n$ and $\mercury_{n+1}$. We may then create expressions of the form $$(\omega\mercury_{n\dot+1}m)\mercury_n(\omega\mercury_{n\dot+1}p)=\omega\mercury_{n\dot+1}(m\dot+p)$$ using only $\mercury_n$, for any $m,p\in\omega$. We then observe that $$\omega<\alpha<\omega\mercury_{n\dot+1}\omega\implies\exists m\big(\omega\mercury_{n\dot+1}1<\alpha<\omega\mercury_{n\dot+1}m\big),$$ since $0<n$, thus $\omega<\alpha<\omega\mercury_{n\dot+1}\omega$ implies that $\alpha$ is not a $\mercury_n$-number. We finally observe that $$\nexists m\big(\omega\mercury_{n\dot+1}\omega<\omega\mercury_{n\dot+1}m\big)\implies\forall m\big(\omega\mercury_{n\dot+1}m<\omega\mercury_{n\dot+1}\omega\big),$$ thus $\omega\mercury_{n\dot+1}\omega=\mercury_{n\dot+1}(\omega,\omega)$ is a $\mercury_n$ number. {\it (2)} then follows by induction over $\omega$. \\ For {\it (3)}, suppose $\lambda$ is a $\mercury_n$-number. We then observe that $$\lambda<\alpha<\lambda\mercury_{n\dot+1}\omega\implies\exists m\big(\alpha<\lambda\mercury_{n+1}m\big),$$ thus $\lambda<\alpha<\lambda\mercury_{n\dot+1}\omega$ implies that $\alpha$ is not a $\mercury_n$-number.  We then observe that $$\nexists m\big(\lambda\mercury_{n\dot+1}\omega<\lambda\mercury_{n\dot+1}m\big)\implies\forall m\big(\lambda\mercury_{n\dot+1}m<\lambda\mercury_{n\dot+1}\omega\big),$$ thus $\lambda\mercury_{n\dot+1}\omega=\mercury_{n\dot+1}(\lambda,\omega)$ is a $\mercury_n$ number. {\it (3)} then follows by induction over $\omega$, observing that it is true for $n=1$. This completes the proof.\\ 
\end{proof}

Thus we see that, should we desire, we can always find ordinal sets that have successively 'larger' finitely indexed binary operations equipped on them, and that these ordinal sets will be closed under their 'large' binary operations. It is not immedately clear what happens past $\omega$, however I suspect we may be assuming the existence of an inaccesible cardinal by assuming the existence of, for example, a $\mercury_{_{\omega^\omega}}$-number. We now generalize a well-known theorem concerning transfinite $\dot+$, $\dot\times$ and $\uparrow$-numbers. \\

\begin{customthm}{7.8}
$\lambda$ is a transfinite $\mercury_n$-number $\iff \exists\zeta\big(\lambda=\omega\mercury_{n\dot+1}(\omega\mercury_{n\dot+1}\zeta)\big)$. \\
\end{customthm}

This theorem is rather tangential to the main course of our development here and as such I will not devote a large portion of time to proving it, however it represents a nice generalization of the representation theorems for $\delta$, $\gamma$ and $\varepsilon$-numbers. \\

\section{Natural Ordinal Arithmetic}

Here we present the aforementioned 'natural' definitions for addition and multiplication of ordinals (first given by Hessenberg in 1906, although defined in a different vocabulary then); these will serve as the basic binary operations in the main constructions of this paper. For more detailed information on natural ordinal arithmetic, see W. Sierpinski [2]. \\

\begin{customdefn}{8.1}
Suppose $\alpha,\beta\in O_n$, with $\zeta\in^nO_n$, $n'\in^n\omega$, $\gamma\in^mO_n$, and $m'\in^m\omega$, and let \\ $rng\zeta\cup rng\gamma=\mathbb{A}=\{\dot\mu_0,\dots,\dot\mu_{\ell-1}\}\subset O_n$ be ordered such that $\dot\mu_0>\dot\mu_1>\dots>\dot\mu_{\ell-1}$; then define a fifth function $\mu\in^{\ell}\mathbb{A}$ such that $\mu_i=\dot\mu_i$ for each $i<\ell$. Further, let $\alpha=\dot\Upsigma_{_{i<n}}\omega^{\zeta_i}\dot\times n'_i$ and $\beta=\dot\Upsigma_{_{i<m}}\omega^{\gamma_i}\dot\times m'_i$ be the Cantor Normal forms of $\alpha$ and $\beta$. We define $$\alpha+\beta=\dot\Upsigma_{_{i<\ell}}\omega^{\mu_i}\dot\times (n_i\dot+m_i),$$ where $$n_i=\begin{cases}n'_i,&\ \text{if}\ \mu_i\in rng\zeta \\ 0,&\ \text{if}\ \mu_i\notin rng\zeta\end{cases}$$ and $$m_i=\begin{cases}m'_i,&\ \text{if}\ \mu_i\in rng\gamma \\ 0,&\ \text{if}\ \mu_i\notin rng\gamma\end{cases}.$$ $+$ is called {\bf natural addition}, or simply {\bf addition}. $\alpha+\beta$ is called the {\bf natural sum of $\alpha$ and $\beta$}, or simply the {\bf sum of $\alpha$ and $\beta$}. \\
\end{customdefn}

Although this definition seems somewhat technical at a glance, it is actually a much nicer algebraic operation on the ordinals than $\dot+$. As we will see in the next theorem, it is commutative -- this usually comes at the cost of 'right continuity in the argument', in the sense that $1+x=\omega$ has no solution in the ordinals. Although this may seem like a critical failure, we will get around this problem with our 'Surinteger' extension of the ordinals. \\

It is worth noting further that $1\dot+\omega=\omega\neq\omega\dot+1=\omega+1=1+\omega$. This behavior generalizes in the intuitively expected way, and we thusly see that the apparent technicality of this definition is really just the technical way of 'forcing' commutativity on our addition of ordinals in the canonical way. It orders the sum in a consistent fashion for any binary pairing of ordinals according to their Cantor normal forms, then uses $\dotplus$ to combine the elements once they're all in order. \\

\begin{customthm}{8.2 -- Associative and Commutative Law for $+$}
 \
\begin{enumerate}
\item $\forall\alpha\forall\beta\forall\gamma\big(\alpha+(\beta+\gamma)=(\alpha+\beta)+\gamma\big)$.
\item $\forall\alpha\forall\beta\big(\alpha+\beta=\beta+\alpha\big)$.
\end{enumerate}
\end{customthm}
\begin{proof}
For {\it (1)}, suppose $\alpha,\beta,\gamma\in O_n$, with $\zeta\in^nO_n$, $n'\in^n\omega$, $\gamma\in^mO_n$, $m'\in^m\omega$, $\psi\in^p O_n$, $p'\in^p\omega$, and let $rng\zeta\cup rng\gamma=\mathbb{A}=\{\dot\mu_0,\dots,\dot\mu_{\ell-1}\}\subset O_n$ be ordered such that $\dot\mu_0>\dot\mu_1>\dots>\dot\mu_{\ell-1}$, and let $rng\gamma\cup rng\psi=\mathbb{B}=\{\dot\rho_0,\dots,\dot\rho_{q-1}\}$ be ordered such that $\dot\rho_0>\dots>\dot\rho_{q-1}$; then define a third function $\mu\in^{\ell}\mathbb{A}$ such that $\mu_i=\dot\mu_i$ for each $i<\ell$, and a fourth function $\rho\in^q\mathbb{B}$ such that $\rho_i=\dot\rho_i$ for all $i<q$. Finally, we let $rng \zeta\cup rng\gamma\cup rng\psi=\mathbb{A}\cup\mathbb{B}=\mathbb{C}=\{\dot\eta_0,\dots,\dot\eta_{r-1}\}$ be ordered such that $\dot\eta_0>\dots>\dot\eta_{r-1}$, and we then define a fifth function $\eta\in^r\mathbb{C}$ such that $\eta_i=\dot\eta_i$ for all $i<r$. Further, let $\alpha=\dot\Upsigma_{_{i<n}}\omega^{\zeta_i}\dot\times n'_i$, $\beta=\dot\Upsigma_{_{i<m}}\omega^{\gamma_i}\dot\times m'_i$, and $\gamma=\dot\Upsigma_{_{i<p}}\omega^{\psi_i}\dot\times p'_i$ be the Cantor Normal forms of $\alpha$, $\beta$, and $\gamma$. We then observe that $$\alpha+(\beta+\gamma)=\alpha+\dot\Upsigma_{_{i<q}}\omega^{\rho_i}\dot\times(m_i\dot+p_i)=\dot\Upsigma_{i<r}\omega^{\eta_i}\dot\times\big(n_i\dot+(m_i\dot+p_i)\big)$$ $$=\dot\Upsigma_{i<r}\omega^{\eta_i}\dot\times\big((n_i\dot+m_i)\dot+p_i\big)=\dot\Upsigma_{i<\ell}\omega^{\mu_i}\dot\times(n_i\dot+m_i)+\gamma=(\alpha+\beta)+\gamma,$$ since $\dot+$ is associative on all ordinals. This completes the proof of {\it (1)}. \\ For {\it (2)}, let $\alpha$ and $\beta$ be as above. We then have that $$\alpha+\beta=\dot\Upsigma_{_{i<\ell}}\omega^{\zeta_i}\dot\times (n_i\dot+m_i)=\dot\Upsigma_{_{i<\ell}}\omega^{\zeta_i}\dot\times (m_i\dot+n_i)=\beta+\alpha,$$ since $\dot+$ is commutative on natural numbers. This completes the proof. \\
\end{proof}

We now formulate the notion of a natural finite sum of ordinals. \\

\begin{customdefn}{8.3}
For all $\dot\mu\in^mO_n$, we define a second function $\mu\in^\omega O_n$ such that $\mu_i=\dot\mu_i$ for all $i<m$ and $\mu_i=0$ for all $i\geq m$. We then define $$\sum(\mu,n)=\mu_0+\mu_1+\dots+\mu_{n-1}.$$ We will write $\sum_{i<n}\mu_i$ instead of $\sum(\mu,n)$. \\
\end{customdefn}

\begin{customthm}{8.4}
$\sum_{i<n}\mu_i$ is commutative for all $\mu$, $m$ and $n$.
\end{customthm}
\begin{proof}
Let $\dot\mu'$ be any function with $dmn\dot\mu'=dmn\dot\mu=m$ and $rng\dot\mu'$ indexed as a permutation of $rng\dot\mu$.  Then $$\sum_{i<n}\mu_i=\sum_{i<n}\mu'_i,$$ since $+$ is commutative for all ordinals by Theorem {\it 8.2}. This completes the proof. \\
\end{proof}

Using definition 8.3, we now formulate natural multiplication. \\

\begin{customdefn}{8.5}
Suppose $\alpha,\beta\in O_n$, with $\alpha=\dot\Upsigma_{_{i<n}}\omega^{\zeta_i}\dot\times n'_i$ and $\beta=\dot\Upsigma_{_{i<m}}\omega^{\gamma_i}\dot\times m'_i$ the Cantor Normal forms of $\alpha$ and $\beta$. We define $$\alpha\times\beta=\sum_{_{i<n}}\sum_{_{j<m}}\omega^{\zeta_i+\gamma_j}n_i\dot\times m_j.$$ $\times$ is called {\bf natural multiplication}, or simply {\bf multiplication}. $\alpha\times\beta$ is called the {\bf natural product of $\alpha$ and $\beta$}, or simply the {\bf product of $\alpha$ and $\beta$}. \\
\end{customdefn}

Thus we see that the natural product of two ordinals behaves like polynomial multiplication with $\omega$ as the lone independent variable and the natural numbers playing the role of coefficients, where we take the natural sum of the exponents. As one would expect, this multiplicative structure is commutative. \\

\begin{customthm}{8.6 -- Associative and Commutative Law  for $\times$} \
\begin{enumerate}
\item $\forall\alpha\forall\beta\forall\gamma\big(\alpha\times(\beta\times\gamma)=(\alpha\times\beta)\times\gamma$.
\item $\forall\alpha\forall\beta\big(\alpha\times\beta=\beta\times\alpha\big)$.
\end{enumerate}
\end{customthm}
\begin{proof}
That $\times$ is associative follows immedately from the associativity of $+$ and $\dot\times$.  \\ For commutativity, suppose $\alpha,\beta\in O_n$, with $\alpha=\dot\Upsigma_{_{i<n}}\omega^{\zeta_i}\dot\times n'_i$ and $\beta=\dot\Upsigma_{_{i<m}}\omega^{\gamma_i}\dot\times m'_i$ the Cantor Normal forms of $\alpha$ and $\beta$. Then $$\alpha\times\beta=\sum_{_{i<n}}\sum_{_{j<m}}\omega^{\zeta_i+\gamma_j}n_i\dot\times m_j = \sum_{_{j<m}}\sum_{_{i<n}}\omega^{\gamma_j+\zeta_i}m_j\dot\times n_i = \beta\times\alpha,$$ since $\dot\times$ is commutative for natural numbers and $\sum(\mu,n)$ is commutative for all $\mu$ and $n$ by Theorem 6.4. This completes the proof. \\
\end{proof}

We now extend the notion of $\dot+$ and $\dot\times$-numbers to $+$ and $\times$. \\

\begin{customdefn}{8.7}
\begin{enumerate}
\item $\alpha$ is a {\bf $+$-number} iff for all $\beta<\alpha$ and $\gamma<\alpha$, \\ $\beta+\gamma<\alpha.$
\item $\alpha$ is a {\bf $\times$-number} iff for all $\beta<\alpha$ and $\gamma<\alpha$, \\ $\beta\times\gamma<\alpha.$ \\
\end{enumerate}
\end{customdefn}

\begin{customthm}{8.8}
\begin{enumerate}
\item Every $\dot+$-number is a $+$-number.
\item Every $\dot\times$-number is a $\times$-number.
\end{enumerate}
\end{customthm}
\begin{proof}
By Theorem 7.2 and Theorem 7.4. \\
\end{proof}

Thus we see that 'closing out' the first two natural arithmetic operations on $O_n$ is easy once we have located the 'absorbtion' points for the first two recursive arithmetic operations. We now give the last definition and theorem necessary to begin constructing number fields. \\

\begin{customdefn}{8.8}
\begin{enumerate}
\item $1^{st}x=\bigcap\bigcap x$. $1^{st}x$ is called the {\bf first coordinate} of $x$.
\item $2^{nd}x=\bigcap\bigcap\bigcap\{x\}^{-1}$. $2^{nd}x$ is called the {\bf second coordinate} of $x$.
\end{enumerate}
\end{customdefn}

These definitions are obviously only really of interest when $x$ is an ordered pair.

\begin{customthm}{8.9}
\begin{enumerate}
\item $1^{st}(a,b)=a$.
\item $2^{nd}(a,b)=b$. \\
\end{enumerate}
\end{customthm}

\section{The Surintegers}

We now define a proper-class sized, commutative, discretely ordered ring with a unique cyclic subring that is consequenty isomorphic to any construction of the Integers. We must first define an additive and multiplicative structure on $O_n\times O_n$ as follows. \\

\begin{customdefn}{9.1}
Let $a,b,c\in O_n\times O_n$. We define
\begin{enumerate}
\item $\ddot+=\{\big((a,b),c\big):(1^{st}a+1^{st}b,2^{nd}a+2^{nd}b)=c\}.$ We will write $a\ddot+ b=c$ iff $\big((a,b),c\big)\in\ddot+$.	
\item $\ddot\times=\{\big((a,b),c\big):(1^{st}a\times2^{nd}b+2^{nd}a\times1^{st}b,1^{st}a\times1^{st}b+2^{nd}a\times2^{nd}b)=c\}.$ We will write $a\ddot\times b=c$ iff $\big((a,b),c\big)\in\ddot\times$. \\
\end{enumerate}
\end{customdefn}

\begin{customthm}{9.2}
$\ddot+$ and $\ddot\times$ are both associative and commutative.
\end{customthm}
\begin{proof}
For the associativity of $\ddot+$, we simply observe that $$a\ddot+(b\ddot+c)=a\ddot+(1^{st}b+1^{st}c,2^{nd}b+2^{nd}c)$$ $$=\big(1^{st}a+(1^{st}b+1^{st}c),2^{nd}a+(2^{nd}b+2^{nd}c)\big)$$ $$=\big((1^{st}a+1^{st}b)+1^{st}c,(2^{nd}a+2^{nd}b)+2^{nd}c\big)$$ $$=(1^{st}a+1^{st}b,2^{nd}a+2^{nd}b)\ddot+c=(a\ddot+b)\ddot+c,$$ since $+$ is associative on all ordinals. For the commutativity of $\ddot+$, we observe that $$a\ddot+b=(1^{st}a+1^{st}b,2^{nd}a+2^{nd}b)=(1^{st}b+1^{st}a,2^{nd}b+2^{nd}a)=b\ddot+a$$ since $+$ is commutative on all ordinals. For the associativity of $\ddot\times$, we observe that $$a\ddot\times(b\ddot\times c)=a\ddot\times(1^{st}b\times2^{nd}c+2^{nd}b\times1^{st}c,1^{st}b\times1^{st}c+2^{nd}b\times2^{nd}c)$$ $$=\big(1^{st}a\times(1^{st}b\times1^{st}c+2^{nd}b\times2^{nd}c)+2^{nd}a\times(1^{st}b\times2^{nd}c+2^{nd}b\times1^{st}c),$$ $$1^{st}a\times(1^{st}b\times2^{nd}c+2^{nd}b\times1^{st}c)+2^{nd}a\times(1^{st}b\times1^{st}c+2^{nd}b\times2^{nd}c)\big)$$ $$=\big(1^{st}a\times1^{st}b\times1^{st}c+1^{st}a\times2^{nd}b\times2^{nd}c+2^{nd}a\times1^{st}b\times2^{nd}c+2^{nd}a\times2^{nd}b\times1^{st}c,$$ $$1^{st}a\times1^{st}b\times2^{nd}c+1^{st}a\times2^{nd}b\times1^{st}c+2^{nd}a\times1^{st}b\times1^{st}c+2^{nd}a\times2^{nd}b\times2^{nd}c\big)$$ $$=\big((1^{st}a\times2^{nd}b+2^{nd}a\times1^{st}b)\times2^{nd}c+(1^{st}a\times1^{st}b+2^{nd}a\times2^{nd}b)\times1^{st}c,$$ $$(1^{st}a\times2^{nd}b+2^{nd}a\times1^{st}b)\times1^{st}c+(1^{st}a\times1^{st}b+2^{nd}a\times2^{nd}b)\times2^{nd}c\big)$$ $$=(1^{st}a\times2^{nd}b+2^{nd}a\times1^{st}b,1^{st}a\times1^{st}b+2^{nd}a\times2^{nd}b)\ddot\times c = (a\ddot\times b)\ddot\times c,$$ since $\times$ is commutative and associative on ordinals. Finally, for the commutativity of $\ddot\times$ we simply observe that $$a\ddot\times b=(1^{st}a\times2^{nd}b+2^{nd}a\times1^{st}b,1^{st}a\times1^{st}b+2^{nd}a\times2^{nd}b)=(1^{st}b\times2^{nd}a+2^{nd}b\times1^{st}a,1^{st}b\times1^{st}a+2^{nd}b\times2^{nd}a)=b\ddot\times a,$$ since $+$ and $\times$ are both commutative on ordinals.  This completes the proof. \\
\end{proof}

We now define the notion of a finite sum in $O_n\times O_n$ under this additive structure. \\

\begin{customdefn}{9.3}
Suppose that $m\in\omega$ and $\mu\in^m(O_n\times O_n)$. We define $$\ddot\sum(\mu,n)=\mu'_0\ddot+\mu'_1\ddot+\dots\ddot+\mu'_n,$$ where $$\mu'_i=\begin{cases}\mu_i,& i<m, \\ 0,& i\leq m.\end{cases}$$ We will write $\ddot\sum_{i<n}\mu_i$ instead of $\ddot\sum(\mu,n)$. \\
\end{customdefn}

We are now technically prepared to define the Surintegers. \\

\begin{customdefn}{9.4}
Let $\dot\omega=(0,\omega)$, and $(0,\omega)^{(0,\alpha)}=(0,\omega^\alpha)$ for all $\alpha$. We then define
$$\mathbb{Z}_\infty=\{\ddot\sum_{i<n}\dot\omega^{\zeta_i}\ddot\times\mu_i:\zeta\in^n(\{0\}\times O_n)\wedge\zeta_0\neq\zeta_1\neq\dots\neq\zeta_{n-1}\wedge\mu\in^n(\omega\times\{0\}\cup\{0\}\times\omega)\}.$$ 
$\mathbb{Z}_\infty$ will be called the {\bf Surintegers}. Lowercase italic letters from the beginning of the alphabet $a,b,c,\dots$ will be used to denote members of the Surintegers, called {\bf surintegers}, unless otherwise noted. If $a=\ddot\sum_{i<n}\dot\omega^{\zeta_i}\ddot\times\mu_i$, we will call $\ddot\sum_{i<n}\dot\omega^{\zeta_i}\ddot\times\mu_i$ the {\bf surinteger normal form} of $a$. \\
\end{customdefn}

This definition looks technical, however the object constructed amounts to a class full of all possible Cantor normal forms with positive {\it and} negative integer coefficients. If we choose to view the ordinals as a 'long line' starting at $0$ and stretching off to the right with an ellipse preceding each new limit ordinal on the line as below: \\
\setlength{\unitlength}{.1cm}
\begin{picture}(100,20)(50,55)
\multiput(50,65)(25,0){7}{\line(20,0){15}}
\multiput(50,63)(25,0){7}{\line(0,0){4}}
\multiput(67,65)(25,0){7}\dots
\put(49,60)0 \\
\end{picture}
 \\Then what we have done by adding the possibility of negative coefficients on Cantor normal forms amounts to 'flipping' the line over $0$ and adding new 'negative' positions on the line prior to each limit ordinal, as follows: \\
\setlength{\unitlength}{.1cm}
\begin{picture}(100,20)(50,55)
\multiput(53,65)(25,0){7}{\line(20,0){15}}
\multiput(60,63)(25,0){7}{\line(0,0){4}}
\multiput(45,65)(25,0){8}\dots
\put(134,60)0 \\
\end{picture}

Note that at limit surintegers such as $\dot\omega$ there are $\omega$ new positions of the form $\dot\omega\ddot+(n,0)$ for all $n\in\omega$, at limit surintegers such as $\dot\omega^{(0,2)}$ there are $2\times\omega^2$ new positions of the form $\dot\omega^2\ddot+(m,0)\dot\omega\ddot\pm(0,n)$ for all $m,n\in\omega$, and in general at each new $+$-number $\delta>\omega$ we pick up $2\times\delta$ 'new positions' placed before $(0,\delta)\ddot\times(0,n)$ for all $n\in\omega$ -- this behavior is mirrored on the 'negative half' of $\mathbb{Z}_\infty$. \\  Further, observe that each Surinteger has two equivalent 'representations' already present under these definitions -- the 'Surinteger normal form' they are defined in terms of, and a unique ordered pair of Cantor normal forms sharing no mutual powers of $\omega$.  We make this equivalence explicit in the next theorem. \\

\begin{customcor}{9.5}
Suppose $\zeta\in^n(\{0\}\times O_n)$ and $\mu\in^n(\omega\times\{0\}\cup\{0\}\times\omega)$, with $_+\mu=\{(i,\mu_i):\mu_i\in\mathbb{Z}_\infty^+\}$ and $_-\mu=\{(i,\mu_i):\mu_i\in\mathbb{Z}_\infty^-\}$. Then 
$$\ddot\sum_{i<n}\dot\omega^{\zeta_i}\ddot\times\mu_i=\big(\dot\Upsigma_{_{i\in dmn_-\mu}}\omega^{2^{nd}\zeta_i}\dot\times1^{st}_-\mu_i,\dot\Upsigma_{_{i\in dmn_+\mu}}\omega^{2^{nd}\zeta_i}\dot\times2^{nd}_+\mu_i\big).$$
\end{customcor}
\begin{proof}
We first rename $_+\mu=\{(i_0,\mu_{i_0}),\dots,(i_m,\mu_{i_m})\}$ and define $_+\zeta=\{(0,\zeta_{i_0}),\dots,(m,\zeta_{i_m})\}$, and similarly rename $_-\mu=\{(j_0,\mu_{j_0}),\dots,(j_p,\mu_{j_p})\}$ and define $_-\zeta=\{(0,\zeta_{j_0}),\dots,(p,\zeta_{j_p})\}$. We then define $_+\mu'=\{(0,\mu_{i_0}),\dots,(m,\mu_{i_m})\}$ and $_-\mu'=\{(0,\mu_{i_0}),\dots,(p,\mu_{i_p})\}$, and observe that $$\ddot\sum_{i<n}\dot\omega^{\zeta_i}\ddot\times\mu_i=\ddot\sum_{j<p}\dot\omega^{_-\zeta_j}\ddot\times_-\mu'_j\ddot+\ddot\sum_{i<m}\dot\omega^{_+\zeta_i}\ddot\times_+\mu'_i.$$ But we then have that $$\ddot\sum_{j<p}\dot\omega^{_-\zeta_j}\ddot\times_-\mu'_j=(\dot\Upsigma_{_{i\in dmn_-\mu}}\omega^{2^{nd}\zeta_i}\dot\times1^{st}_-\mu_i,0)$$ and $$\ddot\sum_{i<m}\dot\omega^{_+\zeta_i}\ddot\times_+\mu'_i=(0,\dot\Upsigma_{_{i\in dmn_+\mu}}\omega^{2^{nd}\zeta_i}\dot\times2^{nd}_+\mu_i),$$ thus $$\ddot\sum_{j<p}\dot\omega^{_-\zeta_j}\ddot\times_-\mu'_j\ddot+\ddot\sum_{i<m}\dot\omega^{_+\zeta_i}\ddot\times_+\mu'_i=(\dot\Upsigma_{_{i\in dmn_-\mu}}\omega^{2^{nd}\zeta_i}\dot\times1^{st}_-\mu_i,0)\ddot+(0,\dot\Upsigma_{_{i\in dmn_+\mu}}\omega^{2^{nd}\zeta_i}\dot\times2^{nd}_+\mu_i)$$ $$=(\dot\Upsigma_{_{i\in dmn_-\mu}}\omega^{2^{nd}\zeta_i}\dot\times1^{st}_-\mu_i,\dot\Upsigma_{_{i\in dmn_+\mu}}\omega^{2^{nd}\zeta_i}\dot\times2^{nd}_+\mu_i),$$ completing the proof. \\
\end{proof}

Although the exact signifigance of the above corollary may not be immedately apparent, we hopefully elucidate the relationship proven with the following brief example and Corollary. \\\

\begin{customexmp}{9.6}
$$\dot\omega^{(0,5)}\ddot\times(0,4)\ddot+\dot\omega^{(0,4)}\ddot\times(2,0)\ddot+\dot\omega^{(0,2)}\ddot\times(7,0)\ddot+\dot\omega\ddot\times(0,3)\ddot+(1,0)$$ $$=(0,\omega^5)\ddot\times(0,4)\ddot+(0,\omega^4)\ddot\times(2,0)\ddot+(0,\omega^2)\ddot\times(7,0)\ddot+(0,\omega)\ddot\times(0,3)\ddot+(1,0)$$ $$=(0,\omega^5\times4)\ddot+(\omega^4\times2,0)\ddot+(\omega^2\times7,0)\ddot+(0,\omega\times3)\ddot+(1,0)$$ $$=(\omega^4\times2+\omega^2\times7+1,\omega^5\times4+\omega\times3)$$ $$=(\omega^4\dot\times2\dot+\omega^2\dot\times7\dot+1,\omega^5\dot\times4\dot+\omega\dot\times3).$$ \\
\end{customexmp}

\begin{customcor}{9.7}
$\mathbb{Z}_\infty\subset(O_n\times O_n)$
\end{customcor}
\begin{proof}
By Theorem 9.5, we have that $\mathbb{Z}_\infty\subseteq(O_n\times O_n)$. To establish that $\mathbb{Z}_\infty\neq(O_n\times O_n)$, consider that $(3,5)\notin\mathbb{Z}_\infty$. This completes the proof. \\
\end{proof}

We now define the ordering structure of $\mathbb{Z}_\infty$, which will be primarily based around checking the coefficient on the highest present power of $\omega$. \\

\begin{customdefn}{9.8}
\begin{enumerate}
\item $\mathbb{Z}_\infty^+=\{a:1^{st}a\leq2^{nd}a\}$. $\mathbb{Z}_\infty^+$ will be called the {\bf postive} Surintegers.
\item $\mathbb{Z}_\infty^-=\{a:2^{nd}a<1^{st}a\}$. $\mathbb{Z}_\infty^-$ will be called the {\bf negative} Surintegers.
\item $\dot\leq=\{(a,b):(a\in\mathbb{Z}_\infty^-\wedge b\in\mathbb{Z}_\infty^+)\vee(a\in\mathbb{Z}_\infty^+\wedge b\in\mathbb{Z}_\infty^+\wedge[2^{nd}a<2^{nd}b\vee(2^{nd}a=2^{nd}b\wedge1^{st}b\leq1^{st}a)])\vee(a\in\mathbb{Z}_\infty^-\wedge b\in\mathbb{Z}_\infty^-\wedge[1^{st}b<1^{st}a\vee(1^{st}a=1^{st}b\wedge2^{nd}a\leq2^{nd}b)])\}$. We will write $a\dot\leq b$ iff $(a,b)\in\dot\leq$. \\
\end{enumerate}
\end{customdefn}

Thus we see that the ordering structure of $\mathbb{Z}_\infty$ amounts to interpreting the right coordinate of an ordered pair as the 'positive' part of a surinteger, and the left coordinate as the 'negative' part of a surinteger. If the positive part is larger than or equal to the negative part it is a positive surinteger; if the opposite is true, it is a negative surinteger. Note that $(0,0)$ is positive under this definition, and $\mathbb{Z}_\infty^+\cap\mathbb{Z}_\infty^-$ is empty. \\ We now leverage the duality proven in Corollary {\it 9.5} to express an equialent condition for a surinteger being positive or negative based on its normal form expansion, rather than its coordinate form. \\

\begin{customcor}{9.9}
$\mathbb{Z}_\infty=\mathbb{Z}_\infty^+\cup\mathbb{Z}_\infty^-.$
\end{customcor}
\begin{proof}
We have by definition {\it 9.8} that $\mathbb{Z}_\infty^+\subseteq\mathbb{Z}_\infty$ and $\mathbb{Z}_\infty^-\subseteq\mathbb{Z}_\infty$, thus $\mathbb{Z}_\infty^+\cup\mathbb{Z}_\infty^-\subseteq\mathbb{Z}_\infty$. Suppose $a\in\mathbb{Z}_\infty$, so $a=(\alpha,\beta)$ for some $\alpha,\beta\in O_n$ by Corollary {\it 9.7}. Since $\leq$ is a total ordering on $O_n$, either $\alpha\leq\beta$ or $\beta<\alpha$, thus $a\in\mathbb{Z}_\infty^+$ or $a\in\mathbb{Z}_\infty^-$ respectively. Since $a$ was arbitrary in $\mathbb{Z}_\infty$ we have that $\mathbb{Z}_\infty\subseteq(\mathbb{Z}_\infty^+\cup\mathbb{Z}_\infty^-)$, thus $\mathbb{Z}_\infty=\mathbb{Z}_\infty^+\cup\mathbb{Z}_\infty^-$. This completes the proof. \\
\end{proof}

\begin{customdefn}{9.10}
Suppose $\zeta\in^n(\{0\}\times O_n)$, and $a=\ddot\sum_{i<n}\dot\omega^{\zeta_i}\ddot\times_\pm n_i$ is the surinteger normal form of $a$. We then define $$\zeta^\uparrow=\sup rng\zeta,$$ and $_\pm n^\uparrow$ to be the corresponding coefficient in the normal form of a. We then define $$a^\uparrow=\dot\omega^{\zeta^\uparrow}\ddot\times _\pm n^\uparrow.$$ $a^\uparrow$ will be called the {\bf maximum representative of $a$}. We then define $$\zeta^{\uparrow-1}=\sup(rng\zeta\sim\zeta^\uparrow)$$ $$\vdots$$ $$\zeta^{\uparrow-m}=\sup(rng\zeta\sim\{\zeta^{\uparrow-i}\}_{i=0}^{m-1}),\ 1\leq m<n,$$ where $\zeta^{\uparrow-0}=\zeta^\uparrow$. We accordingly define $_\pm n^{\uparrow-m}$, $1\leq m<n$, to be the corresponding coefficient in the normal form of a, and define $$a^{\uparrow-m}=\dot\omega^{\zeta^{\uparrow-m}}\ddot\times_\pm n^{\uparrow-m},\ 1\leq m<n.$$ $a^{\uparrow-m}$ will be called the {\bf $m^{th}$ representative of $a$}, with $a^\uparrow$ sometimes referred to as the {\bf $0^{th}$ represetative of $a$}. \\
\end{customdefn}

We thusly see that the maximum representative (or $0^{th}$ representative) of a surinteger is simply the largest order term in its normal form expansion, and the $m^{th}$ representative is the $m^{th}$ largest order term in the normal form expansion (with the $0^{th}$ being the largest, $1^{st}$ being next largest, etc.). \\

\begin{customcor}{9.11}
For any $a\in\mathbb{Z}_\infty$,
\begin{enumerate}
\item $a\in\mathbb{Z}_\infty^+$ iff $a^\uparrow\in\mathbb{Z}_\infty^+$. 
\item For all $m$ such that $0\leq m<n$, $$a^{\uparrow-m}\in\mathbb{Z}_\infty^+\implies a^{\uparrow-m}=(0,2^{nd}a^{\uparrow-m}).$$
\item $a\in\mathbb{Z}_\infty^-$ iff $a^\uparrow\in\mathbb{Z}_\infty^-$.
\item For all $m$ such that $0\leq m<n$, $$a^{\uparrow-m}\in\mathbb{Z}_\infty^-\implies a^{\uparrow-m}=(1^{st}a^{\uparrow-m},0).$$
\end{enumerate}
\end{customcor}
\begin{proof}
{\it (1)} follows immedately from Corollary {\it 9.5}, observing that the ordered pair form of $a$ will satisfy $1^{st}a\leq2^{nd}a$ iff the cantor normal form of $2^{nd}a$ has a larger power of $\omega$ in it than the largest power of $\omega$ in $1^{st}a$, keeping in mind that $1^{st}a$ and $2^{nd}a$ share no powers of $\omega$. \\ For {\it (2)}, we observe that $$a^{\uparrow-m}\in\mathbb{Z}_\infty^+\implies_\pm n^{\uparrow-m}\in\mathbb{Z}_\infty^+\implies_\pm n^{\uparrow-m}=(0,n^{\uparrow-m}),$$ thus $$a^{\uparrow-m}=\dot\omega^{\zeta^{\uparrow-m}}\ddot\times_\pm n^{\uparrow-m}=(0,\omega^{\zeta^{\uparrow-m}})\ddot\times(0,n^{\uparrow-m})=(0,\omega^{\zeta^{\uparrow-m}}\times n^{\uparrow-m})=(0,2^{nd}a^{\uparrow-m}).$$ {\it (3)} follows from {\it (1)} and Corollary {\it 9.9}, and {\it (4)} then follows since $$a^{\uparrow-m}\in\mathbb{Z}_\infty^-\implies_\pm n^{\uparrow-m}\in\mathbb{Z}_\infty^-\implies_\pm n^{\uparrow-m}=(n^{\uparrow-m},0),$$ thus $$a^{\uparrow-m}=\dot\omega^{\zeta^{\uparrow-m}}\ddot\times_\pm n^{\uparrow-m}=(0,\omega^{\zeta^{\uparrow-m}})\ddot\times(n^{\uparrow-m},0)=(\omega^{\zeta^{\uparrow-m}}\times n^{\uparrow-m},0)=(1^{st}a^{\uparrow-m},0).$$ This completes the proof. \\
\end{proof}

\begin{customthm}{9.12}
$\dot\leq$ is a total ordering on $\mathbb{Z}_\infty$.
\end{customthm}
\begin{proof}
Suppose $a\dot\leq b$ and $b\dot\leq c$. By Corollary {\it 9.9}, we have four possible options to work with: 
\begin{enumerate}
\item $a,b,c\in\mathbb{Z}_\infty^-$, 
\item $a,b\in\mathbb{Z}_\infty^-$ and $c\in\mathbb{Z}_\infty^+$, 
\item $a\in\mathbb{Z}_\infty^-$ and $b,c\in\mathbb{Z}_\infty^+$, or 
\item $a,b,c\in\mathbb{Z}_\infty^+$. 
\end{enumerate}
Any other combination would give a contradiction, since (for example) if $a,b\in\mathbb{Z}_\infty^+$ and $c\in\mathbb{Z}_\infty^-$ then since $b\dot\leq c$ we have that $b,c\in\mathbb{Z}_\infty^+$ or $b,c\in\mathbb{Z}_\infty^-$ or $b\in\mathbb{Z}_\infty^-$ and $c\in\mathbb{Z}_\infty^+$, any of which is a contradiction since $\mathbb{Z}_\infty^+\cap\mathbb{Z}_\infty^-=0$. Suppose $(1)$ holds; we then have four possible sub-options:
\begin{itemize}
\item $1^{st}c<1^{st}b<1^{st}a\implies1^{st}c<1^{st}a\implies a\dot\leq c$.
\item $1^{st}c<1^{st}b=1^{st}a$, and $2^{nd}a\leq2^{nd}b\implies1^{st}c<1^{st}a\implies a\dot\leq c$.
\item $1^{st}c=1^{st}b<1^{st}a$, and $2^{nd}b\leq2^{nd}c\implies1^{st}c<1^{st}a\implies a\dot\leq c$.
\item $1^{st}c=1^{st}b=1^{st}a$, and $2^{nd}a\leq2^{nd}b\leq2^{nd}c\implies 2^{nd}a\leq2^{nd}c\implies a\dot\leq c$.
\end{itemize}
We are thusly satisfied that $\dot\leq$ is transitive in this case. If $(2)$ or $(3)$ hold then $a\in\mathbb{Z}_\infty^-$ and $c\in\mathbb{Z}_\infty^+$, thus $a\dot\leq c$ and consequently $\dot\leq$ is transitive. The case if $(4)$ holds is entirely symmetric to the first case. This completes the proof that $\dot\leq$ is transitive. \\ Now suppose that $a\dot\leq b$ and $b\dot\leq a$ -- we then have a few cases to work with:
\begin{enumerate}
\item $a\in\mathbb{Z}_\infty^+\wedge b\in\mathbb{Z}_\infty^+\wedge 2^{nd}a=2^{nd}b$, and $1^{st}a\leq1^{st}b\wedge1^{st}b\leq1^{st}a\implies1^{st}a=1^{st}b\implies a=b$.
\item $a\in\mathbb{Z}_\infty^-\wedge b\in\mathbb{Z}_\infty^-\wedge 1^{st}a=1^{st}b$, and $2^{nd}b\leq2^{nd}a\wedge2^{nd}a\leq2^{nd}b\implies2^{nd}a=2^{nd}b\implies a=b$.
\end{enumerate}
Any other placements give contradictions in the same fashion as above, since $\mathbb{Z}_\infty^+\cap\mathbb{Z}_\infty^-=0$. This completes the proof that $\dot\leq$ is antisymmetric, and this together with the fact that it is transitive proves that $\dot\leq$ is an ordering with field $\mathbb{Z}_\infty\times\mathbb{Z}_\infty$. \\ We now prove that $\dot\leq$ is a total ordering; suppose $a,b\in\mathbb{Z}_\infty$. We then have four possible cases:
\begin{enumerate}
\item $1^{st}a\leq2^{nd}a\wedge1^{st}b\leq2^{nd}b\implies a\in\mathbb{Z}_\infty^+\wedge b\in\mathbb{Z}_\infty^+$.
\item $1^{st}a\leq2^{nd}a\wedge2^{nd}b\leq1^{st}b\implies a\in\mathbb{Z}_\infty^+\wedge \big(b=0\vee b\in\mathbb{Z}_\infty^-\big)\implies b\dot\leq a$.
\item $1^{st}b\leq2^{nd}b\wedge2^{nd}a\leq1^{st}a\implies \big(a=0\vee a\in\mathbb{Z}_\infty^-\big)\wedge b\in\mathbb{Z}_\infty^+\implies a\dot\leq b$.
\item $2^{nd}a\leq1^{st}a\wedge2^{nd}b\leq1^{st}b\implies \big(a=0\vee a\in\mathbb{Z}_\infty^-\big)\wedge\big(b=0\vee b\in\mathbb{Z}_\infty^-\big)$.
\end{enumerate}
If $(1)$ holds, we observe that $2^{nd}a\leq2^{nd}b$ or $2^{nd}b\leq2^{nd}a$. If $2^{nd}a<2^{nd}b$ or $2^{nd}b<2^{nd}a$ then $a\dot\leq b$ or $b\dot\leq a$, respectively. If $2^{nd}a=2^{nd}b$ we observe that $1^{st}a\leq1^{st}b$ or $1^{st}b\leq1^{st}a$, thus $b\dot\leq a$ or $a\dot\leq b$, respectively. If $(4)$ holds and $a=(0,0)$ then $b\dot\leq a$, and if $b=(0,0)$ then $a\dot\leq b$. If $a\in\mathbb{Z}_\infty^-$ and $b\in\mathbb{Z}_\infty^-$ then the argument is symmetric to the first case. Since $a$ and $b$ were arbitrary in $\mathbb{Z}_\infty$, this completes the proof that $\dot\leq$ is a total ordering on $\mathbb{Z}_\infty$. \\
\end{proof}

We thusly begin to see the ordering properties of the integers emerging in $\mathbb{Z}_\infty$. We now define the expected additive and multiplicative identity, together with a convenient notational expansion, then proceed to define the binary operations we will be using in $\mathbb{Z}_\infty$. \\

\begin{customdefn}{9.13}
\begin{enumerate}
\item $\dot0=(0,0)$.
\item $\dot1=(0,1)$.
\item For all $\alpha$, we define $_+\alpha=(0,\alpha)$ and $_-\alpha=(\alpha,0)$.
\item For a coefficient sequence $\mu\in^n(\omega\times\{0\}\cup\{0\}\times\omega)$, we define $\{_\pm n_i\}=\{\mu_i\}=rng\mu$.
\item $\{\mp n_i\}=\{\pm n_i\}^{-1}$.
\item For all $(m,n)\in\omega\times\omega$, we define $$\overline{(m,n)}=\begin{cases}(0,n-m),&\ \text{if}\ m\leq n, \\(m-n,0),&\ \text{if}\ n<m.\end{cases}$$ \\
\end{enumerate}
\end{customdefn}

Note that part {\it (6)} of the preceding definition is equivalent to defining equivalence classes on $\omega\times\omega$ that partition it by the difference of coordinates, minding 'positive' and 'negative' differences, then choosing the element of each equivalence class with a zero coordinate.  This expression is simply more concise, and equivalent for our purposes. \\

\begin{customdefn}{9.14}
Suppose $\zeta\in^n(\{0\}\times O_n)$ and $\gamma\in^m(\{0\}\times O_n)$, with $a=\ddot\sum_{_{i<n}}\dot\omega^{\zeta_i}\ddot\times_\pm n'_i$ and \\ $b=\ddot\sum_{_{i<m}}\dot\omega^{\gamma_i}\ddot\times_\pm m'_i$ the surinteger normal forms of $a$ and $b$. We then define $\psi\in^\ell(\{0\}\times O_n)$ with \\ $n\cap m\leq\ell\leq m+n$ such that $rng\psi=rng\zeta\cup rng\gamma$, and $$a\hat+b=\ddot\sum_{_{i<\ell}}\dot\omega^{\psi_i}\ddot\times(\overline{n_i\ddot+m_i}),$$ where $$n_i=\begin{cases}n'_i,&\ \text{if}\ \psi_i\in rng\zeta \\ 0,&\ \text{if}\ \psi_i\notin rng\zeta,\end{cases}$$ and $$m_i=\begin{cases}m'_i,&\ \text{if}\ \psi_i\in rng\gamma \\ 0,&\ \text{if}\ \psi_i\notin rng\gamma.\end{cases}$$ We then define $$\hat+=\{\big((a,b),a\hat+b\big):a\in\mathbb{Z}_\infty\wedge b\in\mathbb{Z}_\infty\}.$$ $\hat+$ is called {\bf Surinteger addition}, or simply {\bf addition}.  $a\hat+b$ will be called the {\bf sum} of $a$ and $b$, and we will write $a\hat+b=c$ iff $\big((a,b),c\big)\in\hat+$. \\
\end{customdefn}

We thusly see that addition of surintegers is very closely analogous to Hessenberg addition of ordinals, except we may order the terms in the sum however we like since our base additive structure is commutative and we must use equivalence classes to 'select' the correct members of $O_n\times O_n$ that lie in $\mathbb{Z}_\infty$. \\

\begin{customcor}{9.15}
$\hat+$ is a function.
\end{customcor}
\begin{proof}
Let $a=\ddot\sum_{_{i<n}}\dot\omega^{\zeta_i}\ddot\times_\pm n'_i$ and $b=\ddot\sum_{_{i<m}}\dot\omega^{\gamma_i}\ddot\times_\pm m'_i$, and suppose $a\hat+b=c$ and $a\hat+b=d$. Then by definition {\it 9.14} we have that $c=\ddot\sum_{_{i<\ell}}\dot\omega^{\psi_i}\ddot\times(\overline{n_i\ddot+m_i})=d$. This completes the proof. \\
\end{proof}

We now prove two Lemmas which will be useful when proving that $\mathbb{Z}_\infty$ is a Ring. \\

\begin{customlem}{9.16}
For any $a,b,c\in(\omega\times\{0\}\cup\{0\}\times\omega)$, $$\overline{\big(a\ddot+\overline{(b\ddot+c)}\big)}=\overline{\big(\overline{(a\ddot+b)}\ddot+c\big)}.$$
\end{customlem}
\begin{proof}
Suppose $a,b,c\in(\omega\times\{0\}\cup\{0\}\times\omega)$. We first note that if $a,b,c\in\mathbb{Z}_\infty^+$ or $a,b,c\in\mathbb{Z}_\infty^-$, then this lemma holds trivially by the associativity of $+$ on ordinals. This gives us three remaining options to work with: 
\begin{enumerate} 
\item $a,b\in\mathbb{Z}_\infty^-\wedge c\in\mathbb{Z}_\infty^+$,
\item $a\in\mathbb{Z}_\infty^-\wedge b,c\in\mathbb{Z}_\infty^+$,
\item $b\in\mathbb{Z}_\infty^-\wedge a,c\in\mathbb{Z}_\infty^+$,
\end{enumerate}
observing that a proof the negation of the above three situations is symmetric to the a proof of the above. Suppose {\it (1)} holds; we then have that {\bf (1)} $1^{st}a+1^{st}b\leq2^{nd}c$ or {\bf (2)} $2^{nd}c\leq1^{st}a+1^{st}b.$ If {\bf (1)} holds, then $$\overline{\big(a\ddot+\overline{(b\ddot+c)}\big)}=\overline{a\ddot+(0,2^{nd}c-1^{st}b)}=\overline{(1^{st}a,2^{nd}c-1^{st}b)}=(0,2^{nd}c-1^{st}b-1^{st}a)=(0,2^{nd}c-1^{st}a-1^{st}b)$$ and $$\overline{\big(\overline{(a\ddot+b)}\ddot+c\big)}=\overline{(1^{st}a+1^{st}b,0)\ddot+c}=\overline{(1^{st}a+1^{st}b,2^{nd}c)}=\big(0,2^{nd}c-(1^{st}a+1^{st}b)\big)=(0,2^{nd}c-1^{st}a-1^{st}b),$$ thus $\overline{\big(a\ddot+\overline{(b\ddot+c)}\big)}=\overline{\big(\overline{(a\ddot+b)}\ddot+c\big)}$. If {\bf (2)} holds, then we first observe that $$\overline{\big(\overline{(a\ddot+b)}\ddot+c\big)}=\overline{(1^{st}a+1^{st}b,0)\ddot+c}=\overline{(1^{st}a+1^{st}b,2^{nd}c)}=(1^{st}a+1^{st}b-2^{nd}c,0).$$ Further, we have that {\bf (3)} $1^{st}b\leq2^{nd}c$ or {\bf (4)} $2^{nd}c\leq1^{st}b$. If {\bf (3)} holds, then $$\overline{\big(a\ddot+\overline{(b\ddot+c)}\big)}=\overline{a\ddot+(0,2^{nd}c-1^{st}b)}=\overline{(1^{st}a,2^{nd}c-1^{st}b)}=(1^{st}a-(2^{nd}c-1^{st}b),0)=(1^{st}a+1^{st}b-2^{nd}c,0),$$ thus $\overline{\big(a\ddot+\overline{(b\ddot+c)}\big)}=\overline{\big(\overline{(a\ddot+b)}\ddot+c\big)}$. If {\bf (4)} holds, then $$\overline{\big(a\ddot+\overline{(b\ddot+c)}\big)}=\overline{a\ddot+(1^{st}b-2^{nd}c,0)}=(1^{st}a+1^{st}b-2^{nd}c,0),$$ thus $\overline{\big(a\ddot+\overline{(b\ddot+c)}\big)}=\overline{\big(\overline{(a\ddot+b)}\ddot+c\big)}$ again. This completes the proof if ${\it (1)}$ above holds. \\ A proof of the case if {\it (2)} holds is symmetric to a proof of the case if {\it (1)} holds. \\ Now suppose {\it (3)} holds; then we have four possible options to work with, namely {\bf (5)} $2^{nd}a\leq1^{st}b$ and $2^{nd}c\leq1^{st}b$ or {\bf (6)} $1^{st}b\leq2^{nd}a$ and $2^{nd}c\leq1^{st}b$ and their negations. \\  If {\bf (5)} holds, then $$\overline{\big(a\ddot+\overline{(b\ddot+c)}\big)}=\overline{a\ddot+(1^{st}b-2^{nd}c,0)},$$ $$\overline{\big(\overline{(a\ddot+b)}\ddot+c\big)}=\overline{(1^{st}b-2^{nd}a,0)\ddot+c},$$ leaving us with two more options; {\bf (7)} $2^{nd}a+2^{nd}c\leq1^{st}b$ or {\bf (8)} $1^{st}b\leq2^{nd}a+2^{nd}c$. If {\bf (7)} holds, then $$\overline{\big(a\ddot+\overline{(b\ddot+c)}\big)}=\overline{a\ddot+(1^{st}b-2^{nd}c,0)}=(1^{st}b-2^{nd}a-2^{nd}c,0),$$ $$\overline{\big(\overline{(a\ddot+b)}\ddot+c\big)}=\overline{(1^{st}b-2^{nd}a,0)\ddot+c}=(1^{st}b-2^{nd}a-2^{nd}c,0),$$ thus $\overline{\big(a\ddot+\overline{(b\ddot+c)}\big)}=\overline{\big(\overline{(a\ddot+b)}\ddot+c\big)}$. If {\bf (8)} holds, then $$\overline{\big(\overline{(a\ddot+b)}\ddot+c\big)}=\overline{(1^{st}b-2^{nd}a,0)\ddot+c}=(0,2^{nd}a+2^{nd}c-1^{st}b),$$ $$\overline{\big(\overline{(a\ddot+b)}\ddot+c\big)}=\overline{(1^{st}b-2^{nd}a,0)\ddot+c}=(0,2^{nd}a+2^{nd}c-1^{st}b),$$ thus $\overline{\big(a\ddot+\overline{(b\ddot+c)}\big)}=\overline{\big(\overline{(a\ddot+b)}\ddot+c\big)}$ once again; this completes the proof if {\bf (5)} holds. A proof of the negation of {\bf (5)} is enirely symmetric. \\ Now suppose {\bf (6)} holds. Then $$\overline{\big(a\ddot+\overline{(b\ddot+c)}\big)}=\overline{a\ddot+(1^{st}b-2^{nd}c,0)}=(0,2^{nd}a+2^{nd}c-1^{st}b),$$ $$\overline{\big(\overline{(a\ddot+b)}\ddot+c\big)}=\overline{(0,2^{nd}a-1^{st}b)\ddot+c}=(0,2^{nd}a+2^{nd}c-1^{st}b),$$ thus $\overline{\big(a\ddot+\overline{(b\ddot+c)}\big)}=\overline{\big(\overline{(a\ddot+b)}\ddot+c\big)}$; this completes the proof if {\bf (6)} holds. A proof of the negation of {\bf (6)} is enirely symmetric. This completes the proof if {\it (3)} above holds, completing the proof overall. \\
\end{proof}

\begin{customlem}{9.17}
$\mathbb{Z}_\infty^+$ and $\mathbb{Z}_\infty^-$ are closed under $\hat+$.
\end{customlem}
\begin{proof}
Suppose $a,b\in\mathbb{Z}_\infty^+$, with $a=\ddot\sum_{i<n}\dot\omega^{\zeta_i}\ddot\times_\pm n_i$ and $b=\ddot\sum_{i<m}\dot\omega^{\gamma_i}\ddot\times_\pm m_i$. Then $a^\uparrow\in\mathbb{Z}_\infty^+$ and $b^\uparrow\in\mathbb{Z}_\infty^+$ by Corollary {\it 9.11}, and we then have that $$(a\hat+ b)^\uparrow=\begin{cases}a^\uparrow\ddot+b^\uparrow,&\ \text{if}\ \zeta^\uparrow=\gamma^\uparrow, \\ a^\uparrow,&\ \text{if}\ \gamma^\uparrow<\zeta^\uparrow, \\ b^\uparrow,&\ \text{if}\ \zeta^\uparrow<\gamma^\uparrow.\end{cases}$$ In any of the above three cases we have that $(a\hat+b)^\uparrow\in\mathbb{Z}_\infty^+$, keeping in mind that $$a^\uparrow\ddot+ b^\uparrow=(0,2^{nd}a^\uparrow)\ddot+(0,2^{nd}b^\uparrow)=(0,2^{nd}a^\uparrow+2^{nd}b^\uparrow)$$ by Lemma {\it 9.11}, thus $(a\hat+b)\in\mathbb{Z}_\infty^+$. The proof if $a,b\in\mathbb{Z}_\infty^-$ is entirely symmetric.  This completes the proof. \\
\end{proof}

We now define a notion of negation in $\mathbb{Z}_\infty$, something which will have no analogue in the ordinals. The process of negating a surinteger has two completely equivalent interpretations; we will first present the more concise of the two options. \\

\begin{customdefn}{9.18}
$$\hat-=\{(a,b):(2^{nd}a,1^{st}a)=b\}.$$ $\hat-$ will be called {\bf Surinteger negation}, or simply {\bf negation}. We will write $b=-a$ iff $(a,b)\in\hat-$. \\
\end{customdefn}

Thus we see that negating a surinteger amounts to switching its second and first coordinate. It is worth noting that this is entirely equivalent to switching the sign on all coefficients in the normal form expansion of a surinteger; we make this explicit with the next corollary. \\

\begin{customcor}{9.19}
$\hat-=\{(a,b):a=\ddot\sum_{_{i<n}}\dot\omega^{\zeta_i}\ddot\times_\pm n_i\wedge b=\ddot\sum_{_{i<n}}\dot\omega^{\zeta_i}\ddot\times_\mp n_i\}$. \\
\end{customcor}
\begin{proof}
This follows immedately as a consequence of Corollary {\it 9.5}. \\
\end{proof}

\begin{customcor}{9.20}
$\hat-$ is a function.
\end{customcor}
\begin{proof}
Suppose $(a,b)\in\hat-$ and $(a,c)\in\hat-$. Then by definition {\it 9.18} we have $$b=(2^{nd}a,1^{st}a)=c,$$ completing the proof. \\
\end{proof}

We now define the multiplicative structure in $\mathbb{Z}_\infty$. \\

\begin{customdefn}{9.21}
Let $a=\ddot\sum_{_{i<n}}\dot\omega^{\zeta_i}\ddot\times_\pm n_i$ and $b=\ddot\sum_{_{i<m}}\dot\omega^{\gamma_i}\ddot\times_\pm m_i$. We then define $\mathscr{A}=\{\zeta_i\ddot+\gamma_j\}$, and for each $a\in\mathscr{A}$ with $\zeta_i\ddot+\gamma_j=a,\dots,\zeta_p\ddot+\gamma_q=a$, let $$c_a=\overline{n_i\ddot\times m_j\ddot+\dots\ddot+n_p\ddot\times m_q}.$$ We then define a sequence $\alpha\in^\ell\mathscr{A}$ for some $\ell\in\omega$, and $$a\hat\times b=\ddot\sum_{i<\ell}\dot\omega^{\alpha_i}\ddot\times c_{\alpha_i}.$$ We then define $$\hat\times=\{\big((a,b),a\hat\times b\big):a\in\mathbb{Z}_\infty\wedge b\in\mathbb{Z}_\infty\}.$$ $\hat\times$ will be called {\bf Surinteger multiplication}, or simply {\bf multiplication}. We will write $a\hat\times b=c$ iff $\big((a,b),c\big)\in\hat\times$. \\
\end{customdefn}

Thus we see that the multiplicative structure in $\mathbb{Z}_\infty$ behaves essentially like polynomial multiplication with $\omega$ as the lone independent variable. \\

\begin{customcor}{9.22}
$\hat\times$ is a function.
\end{customcor}
\begin{proof}
Let $a=\ddot\sum_{_{i<n}}\dot\omega^{\zeta_i}\ddot\times_\pm n_i$ and $b=\ddot\sum_{_{j<m}}\dot\omega^{\gamma_j}\ddot\times_\pm m_j$, and suppose that $a\hat\times b=c$ and $a\hat\times b=d$. Also, let $\mathscr{A}$, $c_a$ and $\alpha$ be defined as in definition {\it 9.21}. By definition {\it 9.21} again, we then have that $$c=\ddot\sum_{i<\ell}\dot\omega^{\alpha_i}\ddot\times c_{\alpha_i}=d,$$ completing the proof. \\
\end{proof}

We now prove the last three theorems necessary prior to establishing that $\mathbb{Z}_\infty$ is a Ring. \\

\begin{customlem}{9.23}
For any $a,b,c\in(\omega\times\{0\}\cup\{0\}\times\omega)$, 
\begin{enumerate}
\item $\Big(a\in\mathbb{Z}_\infty^+\wedge b\in\mathbb{Z}_\infty^+\wedge c\in\mathbb{Z}_\infty^-\Big)\implies\Big(a\ddot\times b=(0,2^{nd}a\times2^{nd}b)\in\mathbb{Z}_\infty^+\wedge a\ddot\times c=(2^{nd}a\times1^{st}c,0)\in\mathbb{Z}_\infty^-\Big)$.
\item $a\ddot\times\overline{(b\ddot+c)}=\overline{(a\ddot\times b)\ddot+(a\ddot\times c)}$.
\end{enumerate}
\end{customlem}
\begin{proof}
For {\it (1)}, we simply observe that if $a=(0,\alpha)$, $b=(0,\beta)$, and $c=(\gamma,0)$, then $$a\ddot\times b=(0\times\beta+\alpha\times 0,0\times0+\alpha\times\beta)=(0,\alpha\times\beta)\in\mathbb{Z}_\infty^+,$$ and $$a\ddot\times c=(0\ddot\times0+\alpha\times\gamma,0\times\gamma+\alpha\times0)=(\alpha\times\gamma,0)\in\mathbb{Z}_\infty^-.$$ For {\it (2)}, we first suppose that $b,c\in\mathbb{Z}_\infty^+$ or $b,c\in\mathbb{Z}_\infty^-$. Then $b\ddot+c=(0,2^{nd}b+2^{nd}c)$ or $b\ddot+c=(1^{st}b+1^{st}c,0)$ respectively, thus $\overline{b\ddot+c}=b\ddot+c$ in either case. Consequently, if $a=(0,\alpha)$, then by part {\it (1)} we have $$a\ddot\times\overline{(b\ddot+c)}=a\ddot\times(b\ddot+c)=\big(0,\alpha\times(2^{nd}b+2^{nd}c)\big)=(0,\alpha\times2^{nd}b+\alpha\times2^{nd}c)$$ or $$a\ddot\times\overline{(b\ddot+c)}=a\ddot\times(b\ddot+c)=\big(\alpha\times(1^{st}b+1^{st}c),0\big)=(\alpha\times1^{st}b+\alpha\times1^{st}c,0)$$ respectively. Likewise, by part {\it (1)} we have that $a\ddot\times b=(0,\alpha\times2^{nd}b)$ and $a\ddot\times c=(0,\alpha\times2^{nd}c)$ or $a\ddot\times b=(\alpha\times1^{st}b,0)$ and $a\ddot\times c=(\alpha\times1^{st}c,0)$ respectively, thus $\overline{(a\ddot\times b)\ddot+(a\ddot\times c)}=(a\ddot\times b)\ddot+(a\ddot\times c)$ in either case. Consequently, by part {\it (1)} we have $$\overline{(a\ddot\times b)\ddot+(a\ddot\times c)}=(a\ddot\times b)\ddot+(a\ddot\times c)=(0,\alpha\times2^{nd}b)\ddot+(0,\alpha\times2^{nd}c)=(0,\alpha\times2^{nd}b+\alpha\times2^{nd}c)$$ or $$\overline{(a\ddot\times b)\ddot+(a\ddot\times c)}=(a\ddot\times b)\ddot+(a\ddot\times c)=(\alpha\times1^{st}b,0)\ddot+(\alpha\times1^{st}c,0)=(\alpha\times1^{st}b+\alpha\times1^{st}c,0)$$ respectively, thus $a\ddot\times\overline{(b\ddot+c)}=\overline{(a\ddot\times b)\ddot+(a\ddot\times c)}$. The proof if $a=(\alpha,0)$ is entirely symmetric. \\ Now suppose $b\in\mathbb{Z}_\infty^+$ and $c\in\mathbb{Z}_\infty^-$, and let $a=(0,\alpha)$. We then have that $1^{st}c\leq2^{nd}b$ or $2^{nd}b\leq1^{st}c$, thus $$a\ddot\times\overline{(b\ddot+c)}=a\ddot\times(0,2^{nd}b-1^{st}c)=\big(0,\alpha\times(2^{nd}b-1^{st}c)\big)=(0,2^{nd}b\times\alpha-1^{st}c\times\alpha)$$ or $$a\ddot\times\overline{(b\ddot+c)}=a\ddot\times(1^{st}c-2^{nd}b,0)=\big((1^{st}c-2^{nd}b)\times\alpha,0\big)=(1^{st}c\times\alpha-2^{nd}b\times\alpha,0)$$ respectively. Further, we observe that since $\times$ respects $\leq$ we have that $\alpha\times1^{st}c\leq\alpha\times2^{nd}b$ or $\alpha\times2^{nd}b\leq\alpha\times1^{st}c$ respectively, and consequently that $$\overline{(a\ddot\times b)\ddot+(a\ddot\times c)}=\overline{(\alpha\times1^{st}c,0)\ddot+(0,\alpha\times2^{nd}b)}=\overline{(\alpha\times1^{st}c,\alpha\times2^{nd}b)}=(0,2^{nd}b\times\alpha-1^{st}c\times\alpha)$$ or $$\overline{(a\ddot\times b)\ddot+(a\ddot\times c)}=\overline{(\alpha\times1^{st}c,0)\ddot+(0,\alpha\times2^{nd}b)}=\overline{(\alpha\times1^{st}c,\alpha\times2^{nd}b)}=(1^{st}c\times\alpha-2^{nd}b\times\alpha,0)$$ respectively, thus $a\ddot\times\overline{(b\ddot+c)}=\overline{(a\ddot\times b)\ddot+(a\ddot\times c)}$ in either case. The proof if $a=(\alpha,0)$ is entirely symmetric, as is the proof if $b\in\mathbb{Z}_\infty^-$ and $c\in\mathbb{Z}_\infty^+$.  This completes the proof. \\
\end{proof}

\begin{customdefn}{9.24}
For any $a,b\in\mathbb{Z}_\infty$ such that $a\neq b$, we define $$m_{a,b}^{\uparrow}=\inf\{m:a^{\uparrow-m}\neq b^{\uparrow-m}\}.$$ Since $a\neq b$, $\inf\{m:a^{\uparrow-m}\neq b^{\uparrow-m}\}$ is not empty and consequently $m_{a,b}^\uparrow$ is well defined. \\
\end{customdefn}

\begin{customcor}{9.25}
For all $a,b\in\mathbb{Z}_\infty$ such that $a\neq b$,
\begin{enumerate}
\item $m_{a,b}^\uparrow=0\iff a^\uparrow\neq b^\uparrow$.
\item $m_{a,b}^\uparrow>0\iff a^\uparrow=b^\uparrow$.
\item $m_{a,b}^\uparrow=n\iff[a^{\uparrow-n}\neq b^{\uparrow-n}\wedge\forall i(i<n\implies a^{\uparrow-i}=b^{\uparrow-i})]$.
\end{enumerate}
\end{customcor}
\begin{proof}
These follow immedately from Definition {\it 9.24}, keeping in mind that the last part of {\it (3)} holds vacuously if $n=0$. \\
\end{proof}

We thusly see that for any two $a,b\in\mathbb{Z}_\infty$ such that $a\neq b$, $m_{a,b}^\uparrow$ is a measure of the highest order term that is not the same in the normal forms of $a$ and $b$. \\

\begin{customlem}{9.26}
For any $a,b\in\mathbb{Z}_\infty$, $$a\dot<b\iff a^{\uparrow-m_{a,b}^\uparrow}\dot<b^{\uparrow-m_{a,b}^\uparrow}.$$
\end{customlem}
\begin{proof}
Suppose $a\dot<b$. If $a\in\mathbb{Z}_\infty^-$ and $b\in\mathbb{Z}_\infty^+$ then $a^\uparrow\neq b^\uparrow$ so $m_{a,b}^\uparrow=0$, and $a^\uparrow\in\mathbb{Z}_\infty^-$ and $b^\uparrow\in\mathbb{Z}_\infty^+$ by Corollary {\it 9.11 (1)}, thus $a^\uparrow\dot<b^\uparrow$ as desired. If $a,b\in\mathbb{Z}_\infty^+$, then $2^{nd}a<2^{nd}b$ or $2^{nd}a=2^{nd}b$ and $1^{st}b<1^{st}a$, and $a^\uparrow,b^\uparrow\in\mathbb{Z}_\infty^+$ by Corollary {\it 9.11 (1)}. If $2^{nd}a<2^{nd}b$, then by Corollary {\it 9.5} we have that $m_{a,b}^\uparrow=0$ and $a^\uparrow\dot<b^\uparrow$ once again. If $2^{nd}a=2^{nd}b$ and $1^{st}b<1^{st}a$, then again by Corollary {\it 9.11} we have that $m_{a,b}^\uparrow>0$ and $a^\uparrow=b^\uparrow$, and by Definition {\it 9.8} we have that $a^{\uparrow-m_{a,b}^\uparrow}\dot<b^{\uparrow-m_{a,b}^\uparrow}$, completing the proof if $a,b\in\mathbb{Z}_\infty^+$. The proof if $a,b\in\mathbb{Z}_\infty^-$ is entirely symmetric, completing the proof one way. \\ Now, suppose $a^{\uparrow-m_{a,b}^\uparrow}\dot<b^{\uparrow-m_{a,b}^\uparrow}$ and $m_{a,b}^\uparrow=0$. We then have that if $a^{\uparrow-m_{a,b}^\uparrow}\in\mathbb{Z}_\infty^-$ and $b^{\uparrow-m_{a,b}^\uparrow}\in\mathbb{Z}_\infty^+$, then $a\in\mathbb{Z}_\infty^-$ and $b\in\mathbb{Z}_\infty^+$ by Corollary {\it 9.11 (1)}, thus $a\dot<b$ as desired. If $a^{\uparrow-m_{a,b}^\uparrow},b^{\uparrow-m_{a,b}^\uparrow}\in\mathbb{Z}_\infty^+$, then by Corollary {\it 9.11 (1)} again we have that $a,b\in\mathbb{Z}_\infty^+$, and by Corollary {\it 9.5} we have that $2^{nd}a<2^{nd}b$, thus $a\dot<b$ again. The proof if $a^{\uparrow-m_{a,b}^\uparrow},b^{\uparrow-m_{a,b}^\uparrow}\in\mathbb{Z}_\infty^-$ is entirely symmetric, completing the proof if $m_{a,b}^\uparrow=0$. Now suppose $m_{a,b}^\uparrow>0$. By Corollary {\it 9.11} we have that $a,b\in\mathbb{Z}_\infty^+$ or $a,b\in\mathbb{Z}_\infty^-$, observing that the other options give us a contradiction by Corollary {\it 9.11 (1)} and {\it (3)}, since $m_{a,b}^\uparrow>0\Rightarrow a^\uparrow=b^\uparrow$ by Corollary {\it 9.25 (3)}. If $a,b\in\mathbb{Z}_\infty^+$, then by Corollary {\it 9.5} we have that $2^{nd}a<2^{nd}b$, thus $a\dot<b$ as desired.  The proof if $a,b\in\mathbb{Z}_\infty^-$ is entirely symmetric, completing the proof if $m_{a,b}^\uparrow>0$. This completes the proof overall.  \\
\end{proof}

We have now established all of the essential components of $\mathbb{Z}_\infty$ in a convienent form.  The next definitions will provide us with an exact definition for a ring in the context of MK class theory. \\

\begin{customdefn}{9.27}
A set $\mathfrak{R}$ is a {\bf commutative ordered ring with identity} iff $\mathfrak{R}=<\mathbb{R},\preceq_\mathbb{R},+_\mathbb{R},-_\mathbb{R},\times_\mathbb{R},0_\mathbb{R},1_\mathbb{R}>$ is a 7-termed sequence such that $\preceq_\mathbb{R}$ is a total ordering on $\mathbb{R}$, $+_\mathbb{R}:\mathbb{R}\times\mathbb{R}\rightarrow\mathbb{R}$, $-_\mathbb{R}:\mathbb{R}\rightarrow\mathbb{R}$, and $\times_\mathbb{R}:\mathbb{R}\times\mathbb{R}\rightarrow\mathbb{R}$ are all functions, and for all $a,b,c\in\mathbb{R}$:
\begin{enumerate}
\item $(a+b)+c=a+(b+c)$.
\item $a+b=b+a$.
\item $a+0=a$.
\item $a+(-a)=0$.
\item $(a\times b)\times c=a\times(b\times c)$.
\item $a\times b=b\times a$.
\item $a\times 1 =a$.
\item $a\times(b+c)=a\times b+a\times c$.
\item If $a\prec b$ and $c\prec d$, then $a+c\prec b+d$.
\item If $0\prec a$ and $0\prec b$, then $0\prec ab$.
\end{enumerate}
Note that $\preceq,+,-,\times,1$ and $0$ in {\it1-10} above all represent $\preceq_\mathbb{R},+_\mathbb{R},-_\mathbb{R},\times_\mathbb{R},1_\mathbb{R}$ and $0_\mathbb{R}$, respectively. We may refer to $\mathfrak{R}$ as simply a {\bf ring} when the other distinctions are not useful. \\
\end{customdefn}

\begin{customdefn}{9.28}
A proper class $\mathfrak{R}$ is a {\bf commutative ordered Ring with identity} iff \\ $\mathfrak{R}=\mathbb{R}\ \cup\preceq_\mathbb{R}\cup+_\mathbb{R}\cup-_\mathbb{R}\cup\times_\mathbb{R}$, $\preceq_\mathbb{R}$ is a total ordering on $\mathbb{R}$, $+_\mathbb{R}:\mathbb{R}\times\mathbb{R}\rightarrow\mathbb{R}$, $-_\mathbb{R}:\mathbb{R}\rightarrow\mathbb{R}$, and $\times_\mathbb{R}:\mathbb{R}\times\mathbb{R}\rightarrow\mathbb{R}$ are all functions, and there exist $0_\mathbb{R}\in\mathbb{R}$ and $1_\mathbb{R}\in\mathbb{R}	$ such that for all $a,b,c\in\mathbb{R}$:
\begin{enumerate}
\item $(a+b)+c=a+(b+c)$.
\item $a+b=b+a$.
\item $a+0=a$.
\item $a+(-a)=0$.
\item $(a\times b)\times c=a\times(b\times c)$.
\item $a\times b=b\times a$.
\item $a\times 1 =a$.
\item $a\times(b+c)=a\times b+a\times c$.
\item If $a\prec b$ and $c\prec d$, then $a+c\prec b+d$.
\item If $0\prec a$ and $0\prec b$, then $0\prec ab$.
\end{enumerate}
Note that $\preceq,+,-,\times,1$ and $0$ in {\it1-10} above all represent $\preceq_\mathbb{R},+_\mathbb{R},-_\mathbb{R},\times_\mathbb{R},1_\mathbb{R}$ and $0_\mathbb{R}$, respectively.  We may refer to $\mathfrak{R}$ as simply a {\bf Ring} when the other distinctions are not useful. \\
\end{customdefn}

Accordingly, we will use 'ring' to denote a commutative ordered ring with identity that is a set, and 'Ring' to denote the same structure in proper class form. Further, we see that a ring may be 'separated out' into sets containing each piece of the structure using a sequence over the ordinal $7$ so that we may consider each piece of its structure separately in a straightforward fashion, while in order to legitimately meld together all of the necessary pieces of a Ring we must use the notion of a union to pile all of the necessary components in one class without putting the separate classes containing those components in any other class. We could not use a sequence for a Ring, since being a term in a sequence forces a class to be a set and would thusly give us contradictions. Note that for a ring we have $\mathbb{R}\in\mathfrak{R}$, while for a Ring we have $\mathbb{R}\subset\mathfrak{R}$. \\

\begin{customdefn}{9.29}
$\mathcal{Z}_\infty=\mathbb{Z}_\infty\cup\dot\leq\cup\hat+\cup\hat-\cup\hat\times$. $\mathcal{Z}_\infty$ will be called the {\bf Surintegers$_+$}. \\
\end{customdefn}

Thus wee see that the Surintegers$_+$ simply amount to the Surintegers equipped with their ordering and binary operations.  We now prove that this does indeed form a ring. \\

\begin{customthm}{9.30 -- Main Theorem 1}
$\mathcal{Z}_\infty$ is a Ring, with $0_\mathcal{Z}=\dot0$ and $1_\mathcal{Z}=\dot1$.
\end{customthm}
\begin{proof}
Suppose $\zeta\in^n(\{0\}\times O_n)$, $\gamma\in^m(\{0\}\times O_n)$, and $\psi\in^\ell(\{0\}\times O_n)$, and let $a=\ddot\sum_{_{i<n}}\dot\omega^{\zeta_i}\ddot\times_\pm n'_i$, \\ $b=\ddot\sum_{_{j<m}}\dot\omega^{\gamma_j}\ddot\times_\pm m'_j$, and $c=\ddot\sum_{_{k<\ell}}\dot\omega^{\psi_k}\ddot\times_\pm \ell'_k$. We then define $\rho\in^q(\{0\}\times O_n)$ with $n\cap m\leq q\leq n+m$, and $\phi\in^r(\{0\}\times O_n)$ with $m\cap\ell\leq r\leq m+\ell$, such that $rng\rho=rng\zeta\cup rng\gamma$ and $rng\phi=rng\gamma\cup rng\psi$. Finally, we define $\delta\in^s(\{0\}\times O_n)$ with $q\cap r\leq s\leq m+n+\ell$ such that $rng\delta=rng\rho\cup rng\phi$. \\ For {\it (1)}, we observe that $$a\hat+(b\hat+c)=a\hat+\Big(\ddot\sum_{_{i<r}}\dot\omega^{\phi_i}\ddot\times(\overline{m_i\ddot+\ell_i})\Big)=\ddot\sum_{_{i<s}}\dot\omega^{\delta_i}\ddot\times\big(\overline{n_i\ddot+\overline{(m_i\ddot+\ell_i)}}\big)$$ $$=\ddot\sum_{_{i<s}}\dot\omega^{\delta_i}\ddot\times\big(\overline{\overline{(n_i\ddot+m_i)}\ddot+\ell_i}\big)=\Big(\ddot\sum_{_{i<q}}\dot\omega^{\rho_i}\ddot\times(\overline{n_i\ddot+m_i})\Big)\hat+c=(a\hat+b)\hat+c,$$ where the third equality follows from Lemma {\it 9.14}. For {\it (2)}, we observe that $$a\hat+b=\ddot\sum_{_{i<q}}\dot\omega^{\rho_i}\ddot\times(\overline{n_i\ddot+m_i})=\ddot\sum_{_{i<q}}\dot\omega^{\rho_i}\ddot\times(\overline{m_i\ddot+n_i})=b\hat+a.$$ For {\it (3)}, we observe that the surinteger normal form of $\dot0$ is $\dot0=(0,0)$, thus $$a\hat+\dot0=\ddot\sum_{_{i<n}}\dot\omega^{\zeta_i}\ddot\times(_\pm n'_i\ddot+\dot0)=\ddot\sum_{_{i<n}}\dot\omega^{\zeta_i}\ddot\times_\pm n'_i=a.$$ For {\it (4)}, we observe that $$a\hat+(\hat-a)=\ddot\sum_{_{i<n}}\dot\omega^{\zeta_i}\ddot\times_\pm n'_i\hat+\ddot\sum_{_{i<n}}\dot\omega^{\zeta_i}\ddot\times_\mp n'_i=\ddot\sum_{_{i<n}}\dot\omega^{\zeta_i}\ddot\times(\overline{_\pm n'_i\ddot+_\mp n'_i})=\ddot\sum_{_{i<n}}\dot\omega^{\zeta_i}\ddot\times\dot0=\dot0.$$ This completes the proof that $\mathcal{Z}_\infty$ is an abelian group with identity under $\hat+$ and $\hat-$. \\ For {\it (5-8)}, as in Definition {\it 9.21} we let $\mathscr{A}=\{\zeta_i\ddot+\gamma_j\}$ and $\alpha\in^t\mathscr{A}$, $\mathscr{B}=\{\gamma_i\ddot+\psi_j\}$ and $\beta\in^u\mathscr{B}$, $\mathscr{C}=\{\zeta_i\ddot+\psi_j\}$ and $\pi\in^v\mathscr{C}$, and lastly $\mathscr{D}=\{\zeta_i\ddot+\gamma_j\ddot+\psi_k\}$ with $\varepsilon\in^w\mathscr{D}$, and $c_a$ be defined as in Definition {\it 9.21} for $\mathscr{A},\mathscr{B},\mathscr{C},$ and $\mathscr{D}$. We then observe that $$a\hat\times(b\hat\times c)=a\hat\times\ddot\sum_{i<u}\dot\omega^{\beta_i}\ddot\times c_{\beta_i}=\ddot\sum_{i<w}\dot\omega^{\varepsilon_i}\ddot\times c_{\varepsilon_i}=\ddot\sum_{i<t}\dot\omega^{\alpha_i}\ddot\times c_{\alpha_i}\hat\times c=(a\hat\times b)\hat\times c.$$ This completes the proof of {\it (5)}. \\ For {\it (6)}, we simply observe that $\{\zeta_i\ddot+\gamma_j\}=\{\gamma_j\ddot+\zeta_i\}$ and $\overline{(n_i\ddot+m_j)}=\overline{(m_j\ddot+n_i)}$ for all $i$ and $j$, thus $$a\hat\times b=\ddot\sum_{i<t}\dot\omega^{\alpha_i}\ddot\times c_{\alpha_i}=b\hat\times a.$$ This completes the proof of {\it (6)}. \\ For {\it (7)}, we observe that $\{\zeta_i\ddot+0\}=\{\zeta_i\}$ and $_\pm n_i\ddot\times\dot1=_\pm n_i$ for all $i$, thus $$a\hat\times\dot 1=\ddot\sum_{i<n}\dot\omega^{\zeta_i\ddot+\dot0}\ddot\times(_\pm n_i\ddot\times 1)=\ddot\sum_{i<n}\dot\omega^{\zeta_i}\ddot\times _\pm n_i=a.$$ This completes the proof of {\it (7)}, completing the proof that $\mathcal{Z}_\infty$ is a monoid under $\hat\times$. \\ For {\it (8)}, we first observe that $$a\hat\times(b\hat+c)=\Big(\ddot\sum_{i<n}\dot\omega^{\zeta_i}\ddot\times n'_i\Big)\hat\times\Big(\ddot\sum_{i<r}\dot\omega^{\phi_i}\ddot\times\overline{(m_i\ddot+\ell_i)}\Big),$$ and that $\{\zeta_i\ddot+\phi_j\}=\{\zeta_i\ddot+\gamma_j\}\cup\{\zeta_i\ddot+\psi_j\}=\mathscr{A}\cup\mathscr{C}$. Next, we define $\eta\in^f(\{0\}\times O_n)$ with $t\cap v\leq f\leq t+v$ such that $rng\eta=rng\alpha\cup rng\pi=\mathscr{A}\cup\mathscr{C}$, thus we have $$a\hat\times(b\hat+c)=\Big(\ddot\sum_{i<n}\dot\omega^{\zeta_i}\ddot\times n_i\Big)\hat\times\Big(\ddot\sum_{i<r}\dot\omega^{\phi_i}\ddot\times\overline{(m_i\ddot+\ell_i)}\Big)=\ddot\sum_{i<f}\dot\omega^{\eta_i}\ddot\times c_{\eta_i}.$$ Next, we observe that $$(a\hat\times b)\hat+(a\hat\times c)=\ddot\sum_{i<t}\dot\omega^{\alpha_i}\ddot\times c'_{\alpha_i}\hat+\ddot\sum_{i<v}\dot\omega^{\pi_i}\ddot\times c'_{\pi_i}=\ddot\sum_{i<f}\omega^{\eta_i}\ddot\times\overline{(c_{\alpha_i}\ddot+c_{\pi_i})},$$ thus a proof of {\it (8)} amounts to showing that $\overline{(c_{\alpha_i}\ddot+c_{\pi_i})}=c_{\eta_i}$. We first note that $$c_{\eta_i}=\overline{\ddot\sum_{i<n^*}n_i\ddot\times\overline{(m_i\ddot+\ell_i)}},$$ where we have re-indexed the elements of the ranges of $n,m,$ and $\ell$ as necessary, and $n^*\in\omega$. Further, we note that $$\overline{(c_{\alpha_i}\ddot+c_{\pi_i})}=\overline{\ddot\sum_{i<n^*}\overline{(n_i\ddot\times m_i)\ddot+(n_i\ddot\times\ell_i)}}=\overline{\ddot\sum_{i<n^*}n_i\ddot\times\overline{(m_i\ddot+\ell_i)}}$$ by lemma {\it 9.20 (2)}, thus $c_{\eta_i}=\overline{(c_{\alpha_i}\ddot+c_{\pi_i})}$ as desied. This completes the proof of {\it (8)}, completing the proof that $\mathbb{Z}_\infty$ is a Ring under $\hat+$, $\hat-$, and $\hat\times$. \\ For {\it (9)}, suppose $a\dot<b$ and $c\dot<d$. By Lemma {\it 9.26} we then have that $$a^{\uparrow-m_{a,b}^\uparrow}\dot<b^{\uparrow-m_{a,b}^\uparrow}$$ and $$c^{\uparrow-m_{c,d}^\uparrow}\dot<d^{\uparrow-m_{c,d}^\uparrow}.$$ Suppose $m_{a,b}^\uparrow=0=m_{c,d}^\uparrow$. We observe that since $\mathbb{Z}_\infty^+$ and $\mathbb{Z}^-$ are closed under $\hat+$ by Lemma {\it 9.17}, we have that by Lemma {\it 9.11 (1)} and {\it (3)} $$a\in\mathbb{Z}_\infty^-\wedge b\in\mathbb{Z}_\infty^+\wedge c\in\mathbb{Z}_\infty^-\wedge d\in\mathbb{Z}_\infty^+\implies a^\uparrow\in\mathbb{Z}_\infty^-\wedge b^\uparrow\in\mathbb{Z}_\infty^+\wedge c^\uparrow\in\mathbb{Z}_\infty^-\wedge d^\uparrow\in\mathbb{Z}_\infty^+$$ $$\implies (a^\uparrow\hat+c^\uparrow)\in\mathbb{Z}_\infty^-\wedge (b^\uparrow\hat+d^\uparrow)\in\mathbb{Z}_\infty^+\implies a^\uparrow\hat+c^\uparrow\dot<b^\uparrow\hat+d^\uparrow\implies a\hat+c\dot<b\hat+d.$$ Now suppose that $a,b\in\mathbb{Z}_\infty^+$ and $c,d\in\mathbb{Z}_\infty^+$. We then have that $2^{nd}a<2^{nd}b$ and $2^{nd}c<2^{nd}d$ since $m_{a,b}^\uparrow=0=m_{c,d}^\uparrow$ and $a\dot<b$ and $c\dot<d$, and consequently we have that $2^{nd}a+2^{nd}c<2^{nd}b+2^{nd}d$, thus $a\hat+c\dot<b\hat+d$. The proof if $a,b,c,d\in\mathbb{Z}_\infty^-$ is entirely symmetric. This completes the proof if $m_{a,b}^\uparrow=0=m_{c,d}^\uparrow$. Now, suppose $m_{a,b}^\uparrow=m>0$ and $m_{c,d}^\uparrow=n>0$, and suppose that $a,b,c,d\in\mathbb{Z}_\infty^+$. We then have that $$a^{\uparrow-m}=(0,2^{nd}a^{\uparrow-m})\dot<b^{\uparrow-m}=(0,2^{nd}b^{\uparrow-m})$$ and $$c^{\uparrow-n}=(0,2^{nd}c^{\uparrow-n})\dot<d^{\uparrow-n}=(0,2^{nd}d^{\uparrow-n})$$ by Corollary {\it 9.11 (2)} and {\it (4)} and Lemma {\it 9.26}, thus $$a^{\uparrow-m}\ddot+c^{\uparrow-n}=(0,2^{nd}a^{\uparrow-m}+2^{nd}c^{\uparrow-n})\dot<(0,2^{nd}b^{\uparrow-m}+2^{nd}d^{\uparrow-n})=b^{\uparrow-m}\ddot+d^{\uparrow-n}.$$ Further, we observe that $$m_{a\hat+c,b\hat+d}^\uparrow=m_{a,b}^\uparrow+m_{c,d}^\uparrow=m+n,$$ thus $$(a\hat+c)^{\uparrow-m_{a\hat+c,b\hat+d}^\uparrow}=a^{\uparrow-m}\ddot+c^{\uparrow-n}\dot<b^{\uparrow-m}\ddot+d^{\uparrow-n}=(b\hat+d)^{\uparrow-m_{a\ddot+c,b\ddot+d}^\uparrow},$$ and consequently by Lemma {\it 9.26} we have that $a\hat+c\dot<b\hat+d$. This completes the proof if $a,b,c,d\in\mathbb{Z}_\infty^+$; the proof if $a,b,c,d\in\mathbb{Z}_\infty^-$ is entirely symmetric. Since $m>0$ and $n>0$, we can't have that $a\in\mathbb{Z}_\infty^-$ and $b\in\mathbb{Z}_\infty^+$ since this requires that $m=0$ by Corollary {\it 9.25}; a similar comment applies to $c$ and $d$. This completes the proof of (9), completing the proof that $\mathcal{Z}_\infty$ is an ordered abelian group under $\hat+$ and $\dot\leq$. \\ For {\it (10)}, suppose $a,b\in\mathbb{Z}_\infty^+\sim\{\dot0\}$, so that by Corollary {\it 9.11 (1)} we have that $a^\uparrow,b^\uparrow\in\mathbb{Z}_\infty^+\sim\{0\}$. By Lemma {\it 9.11 (2)}, we then have that $$a^\uparrow\ddot\times b^\uparrow=(0,2^{nd}a^\uparrow)\ddot\times(0,2^{nd}b^\uparrow)=(0,2^{nd}a^\uparrow\times2^{nd}b^\uparrow),$$ thus $a^\uparrow\ddot\times b^\uparrow\in\mathbb{Z}_\infty^+\sim\{0\}$. Further, we observe that $$(a\hat\times b)^\uparrow=a^\uparrow\ddot\times b^\uparrow,$$ thus $(a\hat\times b)^\uparrow\in\mathbb{Z}_\infty^+\sim\{0\}$ and consequently by Corrolary {\it 9.11 (1)} again we have that $a\hat\times b\in\mathbb{Z}_\infty^+\sim\{0\}$, completing the proof of {\it (10)}. This completes the proof that $\mathcal{Z}_\infty$ is an ordered monoid under $\hat\times$ and $\dot\leq$, and all of these facts taken together prove that $\mathbb{Z}_\infty$ is an ordered r/Ring under $\hat+$, $\hat\times$, and $\dot\leq$. \\ To prove that $\mathbb{Z}_\infty$ is a proper class, we simply observe that $(\{0\}\times O_n)\subset\mathbb{Z}_\infty^+\subset\mathbb{Z}_\infty$, and $<\alpha:(0,\alpha)\in O_n>$ is a function with domain $\{0\}\times O_n$ and range $O_n$, thus if $\{0\}\times O_n$ was a set then $O_n$ would also be a set by the axiom of substitution, a contradiction. We thusly conclude that $\{0\}\times O_n$ is a proper class, thus $\mathbb{Z}_\infty^+$ and $\mathbb{Z}_\infty$ are also a proper classes. This completes the proof that $\mathbb{Z}_\infty$ is a Ring. \\
\end{proof}

Having shown rigorously that $\mathcal{Z}_\infty$ has all the structure of a Ring, we now take some notational convieniences common to ring-theoretic discussions for the sake of conciseness. We make these changes explicit with the next definition. \\

\begin{customdefn}{9.31}
From now on, lowercase italic letters from the beginning of the alphabet $a,b,c,\dots$ will denote surintegers, while $-a,-b,-c,\dots$ will denote their negations -- we will also write $\dot\omega$ as simply $\omega$, $\dot0$ as $0$, $\dot1$ as $1$, and $\dot\leq$ as $\leq$. We will use lowercase italic letters from the middle of the alphabet $i,j,k,l,m,n,...$ to denote finite surintegers. Further, we will write the product $a\hat\times b$ simply as $ab$, the sum $a\hat+b$ as $a+b$, and we will identify ordinals $\zeta$ in general with their corresponding surinteger $(0,\zeta)$. Correspondingly, sums of the form $\ddot\sum_{i<n}\dot\omega^{\zeta_i}\ddot\times_\pm n_i$ will be written as $\sum_{i<n}\omega^{\zeta_i}\times_\pm n_i$, where we may omit the $_\pm$ if it is not relevant, leaving $\sum_{i<n}\omega^{\zeta_i}\times n_i$. \\
\end{customdefn}
 
Accordingly, $i,j,k,l,m,n,\dots$ will denote integers, while $a,b,c,d,e,f,\dots$ will denote surintegers, in general.  If we run into a situation where the two alphabets would collide due to a profusion of variables, we will simply begin priming at the beginning of the list again. \\ We now define a unique cyclic subring of $\mathbb{Z}_\infty$ and identify it with the Integers. \\

\begin{customdefn}{9.32}
$$\mathbb{Z}=\mathbb{Z}_\infty\upharpoonleft\upharpoonright\omega=\{a:1^{st}a<\omega\wedge2^{nd}a<\omega\}.$$ $\mathbb{Z}$ will be called the {\bf Integers}. Further, we define
\begin{enumerate}
\item $\leq_\omega\ =\ \leq\upharpoonleft\upharpoonright\mathbb{Z}$.
\item $+_\omega=+\upharpoonleft(\mathbb{Z}\times\mathbb{Z})$.
\item $-_\omega=-\upharpoonleft\mathbb{Z}$.
\item $\times_\omega=\times\upharpoonleft(\mathbb{Z}\times\mathbb{Z})$.
\item $\mathcal{Z}=\langle\mathbb{Z},\leq_\omega,+_\omega,-_\omega,\times_\omega,0,1\rangle$. \\
\end{enumerate}
\end{customdefn}

We will now proceed to generalize the process used above to form the Integers in order to to define a proper class of non-isomorpic ordered rings of somewhat arbitrary transfinite order-type and cardinality. $\mathbb{Z}$ will be the smallest in a cardinality sense and the simplest in an order-type sense, while $\mathbb{Z}_\infty$ will be (by far) the largest and most complicated in cardinality and order-type senses. \\

\begin{customdefn}{9.33}
For any transfinite $\times-number$ $\lambda$ as defined in {\it 8.7}, we define 
\begin{enumerate}
\item $\mathbb{Z}_\lambda=\mathbb{Z}_\infty\upharpoonleft\upharpoonright\lambda$. $\mathbb{Z}_\lambda$ is called the {\bf $\lambda$-Integers}.
\item $\leq_\lambda=\ \leq\upharpoonleft\upharpoonright\mathbb{Z}_\lambda$.
\item $+_\lambda=+_\infty\upharpoonleft\mathbb{Z}_\lambda\times\mathbb{Z}_\lambda$.
\item $-_\lambda=-_\infty\upharpoonleft\mathbb{Z}_\lambda$.
\item $\times_\lambda=\times_\infty\upharpoonleft\mathbb{Z}_\lambda\times\mathbb{Z}_\lambda$.
\item $\mathcal{Z}_\lambda=\langle\mathbb{Z}_\lambda,\leq_\lambda,+_\lambda,-_\lambda,\times_\lambda,0,1\rangle$.
\item $\mathcal{O}_\mathcal{Z}=\{\mathcal{Z}_\lambda:\lambda \ \text{is a}\ \times\text{-number}\}$. \\
\end{enumerate}
\end{customdefn}

\begin{customthm}{9.34 -- Main Theorem 2}
For any transfinite $\times$-number $\lambda$:
\begin{enumerate}
\item $\mathbb{Z}_\lambda$ is a set.
\item $\preceq_\lambda$ is a total ordering on $\mathbb{Z}_\lambda$.
\item $+_\lambda$ is a function.
\item $-_\lambda$ is a function.
\item $\times_\lambda$ is a function.
\item $\mathcal{Z}_\lambda$ is a ring.
\item $\mathcal{O}_\mathcal{Z}$ is a proper class.
\end{enumerate}
\end{customthm}
\begin{proof}
To see that $\mathbb{Z}_\lambda$ is a set, we consider that $\mathbb{Z}_\lambda\subset(\lambda\times\lambda)$ as a consequence of Corollary {\it 9.7}, and $\lambda\times\lambda\subset\mathcal{P}\lambda$, and since $\lambda$ is a set by Theorem {\it 2.12} and $\mathcal{P}\lambda$ is then a set by the power set axiom, $\mathbb{Z}_\lambda$ is also a set. This completes the proof of {\it (1)}. {\it (2-5)} follow immedately from Theorems {\it 9.12}, {\it 9.15}, {\it 9.20}, and {\it 9.22}, with closure following from the fact that $\lambda$ is a $\times$-number together with Definition 8.6 and Theorem 8.7. {\it (6)} then immedately follows from {\it (1-5)} together with Main Theorem 1 and Definition {\it 9.27}. To prove {\it (7)}, we simply observe that every transfinite cardinal number is a $\times$-number, thus $\mathbb{H}=<\alpha:\mathbb{Z}_\alpha\in \mathcal{O}_\mathcal{Z}>$ is a function with $dmn\mathbb{H}=\mathcal{O}_\mathcal{Z}$ and $Card\subseteq rng\mathbb{H}$. Since $Card$ is a proper class, if $\mathcal{O}_\mathcal{Z}$ was a set we would have a contradiction by the axiom of substitution, since $rng\mathbb{H}$ would be a set, thus we conclude that $\mathcal{O}_\mathcal{Z}$ is a proper class.  This completes the proof.  \\
\end{proof}

Accordingly, we see that every $\mathbb{Z}_\lambda$ is a unique ring that is \underline{not} ring-isomorphic to any other $\mathbb{Z}_{\lambda'}$, with $\mathbb{Z}_\infty$ being the only Ring and consequently not a member of $\mathcal{O}_\mathcal{Z}$. Note that $\omega$ is the smallest transfinite $\times$-number, while by Theorem {\it 7.7 (3)} we have that the next option for 'closing out' a ring is at $\omega^\omega$, observing that $\mercury_2=\dot\times$, and $\omega\mercury_3\omega=\uparrow(\omega,\omega)=\omega^\omega$. The next will be $$\omega^\omega\mercury_3\omega=\bigcup_{n<\omega}\omega^\omega\mercury_3n=\omega^{\bigcup_{n<\omega}\omega\mercury_2 n}=\omega^{\bigcup_{n<\omega}\omega\dot\times n}=\omega^{\omega^2},$$ so on and so forth -- it is a well known result that, in general, all $\times$-numbers ($\mercury_2$-numbers) will be of the form $$\omega^{\omega^\zeta}=\omega\mercury_3(\omega\mercury_3\zeta),\ \zeta\in O_n,$$ and in fact all numbers of this form are $\times$-numbers. The generalization of this fact given in Theorem {\it 7.8} states that all $\mercury_n$-numbers will be of the form $$\omega\mercury_{n\dot+1}(\omega\mercury_{n\dot+1}\zeta),\ \zeta\in O_n,$$ and all numbers of this form are $\mercury_n$-numbers.  \\Prior to demonstrating the unique characteristics of $\mathcal{Z}$, we must first define the notion of a finite sum in a ring. \\

\begin{customdefn}{9.35}
Let $\mathfrak{R}=<\mathbb{R},\preceq_\mathbb{R},+_\mathbb{R},-_\mathbb{R},\times_\mathbb{R},0_\mathbb{R},1_\mathbb{R}>$ be a ring. If $\mu\in^\omega\mathbb{R}$, we define $$\Sigma(\mu,n)=\mu_0+\mu_1+\dots+\mu_{n-1}.$$ We will write $\Sigma_{i<n}\mu_i$ instead of $\Sigma(\mu,n)$. \\
\end{customdefn}

We now define some language common to ring-theoretic discussions of the Integers, and show that $\mathbb{Z}$ as defined is the only ordered ring with all of these properties. \\

\begin{customdefn}{9.36}
Let $\mathfrak{R}=<\mathbb{R},\preceq_\mathbb{R},+_\mathbb{R},-_\mathbb{R},\times_\mathbb{R},0_\mathbb{R},1_\mathbb{R}>$ be a ring, and let $x$ and $y$ denote members of $\mathbb{R}$.
\begin{enumerate}
\item $\mathfrak{R}$ is {\bf discrete} $\iff \forall x\nexists y\big[x<y<x+1\big]$.
\item $\mathfrak{R}$ is {\bf cyclic} $\iff \forall x\exists\mu\exists n\big[\mu\in^\omega\{1\}\wedge \big(x=\Sigma_{i<n}\mu_i\vee x=-\Sigma_{i<n}\mu_i\big)\big]$. \\
\end{enumerate}
\end{customdefn}

Thus we see that a discrete ring is one in which there are no elements ordered between a given element and its additive composition with the multiplicative identity in the ring, and a cyclic ring is one in which any element can be obtained as a finite sum of multiplicative identities that we may or may not negate. We now show that only $\mathcal{Z}$ satisfies both of these properties, despite the abundance of rings we can create. \\

\begin{customthm}{9.37}
\begin{enumerate}
\item $\mathcal{Z}_\lambda$ is a discrete ring for all $\lambda$.
\item $\mathcal{Z}$ is a cyclic ring.
\item $\mathcal{Z}_\lambda$ is a cyclic ring $\iff \lambda=\omega$.
\end{enumerate}
\end{customthm}
\begin{proof}
That $\mathcal{Z}_\lambda$ is discrete for all $\lambda$ follows immedately from Theorem 2.24 {\it (6)}. That $\mathcal{Z}$ is cyclic follows immedately from the Definition 6.2 and Definition 8.1, since for any natural number $n$ we have that $n=\sum_{i<n}1$ and $\mathbb{Z}$ is composed of all members of the form $(0,n)$ or $(n,0)$. To establish {\it (3)} suppose $\lambda\neq\omega$, so $\omega<\lambda$ since $\omega$ is the smallest transfinite $\times$-number. We thusly have that $(0,\omega)\in\mathbb{Z}_\lambda$, and $\omega$ is not equal to any finite sum of natural numbers, thus $(0,\omega)$ is not the result of any finite sum of $\dot1$s. This completes the proof. \\
\end{proof}

\begin{customthm}{9.38 -- Main Theorem 3}
$\mathcal{Z}$ is the only discrete cyclic ring up to isomorphism.
\end{customthm}
\begin{proof}
By Theorem 9.23, $\mathcal{Z}$ is a discrete cyclic ring. Suppose that $\mathfrak{R}=<\mathbb{R},\preceq_\mathbb{R},+_\mathbb{R},-_\mathbb{R},\times_\mathbb{R},0_\mathbb{R},1_\mathbb{R}>$ is also a discrete cyclic ring. \\ Since $\mathfrak{R}$ is cyclic by assumption, we have that $$r\in\mathbb{R}\implies\exists\mu\exists n\big[\mu\in^\omega\{1_\mathbb{R}\}\wedge\big(\Sigma_{i<n}\mu_i=r\vee -\Sigma_{i<n}\mu_i=r\big)\big].$$ Further, we have that $$z\in\mathbb{Z}\implies\exists\rho\exists n\big[\rho\in^\omega\{\dot1\}\wedge\big(\Sigma_{i<n}\rho_i=z\vee-\Sigma_{i<n}\rho_i=z\big)\big].$$ We now construct an isomorphism $\phi:\mathbb{Z}\rightarrow\mathbb{R}$ as follows. For each $n\in\omega$ with $\mu\in^n\{1_\mathbb{R}\}$ and $\rho\in^n\{\dot1\}$ the unique functions with domain $n$ and range $\{1_\mathbb{R}\}$ and $\{\dot1\}$ respectively, we define: $$\phi=<\Sigma_{i<n}\mu_i:\Sigma_{i<n}\rho_i\in\mathbb{Z}>\cup<-\Sigma_{i<n}\mu_i:-\Sigma_{i<n}\rho_i\in\mathbb{Z}>.$$ $\phi$ is clearly an order preserving ring isomorphism between $\mathcal{Z}$ and $\mathfrak{R}$; since $\mathfrak{R}$ was an arbitrary discrete cyclic ring, this completes the proof. \\
\end{proof}

Note that we could have equivalently asserted that $\mathcal{Z}$ is the only discrete ring whose set of positive elements is well-ordered, however the isomorphism was easier to construct taking this view. We have now constructed all of the ordered rings that we wish to use in our construction. \\ As a last note before we move on to constructing a field of fractions for each one of them, I would like to mention that we can construct sets with binary operations that are 'larger' than $+$, $\times$ and $\uparrow$ in $\mercury$, and then find closure points for those sets as a direct consequence of Theorem {\it 7.7}. I will not explore such structures in this paper as they have little to do with fields in the classical sense, however they strike me as potentially interesting objects to explore and I may do so in a subsequent paper. \\  It is also worth noting that we can close out additive groups at each $+$-number like $\omega^2$, $\omega^3$, $\dots$, however we can only close out rings at $\times$-numbers like $\omega^{\omega}$, $\omega^{\omega^2}$, $\dots$.

\vspace{40mm}

\section{The Surrational Numbers}

We now define a field of fractions for $\mathbb{Z}_\infty$, and consequently obtain a proper class of unique fields from each $\mathbb{Z}_\lambda\in\mathcal{O}_\mathcal{Z}$ -- note that any field built from some $\lambda<\omega^+$ will consequently be countable. \\ We begin by defining the notion of being prime and coprime in $\mathbb{Z}_\infty$. \\

\begin{customdefn}{10.1} 
For all $a\in\mathbb{Z}_\infty$,
\begin{itemize}
\item $\mathcal{F}_a=\{b:\exists c\big[bc=a\big]\}.$ $\mathcal{F}_a$ will be called the {\bf class of factors of $a$}, and the members $b\in\mathcal{F}_a$ will be called {\bf factors of $a$}. 
\item We will say that $a$ is {\bf prime} iff $\mathcal{F}(a)=\{1,a\}$.
\end{itemize}
For all $\psi\in\mathbb{Z}_\infty\times\mathbb{Z}_\infty$, 
\begin{itemize}
\item$\mathcal{F}_\psi=\mathcal{F}_{1^{st}\psi}\cap\mathcal{F}_{2^{nd}\psi}$. We will call $\mathcal{F}_\psi$ the {\bf shared factors of $\psi$}.
\item We will say that $\psi$ is {\bf coprime} iff $\mathcal{F}_\psi=\{1\}$. \\
\end{itemize}
\end{customdefn}

\begin{customdefn}{10.2}
From now on, $\psi$ will denote a member of $\mathbb{Z}_\infty\times\mathbb{Z}^+_\infty\sim\{0\}$.
$$\mathbb{Q}_\infty=\{\psi:\mathcal{F}_\psi=\{1\}\}.$$ $\mathbb{Q}_\infty$ will be called the {\bf Surrational numbers}, and lowercase italic letters from the middle of the alphabet $p,q,r,s,t,\dots$ will denote members of $\mathbb{Q}_{\infty}$, called {\bf surrational numbers}. If $p=(a,b)$, we may also write $p=\frac{a}{b}$ and refer to $a$ as the {\bf numerator} of $p$ and $b$ as the {\bf denominator} of $p$. \\
\end{customdefn}

There is once again a nice visual interpretation for this newly defined structure. We first note that the above definition essentially amounts to a class containing all ordered pairs of coprime Surintegers. We could not use the full equivalence classes typically used when constructing the Rationals out of the Integers, since each equivalence class is a proper class in $\mathbb{Z}_\infty$. \\ Recall from the previous section that moving from the Ordinals to the Surintegers amounted to flipping the 'Ordinal line' over $0$, then adding new positions behind each limit ordinal. These new positions approached old positions, however they never 'reached them' --  for example, $n<\omega-n$ for all $n\in\mathbb{Z}^+$. We denoted this approach with ellipse between the new and old positions, and we will reproduce this diagram below. \\
\setlength{\unitlength}{.1cm}
\begin{picture}(100,20)(50,55)
\multiput(53,65)(25,0){7}{\line(20,0){15}}
\multiput(60,63)(25,0){7}{\line(0,0){4}}
\multiput(45,65)(25,0){8}\dots
\put(134,60)0 \\
\end{picture}

What we have done by moving to $\mathbb{Q}_\infty$ amounts to creating fractional Surintegers, with the first coordinate playing the role of a numerator and the second coordinate playing the role of a non-zero denominator. Visually, this adds density to the line in much the same sense as the rationals typically 'add density' to the Integers, however it also creates a host of new positions that reside inside of each gap on the Surinteger line, which we represented with the ellipses above. For example, we will see shortly that the previous observation $n<\omega-n$ for all $n\in\mathbb{Z}^+$ now generalizes to $n<\frac{\omega}{m}<\omega-n$ for all $m>1$ and $n\in\mathbb{Z}^+$. This fractional behaviour is duplicated at $\omega^\omega$ in the sense that $n\omega<\frac{\omega^\omega}{m\omega}<\omega^\omega-n\omega$ for all $m>1$ and $n\in\mathbb{Z}^+$, and again at $\omega^{\omega^2}$, etc. We now define a positive and negative subclass of $\mathbb{Q}_\infty$. \\

\begin{customdefn}{10.3}
\begin{enumerate}
\item $\mathbb{Q}_\infty^+=\{q:0\leq1^{st}q\}$. $\mathbb{Q}_\infty^+$ will be called the {\bf positive} Surrationals.
\item $\mathbb{Q}_\infty^-=\{q:1^{st}q<0\}$. $\mathbb{Q}_\infty^-$ will be called the {\bf negative} Surrationals.
\item $\hat0=(\dot0,\dot1)=\frac{\dot0}{\dot1}$.
\item $\hat1=(\dot1,\dot1)=\frac{\dot1}{\dot1}$. \\
\end{enumerate}
\end{customdefn}

Note that although it may seem odd to define a notion of 'positive' and 'negative' prior to defining an ordering, we are doing this primarily for two reasons. Firstly it is more convienient when expressing the ordering on $\mathbb{Q}_\infty$ to do it this way, and it is equivalent to first defining the ordering in terms of a 'lifted' ordering using $\mathbb{Z}_\infty$ and $\times_\infty$ then defining the positive and negative Surrationals as being ordered above or below $\hat0$. Secondarily however, and perhaps more importantly for the purposes of this development, defining the ordering in this way should match all classical intuitions on how we order the Rationals, and we know that this ordering is (up to isomorphism) the only one that makes them an ordered ring.  We now present a few definitions and theorems that will elucidate the relationship between $\mathbb{Q}_\infty$ and $\mathbb{Z}_\infty$, and $O_n$. \\

\begin{customthm}{10.4} \
\begin{enumerate}
\item $\mathbb{Q}_\infty=\mathbb{Q}_\infty^+\cup\mathbb{Q}_\infty^-$.
\item $\mathbb{Q}_\infty\subset\Big(\mathbb{Z}_\infty\times\mathbb{Z}_\infty^+\sim\{0\}\Big)\subset\Big(O_n\times O_n\times O_n\times O_n\Big)$.
\end{enumerate}
\end{customthm}
\begin{proof}
{\it (1)} follows immedately from the fact that $\leq$ is a total ordering on $\mathbb{Z}_\infty$, thus for all $q$ we have either $0\leq1^{st}q$ or $1^{st}q<0$. \\ To see the first containment in {\it (2)}, we observe that $\mathbb{Q}_\infty\subseteq\Big(\mathbb{Z}_\infty\times\mathbb{Z}_\infty^+\sim\{0\}\Big)$ by Definition {\it 10.2}. We then observe that $(2,4)\in\Big(\mathbb{Z}_\infty\times\mathbb{Z}_\infty^+\sim\{0\}\Big)$, but $\mathcal{F}(2)\cap\mathcal{F}(4)=\{1,2\}$ since $2\times2=4$, thus $(2,4)\notin\mathbb{Q}_\infty$ (note however that $(1,2)\in\mathbb{Q}_\infty$). Consequently, $\mathbb{Q}_\infty\subset\Big(\mathbb{Z}_\infty\times\mathbb{Z}_\infty^+\sim\{0\}\Big)$ as desired. The second containment then follows immedately from Corollary {\it 9.7}. This completes the proof. \\
\end{proof}

\begin{customdefn}{10.5}
For all $\phi\in\mathbb{Z}_\infty\times\mathbb{Z}_\infty^+\sim\{0\}$, we define $$\overline{\underline{\phi}}=\{\psi:\exists a\exists b\big[a\times1^{st}\psi=b\times1^{st}\phi\wedge a\times2^{nd}\psi=b\times2^{nd}\phi\big]\}.$$ \\
\end{customdefn}

Thus we see that for each $\psi$, $\overline{\underline\psi}$ is the class containing all ordered pairs of surintegers whose first and second coordinates are some shared multiple away from the first and second coordinates of $\psi$. \\

\begin{customthm}{10.6}
For all $\phi\in\mathbb{Z}_\infty\times\mathbb{Z}_\infty^+\sim\{0\}$, 
\begin{enumerate}
\item $\phi\in\overline{\underline\phi}$.
\item $\psi\in\overline{\underline{\phi}}\implies\overline{\underline\psi}=\overline{\underline\phi}$.
\end{enumerate}
\end{customthm}
\begin{proof}
The first part follows immedately, observing that for all $\phi\in\mathbb{Z}_\infty\times\mathbb{Z}_\infty^+\sim\{0\}$ we have $1^{st}\phi\times1=1^{st}\phi\times1$ and $2^{nd}\phi\times1=2^{nd}\phi\times1$. \\  For {\it (2)}, suppose that $\psi\in\overline{\underline{\phi}}$ with $a\times1^{st}\psi=b\times1^{st}\phi$ and $a\times2^{nd}\psi=b\times2^{nd}\phi$. By the symmetry of Definition {\it 10.5}, we then immedately have that $\phi\in\overline{\underline\psi}$. Suppose $\zeta\in\overline{\underline\phi}$, with $c\times1^{st}\zeta=d\times1^{st}\phi$ and $c\times2^{nd}\zeta=d\times2^{nd}\phi$.  We then have that $$d\times a\times1^{st}\psi=d\times b\times1^{st}\phi=b\times d\times1^{st}\phi=b\times c\times1^{st}\zeta,$$ and $$d\times a\times2^{nd}\psi=d\times b\times2^{nd}\phi=b\times d\times2^{nd}\phi=b\times c\times2^{nd}\zeta,$$ thus $\zeta\in\overline{\underline\psi}$. Since $\zeta$ was arbitrary in $\overline{\underline\phi}$, this completes the proof that $\overline{\underline\phi}\subseteq\overline{\underline\psi}$. The proof if $\gamma\in\overline{\underline\psi}$ is entirely symmetric, thus $\overline{\underline\psi}\subseteq\overline{\underline\phi}$; consequently, $\overline{\underline\phi}=\overline{\underline\psi}$. This completes the proof. \\
\end{proof}

Although the classes $\overline{\underline\psi}$ are equivalence classes, we will not prove this formally. The following two theorems will elucidate all of the structure we care about for each $\overline{\underline\psi}$. \\

\begin{customthm}{10.7}
For all $\psi$, there exists exactly one $q$ such that $q\in\overline{\underline \psi}$.
\end{customthm}
\begin{proof}
We first note that for all $a\in\mathbb{Z}_\infty$, $1\in\mathcal{F}_a$. If $\mathcal{F}_\psi=\{1\}$, then $\psi\in\mathbb{Q}_\infty$ and we are satisfied. Suppose $\{1\}\neq\mathcal{F}_\psi\subset\mathbb{Z}_\infty$. Since any polynomial with a finite number of terms, coefficients in $\mathbb{Z}$, and exponents in $\{0\}\times O_n$ (under Hessenberg addition) can only have a finite number of unique factors, $\mathcal{F}_\psi$ is a finite set. Since $\mathbb{Z}_\infty$ is totally ordered and $\mathcal{F}_\psi$ is a finite set, we may choose the greatest member of $\mathcal{F}_\psi$, call it $f^\uparrow$. We then have that $\exists\phi\in\mathbb{Z}_\infty\times\mathbb{Z}_\infty^+\sim\{0\}$ such that $$\frac{1^{st}\psi}{2^{nd}\psi}=\frac{f^\uparrow\times1^{st}\phi}{f^\uparrow\times2^{nd}\phi},$$ thus $\phi\in\overline{\underline{\psi}}$. Further, suppose that $\mathcal{F}_\phi\neq\{1\}$, and let $g^\uparrow>1$ be the greatest member of $\mathcal{F}_\phi$. We then have that $\exists\zeta\in\mathbb{Z}_\infty\times\mathbb{Z}_\infty^+\sim\{0\}$ such that $$\frac{1^{st}\psi}{2^{nd}\psi}=\frac{f^\uparrow\times1^{st}\phi}{f^\uparrow\times2^{nd}\phi}=\frac{f^\uparrow\times g^\uparrow\times1^{st}\zeta}{f^\uparrow\times g^\uparrow\times2^{nd}\zeta},$$ thus $f^\uparrow\times g^\uparrow\in\mathcal{F}_\psi$ and $f^\uparrow<f^\uparrow\times g^\uparrow$, a contradiction since $f^\uparrow$ is the greatest member of $\mathcal{F}_\psi$. We thusly conclude that $\mathcal{F}_\phi=\{1\}$, thus $\phi\in\mathbb{Q}_\infty$. Now, suppose that $q\in\overline{\underline\psi}$, so $\overline{\underline\psi}=\overline{\underline\phi}=\overline{\underline q}$ by Theorem 10.6 {\it (2)} and we thusly have that there exist some $a,b$ such that $a\times1^{st}\psi=b\times1^{st}q$ and $a\times2^{nd}\psi=b\times2^{nd}q$. If $a\neq1$ or $b\neq1$ we have a contradiction, since $a\in\mathcal{F}_\psi$ and $b\in\mathcal{F}_q$. We thusly conclude that $a=1=b$, and consequently we have that $$\frac{1^{st}\phi}{2^{nd}\phi}=\frac{1\times1^{st}\phi}{1\times2^{nd}\phi}=\frac{1\times1^{st}q}{1\times2^{nd}q}=\frac{1^{st}q}{2^{nd}q},$$ thus $\phi=q$ as desired. This completes the proof. \\
\end{proof}

We will use these theorems shortly to define the binary operations on $\mathbb{Q}_\infty$; we are now prepared to first prove one last theorem exactly defining the relationship between $\mathbb{Z}_\infty$ and $\mathbb{Q}_\infty$. \\

\begin{customcor}{10.8}
$$\bigcup_{q\in\mathbb{Q}_\infty}\overline{\underline{q}}=\Big(\mathbb{Z}_\infty\times\mathbb{Z}_\infty^+\sim\{0\}\Big).$$
\end{customcor}
\begin{proof}
It follows directly from Definition {\it 10.5} that $\overline{\underline q}\subseteq\Big(\mathbb{Z}_\infty\times\mathbb{Z}_\infty^+\sim\{0\}\Big)$ for all $q$, thus $$\bigcup_{q\in\mathbb{Q}_\infty}\overline{\underline{q}}\subseteq\Big(\mathbb{Z}_\infty\times\mathbb{Z}_\infty^+\sim\{0\}\Big).$$ By the previous theorem, for all $\psi$ there exists exactly one $q\in\overline{\underline\psi}$, and by Theorem {\it 10.6 (1-2)} we then have that $\psi\in\overline{\underline{q}}$, thus $$\Big(\mathbb{Z}_\infty\times\mathbb{Z}_\infty^+\sim\{0\}\Big)\subseteq\bigcup_{q\in\mathbb{Q}_\infty}\overline{\underline{q}}.$$ These two facts together establish that $$\bigcup_{q\in\mathbb{Q}_\infty}\overline{\underline{q}}=\Big(\mathbb{Z}_\infty\times\mathbb{Z}_\infty^+\sim\{0\}\Big),$$ completing the proof. \\
\end{proof}

Thus we see that the classes $\overline{\underline\psi}$ are equivalence classes that partition $\mathbb{Z}_\infty\times\mathbb{Z}_\infty^+\sim\{0\}$, each containing exactly one member of $\mathbb{Q}_\infty$; by definition, these are all the members of $\mathbb{Q}_\infty$. We are now prepared to define the additive structure in $\mathbb{Q}_\infty$. \\

\begin{customdefn}{10.9}
For all $p,q$, we define $p\breve+q$ to be the unique member of $\mathbb{Q}_\infty$ lying in $$\overline{\underline{(1^{st}p2^{nd}q+2^{nd}p1^{st}q,2^{nd}p2^{nd}q)}}=\overline{\underline{\frac{1^{st}p2^{nd}q+2^{nd}p1^{st}q}{2^{nd}p2^{nd}q}}}.$$ We then define $$\breve+=\{\big((p,q),p\breve+q):p\in\mathbb{Q}_\infty\wedge q\in\mathbb{Q}_\infty\}.$$ $\breve+$ will be called {\bf Surrational addition}, or simply addition, and we will write $p\breve+q=r$ iff $\big((p,q),r\big)\in\breve+$. Further, $p\breve+q$ will be called the {\bf Surrational sum} of $p$ and $q$, or simply the {\bf sum} of $p$ and $q$. This definition is justified by Theorem 10.7 and Corollary 10.8. \\
\end{customdefn}

\begin{customthm}{10.10}
$\breve+$ is a function.
\end{customthm}
\begin{proof}
Suppose that $p\breve+q=r$ and $p\breve+q=s$. We then have that $r,s\in\overline{\underline{(1^{st}p2^{nd}q+2^{nd}p1^{st}q,2^{nd}p2^{nd}q)}}$ and $r,s\in\mathbb{Q}$, thus $r=s$ by Theorem {\it 10.7}. This completes the proof. \\
\end{proof}

Thus we see that adding two surrational numbers simply amounts to adding them as though they were rational functions, then reducing the exponent and coefficient sequences as much as possible in the classical sense.  We now define negation in $\mathbb{Q}_\infty$. \\

\begin{customdefn}{10.11}
$$\breve-=\{(p,q):q=(-1^{st}p,2^{nd}p)\}.$$ $\breve-$ will be called {\bf Surrational negation} or simply {\bf negation}. We will write $-p=q$ iff $(p,q)\in\breve-$, and we will write $p-q=r$ iff $p\breve+(\breve-q)=r$. Further, we will call $p\breve-q$ the {\bf difference between $p$ and $q$}. \\
\end{customdefn}

Thus we see that negating a surrational number simply amounts to negating its numerator, just as with classical rational numbers built out of integers. \\

\begin{customthm}{10.12}
$\breve-$ is a function.
\end{customthm}
\begin{proof}
Suppose $-p=q$ and $-p=r$; then $q=(-1^{st}p,2^{nd}p)=r$. This completes the proof. \\
\end{proof}

We are now prepared to define the ordering on $\mathbb{Q}_\infty$; recall that we defined a set of positive surrational numbers in Definition {\it 10.3 (1)}, and that $\hat0\in\mathbb{Q}_\infty^+$. \\

\begin{customdefn}{10.13}
$$\preceq=\{(p,q):q\breve-p\in\mathbb{Q}_\infty^+\}.$$ We will write $p\preceq q$ iff $(p,q)\in\preceq$. \\
\end{customdefn}

The reader hopefully recognizes this definition from nigh every discussion of the Rationals as an ordered field. It is, up to isomorpism, the only ordering on $\mathbb{Q}_\infty$ that will make it an ordered field under the binary operations we will define.  We now prove that this forms a total ordering on $\mathbb{Q}_\infty$. \\

\begin{customthm}{10.14}
$\preceq$ is a total ordering on $\mathbb{Q}_\infty$.
\end{customthm}
\begin{proof}
Suppose $p\preceq q$ and $q\preceq r$. We then have that $q\breve-p\in\mathbb{Q}_\infty^+$ and $r\breve-q\in\mathbb{Q}_\infty^+$, thus 
\begin{enumerate}
\item $0\leq1^{st}(q\breve-p)=1^{st}q2^{nd}p-2^{nd}q1^{st}p\implies2^{nd}q1^{st}p\leq1^{st}q2^{nd}p$,
\item $0\leq1^{st}(r\breve-q)=1^{st}r2^{nd}q-2^{nd}r1^{st}q\implies 2^{nd}r1^{st}q\leq1^{st}r2^{nd}q$.
\end{enumerate}
Since $\mathbb{Z}_\infty$ is an ordered ring, we note that all denominator ($2^{nd}$) elements are positive and consequently if all numerator elements are also positive we can simply take the product of the right-hand terms above to obtain $$2^{nd}q1^{st}p2^{nd}r1^{st}q\leq1^{st}q2^{nd}p1^{st}r2^{nd}q\implies1^{st}p2^{nd}r\leq2^{nd}p1^{st}r$$ $$\implies 0\leq2^{nd}p1^{st}r-1^{st}p2^{nd}r=1^{st}(r\breve-p),$$ thus $p\preceq r$. If $1^{st}q$ negative then {\it (1)} above gives that $1^{st}p$ must also be negative. Suppose that $1^{st}r$ is positive; we then have that $1^{st}p2^{nd}r$ is negative and $1^{st}r2^{nd}p$ is positive, thus $1^{st}p2^{nd}r\leq2^{nd}p1^{st}r$ once again, thus $0\leq1^{st}r2^{nd}p-2^{nd}r1^{st}p=1^{st}(r\breve-p)$, thus $p\preceq r$. The case if $1^{st}p$ is negative and $1^{st}q,1^{st}r$ are positive is entirely identical. \\ Suppose $1^{st}r$ is negative, so by {\it (2)} above $1^{st}q$ is also negative and thusly by {\it (1)} $1^{st}p$ is negative as well. We then have that $2^{nd}q1^{st}p,2^{nd}r1^{st}q,1^{st}q2^{nd}p$, and $1^{st}r2^{nd}q$ are all negative, thus {\it (1)} with {\it (2)} above then give $$0<1^{st}q2^{nd}p1^{st}r2^{nd}q\leq2^{nd}q1^{st}p2^{nd}r1^{st}q,$$ where we have reversed the inequality since $2^{nd}q1^{st}p2^{nd}r1^{st}q$ is positive and $1^{st}q2^{nd}p1^{st}r2^{nd}q$ is positive. Canceling like terms then yields $$0>1^{st}r2^{nd}p\geq1^{st}p2^{nd}r,$$ where we switched the direction of the inequality once again because we canceled one negative term on each side, $1^{st}q$. We thusly have once again that $1^{st}p2^{nd}r\leq1^{st}r2^{nd}p$, thus $0\leq1^{st}r2^{nd}p-2^{nd}r1^{st}p=1^{st}(r\breve-p)$, thus $p\preceq r$. This completes the proof that $\preceq$ is transitive. \\ Now suppose that $p\preceq q$ and $q\preceq p$, thus $$0\leq1^{st}(q\breve-p)=1^{st}q2^{nd}p-2^{nd}q1^{st}p\implies2^{nd}q1^{st}p\leq1^{st}q2^{nd}p,$$ $$0\leq1^{st}(p\breve-q)=1^{st}p2^{nd}q-2^{nd}p1^{st}q\implies2^{nd}p1^{st}q\leq1^{st}p2^{nd}q,$$ and consequently we have that $$2^{nd}q1^{st}p=1^{st}q2^{nd}p\implies\exists m,n\big[m\times(1^{st}p,2^{nd}p)=n\times(1^{st}q,2^{nd}q)\big]\implies\overline{\underline{p}}=\overline{\underline{q}}.$$ Since each $\overline{\underline{\psi}}$ contains exactly one member of $\mathbb{Q}_\infty$ by Theorem {\it 10.7} and $p,q\in\mathbb{Q}_\infty$, we thusly have that $p=q$. This completes the proof that $\preceq$ is antisymmetric, completing the proof that it is an ordering. \\ Now, suppose $p,q\in\mathbb{Q}_\infty$. Since $\leq$ is a total ordering on $\mathbb{Z}_\infty$ we have that $1^{st}p2^{nd}q\leq2^{nd}p1^{st}q$ or $1^{st}q2^{nd}p\leq2^{nd}q1^{st}p$. If the former holds then $$1^{st}p2^{nd}q\leq2^{nd}p1^{st}q\implies0\leq2^{nd}p1^{st}q-1^{st}p2^{nd}q=1^{st}(q\breve-p)\implies p\preceq q,$$ and if the latter holds then $$1^{st}q2^{nd}p\leq2^{nd}q1^{st}p\implies0\leq2^{nd}q1^{st}p-1^{st}q2^{nd}p=1^{st}(p\breve-q)\implies q\preceq p.$$ Since $p$ and $q$ were arbitrary in $\mathbb{Q}_\infty$, $\preceq$ is a total ordering on $\mathbb{Q}_\infty$. This completes the proof. \\
\end{proof}

We thusly see that $\mathbb{Q}_\infty$ can be totally ordered in the fashion that we know orders all ordered fields, up to isomorphism. We now show that this ordering could have been defined immedately in terms of products of members of $\mathbb{Z}_\infty$. \\

\begin{customthm}{10.15}
$$\preceq=\{(p,q):2^{nd}q1^{st}p\leq1^{st}q2^{nd}p\}.$$
\end{customthm}
\begin{proof}
$$p\preceq q\iff 0\leq1^{st}(q\breve-p)=1^{st}q2^{nd}p-2^{nd}q1^{st}p\iff2^{nd}p1^{st}q\leq1^{st}q2^{nd}p.$$ \\
\end{proof}

Thusly we see that ordering the rationals can also be viewed as requiring that the mixed products from our ordered pairs of surintegers, treated as ratios, obey an inequality in the Surintegers.  We now define the multiplicative structure in $\mathbb{Q}_\infty$. \\

\begin{customdefn}{10.16}
For all $p,q,$ we define $p\breve\times q$ to be the unique member of $\mathbb{Q}_\infty$ lying in $$\overline{\underline{(1^{st}p1^{st}q,2^{nd}p2^{nd}q)}}=\overline{\underline{\frac{1^{st}p1^{st}q}{2^{nd}p2^{nd}q}}}.$$ We then define $$\breve\times=\{\big((p,q)p\breve\times q\big):p\in\mathbb{Q}_\infty\wedge q\in\mathbb{Q}_\infty\}.$$ $\breve\times$ will be called {\bf Surrational multiplication}, or simply multiplication, and we will write $p\breve\times q=r$ iff $\big((p,q),r\big)\in\breve\times$. Further, $p\breve\times q$ will be called the {\bf Surrational product} of $p$ and $q$, or simply the {\bf product} of $p$ and $q$. This definition is justified by Theorem 10.7 and Corollary 10.8. \\
\end{customdefn}

\begin{customthm}{10.17}
$\breve\times$ is a function.
\end{customthm}
\begin{proof}
Suppose $p\breve\times q=r$ and $p\breve\times q=s$. We then have that $r,s\in\overline{\underline{(1^{st}p1^{st}q,2^{nd}q2^{nd}q)}}$ and $r,s\in\mathbb{Q}_\infty$, thus $r=s$ by Theorem {\it 10.7}. This completes the proof. \\
\end{proof}

We thusly see that multiplication in $\mathbb{Q}_\infty$ behaves exactly like multiplication of rational numbers, up to the selection of a member in an equivalence class after we execute the operations. We now define the notion of inversion in $\mathbb{Q}_\infty$, something which will have no analogous operation in $\mathbb{Z}_\infty$. \\

\begin{customdefn}{10.18}
$$\breve\div=\{(p,q):\big[q=(2^{nd}p,1^{st}p)\iff p\in\mathbb{Q}_\infty^+\sim\{0\}\big]\wedge\big[q=(-2^{nd}p,-1^{st}p)\iff p\in\mathbb{Q}_\infty^-\big]\wedge\big[q=\hat0\iff p=\hat0\big]\}.$$ $\breve\div$ will be called {\bf Surrational inversion}, or simply {\bf inversion}. We will write $q=\frac{\hat1}{p}$ iff $(p,q)\in\breve\div$ and refer to $q$ as the {\bf inverse} of $p$, and we will write $p\breve\div q=r$ iff $p\breve\times(\breve\div q)=r$. Further, we will write $p\breve\div q$ as $\frac{p}{q}$, and refer to $\frac{p}{q}$ as the {\bf quotient of $p$ with respect to $q$}. \\
\end{customdefn}

This definition is a bit technical, however this is because we required our denominators to be strictly positive, so if $p\in\mathbb{Q}_\infty^-$ and consequently $1^{st}p<0$ we must negate both the numerator and denominator when inverting to legitimately obtain another member of $\mathbb{Q}_\infty$. A similar situation holds if $p=0$, hence the special case given for this situation. \\

\begin{customthm}{10.19}
$\breve\div$ is a function.
\end{customthm}
\begin{proof}
Suppose $\frac{p}{q}=r$ and $\frac{p}{q}=s$. Since $\leq$ is a total ordering on $\mathbb{Z}_\infty$, we have that $0\leq 1^{st}p$ or $1^{st}p<0$. If $0<1^{st}p$ then $p\in\mathbb{Q}_\infty^+$, thus $r=(2^{nd}p,1^{st}p)=s$. If $0=1^{st}p$ then $p=\hat0$, thus $r=\hat0=s$. If $1^{st}p<0$ then $p\in\mathbb{Q}_\infty^-$, thus $r=(-2^{nd}p,-1^{st}p)=s$ once again. This completes the proof. \\
\end{proof}

\begin{customdefn}{10.20}
For all $\overline{\underline\psi}$, we define $\overline{\overline\psi}$ to be the unique member of $\overline{\underline\psi}$ lying in $\mathbb{Q}_\infty$. This definition is justified by Theorem {\it 10.7} and Corollary {\it 10.8}. \\
\end{customdefn}

We have now constructed all of the essential components of $\mathbb{Q}_\infty$ in a convenient and concise form.  We now give an exact definition for a field in the context of MK class theory. \\

\begin{customdefn}{10.21}
A set $\mathfrak{F}=\langle\mathbb{F},\leq_\mathbb{F},+_\mathbb{F},-_\mathbb{F},\times_\mathbb{F},\div_\mathbb{F},0_\mathbb{F},1_\mathbb{F}\rangle$ is a {\bf ordered field} iff $\mathfrak{F}\sim\div_\mathbb{F}$ is an ordered commutative ring, $\div_\mathbb{F}:\mathbb{F}\rightarrow\mathbb{F}$ is a function, and for all $a\in\mathbb{F}$ there exists some $b\in\mathbb{F}$ such that $a\times_\mathbb{F}b=1_\mathbb{F}$. We will write this $b$ as $\div_\mathbb{F}(a)$, or $\frac{1_\mathbb{F}}{a}$. \\
\end{customdefn}

\begin{customdefn}{10.22}
A proper class $\mathfrak{F}=\mathbb{F}\cup\leq_\mathbb{F}\cup+_\mathbb{F}\cup-_\mathbb{F}\cup\times_\mathbb{F}\cup\div_\mathbb{F}$ with $0_\mathbb{F},1_\mathbb{F}\in\mathbb{F}$ is a {\bf ordered Field} iff $\mathfrak{F}\sim\div_\mathbb{F}$ is an ordered commutative Ring, $\div_\mathbb{F}:\mathbb{F}\rightarrow\mathbb{F}$ is a function, and for all $a\in\mathbb{F}$ there exists some $b\in\mathbb{F}$ such that $a\times_\mathbb{F}b=1_\mathbb{F}$. We will write this $b$ as $\div_\mathbb{F}(a)$, or $\frac{1_\mathbb{F}}{a}$. \\
\end{customdefn}

Thus we see that an ordered f/Field is simply an ordered r/Ring in which every element has a multiplicative inverse. \\

\begin{customdefn}{10.23}
$$\mathcal{Q}_\infty=\mathbb{Q}_\infty\cup\preceq\cup\breve+\cup\breve-\cup\breve\times\cup\breve\div.$$ $\mathcal{Q}_\infty$ will be called the Surrationals$_+$. \\
\end{customdefn}

We now prove that $\mathcal{Q}_\infty$ forms an ordered Field. \\

\begin{customthm}{10.24 -- Main Theorem 4}
$\mathcal{Q}_\infty$ is an ordered Field, with $0_{\mathbb{Q}_\infty}=\hat0$ and $1_{\mathbb{Q}_\infty}=\hat1$.
\end{customthm}
\begin{proof}
We will begin by establishing that $\mathbb{Q}_\infty$ satisfies the items on the list in Definition {\it 9.28}. For {\it (1)}, we observe that $$p\breve+(q\breve+r)=p\breve+\overline{\overline{(1^{st}q2^{nd}r+2^{nd}q1^{st}r,2^{nd}q2^{nd}r)}}=\overline{\overline{\big(1^{st}p(2^{nd}q2^{nd}r)+2^{nd}p(1^{st}q2^{nd}r+2^{nd}q1^{st}r),2^{nd}p(2^{nd}q2^{nd}r)\big)}}$$ $$=\overline{\overline{\big((1^{st}p2^{nd}q+2^{nd}p1^{st}q)2^{nd}r+(2^{nd}p2^{nd}q)1^{st}r,(2^{nd}p2^{nd}q)2^{nd}r\big)}}=\overline{\overline{(1^{st}p2^{nd}q+2^{nd}p1^{st}q,2^{nd}p2^{nd}q)}}\breve+r=(p\breve+q)\breve+r,$$ since $\mathcal{Z}_\infty$ forms an ordered Ring. This completes the proof of {\it (1)}. \\ For {\it (2)}, we simply observe that $$p\breve+q=\overline{\overline{(1^{st}p2^{nd}q+2^{nd}p1^{st}q,2^{nd}p2^{nd}q)}}=\overline{\overline{(1^{st}q2^{nd}p+2^{nd}q1^{st}p,2^{nd}q2^{nd}p)}}=q\breve+p,$$ since $\times$ and $+$ are commutative in $\mathbb{Z}_\infty$. This completes the proof of {\it (2)}. \\ For {\it (3)}, we observe that $$p\breve+\hat0=(1^{st}p\times 1+2^{nd}p\times0,2^{nd}p\times1)=(1^{st}p,2^{nd}p)=p.$$ This completes the proof of {\it (3)}. \\ For {\it (4)}, we observe that $$p\breve-p=\overline{\overline{(1^{st}p2^{nd}p-2^{nd}p1^{st}p,2^{nd}p2^{nd}p)}}=(0,1)=\hat0.$$ This completes the proof of {\it (4)}, completing the proof that $\mathcal{Q}_\infty$ is an Abelian group under $\breve+$ and $\breve-$. \\ For {\it (5)}, we observe that $$p\breve\times (q\breve\times r)=p\breve\times(1^{st}q1^{st}r,2^{nd}q2^{nd}r)=\big(1^{st}p(1^{st}q1^{st}r),2^{nd}p(2^{nd}q2^{nd}r)\big)$$ $$=\big((1^{st}p1^{st}q)1^{st}r,(2^{nd}p2^{nd})q2^{nd}r\big)=(1^{st}p1^{st}q,2^{nd}p2^{nd}q)\breve\times r=(p\breve\times q)\breve\times r.$$ This completes the proof of {\it (5)}. \\ For {\it (6)}, we observe that $$p\breve\times q=(1^{st}p1^{st}q,2^{nd}p2^{nd}q)=(1^{st}q1^{st}p,2^{nd}q2^{nd}p)=q\breve\times p.$$ This completes the proof of {\it (6)}. \\ For {\it (7)}, we observe that $$p\breve\times\hat1=(1^{st}p\times1,2^{nd}p\times1)=(1^{st}p,2^{nd}p)=p.$$ This completes the proof of {\it (7)}. \\ Prior to proving {\it (8)}, we will now prove that all elements in $\mathbb{Q}_\infty$ have multiplicative inverses. We observe that if $p\in\mathbb{Q}_\infty^+\sim\{\hat0\}$, then $$p\breve\times\frac{\hat1}{p}=\overline{\overline{(1^{st}p2^{nd}p,2^{nd}p1^{st}p)}}=(1,1)=\hat1.$$ This completes the proof that all elements of $\mathbb{Q}_\infty$ have multiplicative inverses, completing the proof that $\mathbb{Q}_\infty$ is an Abelian group under $\breve\times$ and $\breve\div$. \\ For {\it (8)}, we observe that $$p\breve\times(q\breve+r)=p\breve\times\overline{\overline{(1^{st}q2^{nd}r+2^{nd}q1^{st}r,2^{nd}q2^{nd}r)}}=\overline{\overline{\big(1^{st}p(1^{st}q2^{nd}r+2^{nd}q1^{st}r),2^{nd}p(2^{nd}q2^{nd}r)\big)}}$$ $$=\overline{\overline{\big(1^{st}p1^{st}q2^{nd}r+2^{nd}q1^{st}p1^{st}r,2^{nd}q2^{nd}p2^{nd}r\big)}},$$ and $$(p\breve\times q)\breve+(p\breve\times r)=(1^{st}p1^{st}q,2^{nd}p2^{nd}q)\breve+(1^{st}p1^{st}r,2^{nd}p2^{nd}r)$$ $$=\overline{\overline{\big((1^{st}p1^{st}q)(2^{nd}p2^{nd}r)+(2^{nd}p2^{nd}q)(1^{st}p1^{st}r),2^{nd}p2^{nd}q2^{nd}p2^{nd}r\big)}}$$ $$=\overline{\overline{\Big(2^{nd}p\big((1^{st}p1^{st}q)(2^{nd}r)+(2^{nd}q)(1^{st}p1^{st}r)\big),2^{nd}p(2^{nd}q2^{nd}p2^{nd}r)\Big)}}$$ $$=\overline{\overline{\big(1^{st}p1^{st}q2^{nd}r+2^{nd}q1^{st}p1^{st}r,2^{nd}q2^{nd}p2^{nd}r\big)}},$$ thus $p\breve\times(q\breve+r)=(p\breve\times q)\breve+(p\breve\times r)$ as desired. This completes the proof of {\it (8)}, completing the proof that $\mathcal{Q}_\infty$ is a f/Field under $\breve+,\breve-,\breve\times,$ and $\breve\div$. \\ For {\it (9-10)} we could appeal to a well known theorem in Field theory concerning the ordering we have chosen, however we will prove both directly instead as they are quite concise. For {\it (9)}, we observe that if $p\prec q$ and $r\prec s$, then $1^{st}p2^{nd}q<2^{nd}p1^{st}q$ and $1^{st}r2^{nd}s<2^{nd}r1^{st}s$ by Theorem {\it 10.15}. Consequently, we have $$1^{st}(p\breve+r)2^{nd}(q\breve+s)=(1^{st}p2^{nd}r+2^{nd}p1^{st}r)(2^{nd}q2^{nd}s)=(1^{st}p2^{nd}q)2^{nd}r2^{nd}s+2^{nd}p2^{nd}r(1^{st}r2^{nd}s)$$ $$<(2^{nd}p1^{st}q)2^{nd}r2^{nd}s+2^{nd}p2^{nd}q(2^{nd}r1^{st}s)=(2^{nd}p2^{nd}r)(1^{st}q2^{nd}s+2^{nd}q1^{st}s)=2^{nd}(p\breve+r)1^{st}(q\breve+s),$$ thus $p\breve+r\prec q\breve+s$ by Theorem {\it 10.15} once again. This completes the proof of {\it (9)}. \\ Finally, for {\it (10)} we simply observe that if $\hat0\prec p$ and $\hat0\prec q$, then $0<1^{st}p$ and $0<1^{st}q$, thus $$0\prec(1^{st}p1^{st}q,2^{nd}p2^{nd}q)=p\breve\times q$$ since $0\leq1^{st}p1^{st}q$.  This completes the proof of {\it (10)}, completing the proof that $\mathcal{Q}_\infty$ is an ordered f/Field. \\ To see that $\mathbb{Q}_\infty$ is a proper class, we simply observe that $dmn\mathbb{Q}_\infty=\mathbb{Z}_\infty$ and $\mathbb{Z}_\infty$ is a proper class, so if $\mathbb{Q}_\infty$ was a set then $dmn\mathbb{Q}$ would be a set and we would have a contradiction. This completes the proof that $\mathbb{Q}_\infty$ is a proper class, completing the proof that $\mathcal{Q}_\infty$ is an ordered Field. This completes the proof. \\
\end{proof}

Having shown rigorously that $\mathcal{Q}_\infty$ has all the structure of a Field, we now take some notational convieniences common to field-theoretic discussions for the sake of conciseness. We make these changes explicit with the next definition. \\

\begin{customdefn}{10.25}
From now on, lowercase italic letters from the middle of the alphabet $p,q,r,\dots$ will denote surintegers, while $-p,-q,-r,\dots$ will denote their negations and $\frac{1}{p},\frac{1}{q},\frac{1}{r},\dots$ will denote their inverses. We will also write $\preceq$ as $\leq$, $p\breve+q$ as $p+q$, $p\breve\times q$ as $pq$, $p\breve\times\frac{1}{q}$ as $\frac{p}{q}$, and we will identify surintegers $a$ in general with their corresponding surrational number $(a,1)$, and ordinals $\zeta$ in general with their corresponding surrational number $\big((0,\zeta),(0,1)\big)$. \\ Further, for $\times$-numbers $\alpha$ and $\beta$ such that $\alpha=1$ or $\alpha=\omega^{\omega^\zeta}$ and $\beta=\omega^{\omega^\gamma}$, we define $$\frac{\alpha}{\beta}=\frac{1}{\omega^{\omega^\gamma}}\equiv\omega^{-\omega^\zeta},$$ or $$\frac{\alpha}{\beta}=\frac{\omega^{\omega^\zeta}}{\omega^{\omega^\gamma}}\equiv\omega^{\omega^\gamma-\omega^\zeta}.$$ \\
\end{customdefn}

Accordingly $p,q,r,\dots$ will denote surrational numbers, $a,b,c,\dots$ will denote surintegers, and $i,j,k,l,m,n,\dots$ will denote natural numbers, in general. If we run into a situation where the alphabets would overlap due to a profusion of variables, we will simply begin priming at the beginning of the list again. Further, we now have that expressions like $\omega^{-\omega^2+\omega}$ can be legitimately interpreted as $\frac{\omega^\omega}{\omega^{\omega^2}}$, etc. \\ We now identify a unique Archimedean subfield of $\mathbb{Q}_\infty$ and identify it with the Rational numbers. \\

\begin{customdefn}{10.26}
$$\mathbb{Q}=\mathbb{Q}_\infty\upharpoonleft\upharpoonright \mathbb{Z}.$$ $\mathbb{Q}$ will be called the {\bf Rational numbers}. Further, we define
\begin{enumerate}
\item $\leq_\omega\ =\ \leq\upharpoonleft\upharpoonright\mathbb{Q}$.
\item $+_\omega=+\upharpoonleft(\mathbb{Q}\times\mathbb{Q})$.
\item $-_\omega=-\upharpoonleft\mathbb{Q}$.
\item $\times_\omega=\times\upharpoonleft(\mathbb{Q}\times\mathbb{Q})$.
\item $\mathcal{Q}=\langle\mathbb{Q},\leq_\omega,+_\omega,-_\omega,\times_\omega,0,1\rangle$. \\
\end{enumerate}
\end{customdefn}

We now proceed to generalize the process used above to form the Rational numbers in order to define a proper class of non-isomorphic ordered fields of somewhat arbitrary transfinite order-type and cardinality. $\mathbb{Q}$ will be the smallest in a cardinality sense and the simplest in an order-type sense, while $\mathbb{Q}_\infty$ will be (by far) the largest and most complicated in cardinality and order-type senses. \\

\begin{customdefn}{10.27}
For any transfinite $\times-number$ $\lambda$ as defined in {\it 8.7}, we define 
\begin{enumerate}
\item $\mathbb{Q}_\lambda=\mathbb{Q}_\infty\upharpoonleft\upharpoonright\mathbb{Z}_\lambda$. $\mathbb{Q}_\lambda$ is called the {\bf $\lambda$-Rational numbers}.
\item $\leq_\lambda=\ \leq\upharpoonleft\upharpoonright\mathbb{Q}_\lambda$.
\item $+_\lambda=+_\infty\upharpoonleft\mathbb{Q}_\lambda\times\mathbb{Q}_\lambda$.
\item $-_\lambda=-_\infty\upharpoonleft\mathbb{Q}_\lambda$.
\item $\times_\lambda=\times_\infty\upharpoonleft\mathbb{Q}_\lambda\times\mathbb{Q}_\lambda$.
\item $\mathcal{Q}_\lambda=\langle\mathbb{Q}_\lambda,\leq_\lambda,+_\lambda,-_\lambda,\times_\lambda,0,1\rangle$.
\item $\mathcal{O}_\mathcal{Q}=\{\mathcal{Q}_\lambda:\lambda \ \text{is a}\ \times\text{-number}\}$. \\
\end{enumerate}
\end{customdefn}

\begin{customthm}{10.28 -- Main Theorem 5}
For any transfinite $\times$-number $\lambda$:
\begin{enumerate}
\item $\mathbb{Q}_\lambda$ is a set.
\item $\leq_\lambda$ is a total ordering on $\mathbb{Q}_\lambda$.
\item $+_\lambda$ is a function.
\item $-_\lambda$ is a function.
\item $\times_\lambda$ is a function.
\item $\div_\lambda$ is a function.
\item $\mathcal{Q}_\lambda$ is a field.
\item $\mathcal{O}_\mathcal{Q}$ is a proper class.
\end{enumerate}
\end{customthm}
\begin{proof}
To see that $\mathbb{Q}_\lambda$ is a set, we consider that $\mathbb{Q}_\lambda\subset(\lambda\times\lambda\times\lambda\times\lambda)$ as a consequence of Theorem {\it 10.4}, and $\lambda\times\lambda\times\lambda\times\lambda\subset\mathcal{P}\lambda$, and since $\lambda$ is a set by Theorem {\it 2.12} and $\mathcal{P}\lambda$ is then a set by the power set axiom, $\mathbb{Q}_\lambda$ is also a set. This completes the proof of {\it (1)}. {\it (2-6)} follow immedately from Theorems {\it 10.14}, {\it 10.10}, {\it 10.12}, {\it 10.17}, and {\it 10.19}, with closure following from the fact that $\lambda$ is a $\times$-number together with Definition 8.6 and Theorem 8.7. {\it (7)} then immedately follows from {\it (1-6)} together with Main Theorem 4 and Definition {\it 10.21}. To prove {\it (8)}, we simply observe that every transfinite cardinal number is a $\times$-number, thus $\mathbb{H}=<\alpha:\mathbb{Q}_\alpha\in \mathcal{O}_\mathcal{Q}>$ is a function with $dmn\mathbb{H}=\mathcal{O}_\mathcal{Q}$ and $Card\subseteq rng\mathbb{H}$. Since $Card$ is a proper class, if $\mathcal{O}_\mathcal{Q}$ was a set we would have a contradiction by the axiom of substitution, since $rng\mathbb{H}$ would be a set, thus we conclude that $\mathcal{O}_\mathcal{Q}$ is a proper class.  This completes the proof.  \\
\end{proof}

Accordingly, we see that every $\mathbb{Q}_\lambda$ is a unique field that is \underline{not} field-isomorphic to any other $\mathbb{Q}_{\lambda'}$, with $\mathbb{Q}_\infty$ being the only Field and consequently not a member of $\mathcal{O}_\mathcal{Q}$. Note that $\omega$ is the smallest transfinite $\times$-number, while by Theorem {\it 7.7 (3)} we have that the next option for 'closing out' a field is at $\omega^\omega$ -- all of the other comments after Main Theorem {\it 2} also apply here. \\ Prior to demonstrating the unique nature of $\mathbb{Q}$, we must first define the notion of a finite sum in a field, and an Archimedean field. \\

\begin{customdefn}{10.29}
Let $\mathfrak{F}=\langle\mathbb{F},\leq_\mathbb{F},+_\mathbb{F},-_\mathbb{F},\times_\mathbb{F},\div_\mathbb{F},0_\mathbb{R},1_\mathbb{F}\rangle$ be a field. If $\mu\in^\omega\mathbb{F}$, we define $$\Sigma(\mu,n)=\mu_0+\mu_1+\dots+\mu_{n-1}.$$ We will write $\Sigma_{i<n}\mu_i$ instead of $\Sigma(\mu,n)$. \\
\end{customdefn}

\begin{customdefn}{10.30}
For any element $f$ in an ordered field $\mathbb{F}\in\mathfrak{F}$, we define $$|f|=\begin{cases}f,&\ 0_\mathbb{F}\leq_\mathbb{F}f, \\ -f,&\ f<_\mathbb{F}0_\mathbb{F}.\end{cases}$$
A field $\mathfrak{F}$ is {\bf Archimedean} iff for any $p,q\in\mathbb{F}$ there exists some $n$ such that $$|q|\leq_\mathbb{F}\Sigma_{i<n}|p|,$$ $$|p|\leq_\mathbb{F}\Sigma_{i<n}|q|.$$ \\
\end{customdefn}

Note that we have not defined the notion of being Archimedean for a Field -- this is because the only proper class fields we will construct, $\mathbb{Q}_\infty$ and $\mathbb{R}_\infty=N_0$, do not even come close to being Archimedean. \\

\begin{customthm}{10.31 -- Main Theorem 6}
$$\mathbb{Q}_\lambda\ \text{is Archimedean}\iff\lambda=\omega.$$
\end{customthm}
\begin{proof}
We first observe that $\mathbb{Q}_\omega=\mathbb{Q}$, and $$p,q\in\mathbb{Q}\implies\exists a,b,c,d\in\mathbb{Z}\big[|p|=\frac{a}{b}\wedge |q|=\frac{c}{d}\big],$$ with $0\leq a$ and $0\leq b$.  If $c=d$, then since $\mathbb{Z}$ is cyclic by Main Theorem {\it 3} we have that $a$ and $b$ are both finite sums of $1$s, thus if $a\leq b$ we may simply add more chunks of $a$ $1s$ to $a$ until we have more than $b$ $1's$. The same is obviously true if $b\leq a$.  If $c\neq d$, then we can make an identical argument about $ad$ and $cb$ since $0<c,d\in\mathbb{Z}$, thus $\mathbb{Q}$ is Archimedean. This completes the proof one way. \\ Now suppose $\lambda\neq\omega$; we then have that $\omega\in\mathbb{Q}_\lambda$ and $|\omega|$ is greater than $\Sigma_{i<n}|1|$ for all $n$, thus $\mathbb{Q}_\lambda$ is not Archimedean. This completes the proof. \\
\end{proof}

We thusly see that $\mathbb{Q}$ forms the only Archimedean ordered field of fractions, and that this is a nigh direct result of the fact that $\mathbb{Z}$ is the only cyclic ring in our construction. We are now prepared to define the Surreal numbers, and explicit examples of all of Hausdorffs $\eta_\varepsilon$-fields -- prior to doing so, we will prove one last quick Lemma regarding $\mathbb{Q}_\infty$, showing that it does not contain any of the 'irrational' numbers in $\mathbb{R}$ as one might expect. \\

\begin{customlem}{10.32} \
\begin{enumerate}
\item $\mathbb{Q}_\infty$ is dense in itself.
\item $\mathbb{Q}$ is not dense in any $\mathbb{Q}_\lambda$ for $\lambda>\omega$. 
\end{enumerate}
\end{customlem}
\begin{proof}
Suppose $a,b\in\mathbb{Q}_\infty$ such that $a<b$. We then have that $a<\frac{a+b}{2}<b$, and since $a$ and $b$ were arbitrary in $\mathbb{Q}_\infty$ this completes the proof of {\it (1)}. \\ For {\it (2)}, we first observe that $\frac{1}{\omega}\in\mathbb{Q}_{\omega^\omega}$ and $\mathbb{Q}_{\omega^\omega}$ is the smallest field containing $\mathbb{Q}$ by Theorem {\it 7.7}. We then have that for any $q\in\mathbb{Q}\subset\mathbb{Q}_{\omega^\omega}$, there does not exist any $p\in\mathbb{Q}$ such that $q<p<q+\frac{1}{\omega}$, thus $\mathbb{Q}$ is not dense in $\mathbb{Q}_{\omega^\omega}$. Since $\mathbb{Q}_{\omega^\omega}$ is the smallest field containing $\mathbb{Q}$ and consequently every larger field will also contain $\mathbb{Q}_{\omega^\omega}$ and $\mathbb{Q}$, this completes the proof.
\end{proof}

It is a well known fact that the rational numbers are dense in the irrational numbers -- an immedate consequence of this fact, together with the above theorem, is that $\mathbb{Q}_\lambda$ for $\lambda>\omega$ does not contain any irrational numbers and thusly $\mathbb{Q}_\infty$ does not either, as it is the union of all $\mathbb{Q}_\lambda$. Note that $\mathbb{Z}_\infty$ is also the union of all $\mathbb{Z}_\lambda$ -- we will utilize this fact in reverse to define the Surreal numbers in the next section.  \\

\vspace{50mm}

\section{The Surreal Numbers}

This section will be brief compared to sections {\bf 9} and {\bf 10}, as we have already done the large bulk of work necessary to make the definitions we wish to make here.  I believe that the following development could also be done in terms of Hahn series in $\mathbb{Q}_\infty[[T^{\mathbb{Z}_\infty}]]$ or $\mathbb{Q}_\infty[[T^{\mathbb{Q}_\infty}]]$, however a construction in terms of Dedekind cuts seems more natural to me and more closely analogous to Conways original construction. \\  We will begin by constructing set-sized saturated fields, as we cannot directly 'cut up' $\mathbb{Q}_\infty$ since each left and right class would be a proper class. Note that since each $\mathbb{Q}_\lambda$ is a set, cuts in $\mathbb{Q}_\lambda$ will also be sets. \\

\begin{customdefn}{11.1}
A {\bf $\lambda$-cut} is a class $L$ such that
\begin{enumerate}
\item $L\subset\mathbb{Q}_\lambda$, and we define $R=\mathbb{Q}_\lambda\sim L$.
\item $L$ has no greatest element.
\item $\big[l\in L\wedge r\in R\big]\implies l<r$. We may equivalently write that $L<R$.
\end{enumerate}
$L$ may be called a {\bf left set}, and $R$ may be called a {\bf right set}. If $R$ has a least element, $L$ is called a {\bf surrational} $\lambda$-cut.  If $R$ has no least element, then $L$ will be called an {\bf irrational} $\lambda$-cut. \\
\end{customdefn}

Note that an immedate consequence of {\it (1)} is that $L\cup R=\mathbb{Q}_\lambda$, and {\it (3)} implies that $L\cap R$ is empty. \\

\begin{customdefn}{11.2}
$$\mathbb{R}_\lambda=\{L:L\ \text{is a}\ \lambda-\text{cut}\}.$$ $\mathbb{R}_\lambda$ will be called the {\bf $\lambda$-Real numbers}. Lowercase italic letters from the end of the alphabet $x,y,z$ will denote real numbers unless otherwise specified, and if $x=L$ we will refer to $L_x=L$ as the {\bf left set of $x$} and $R_x=R=\mathbb{Q}_\lambda\sim L_x$ as the {\bf right set of $x$}. We wll also write $l_x$ for an arbitrary member of $L_x$, and $r_x$ for an arbitrary member of $R_x$. \\
\end{customdefn}

We thusly see that the members of $\mathbb{R}_\lambda$ are cuts in $\mathbb{Q}_\lambda$, in the classical sense -- we are simply cutting a much 'longer' line than we typically do to create the Reals. We now identify particular members of $\mathbb{R}_\lambda$ that will correspond to cuts at surrational numbers. \\

\begin{customthm}{11.3}
For all $q\in\mathbb{Q}_\lambda$, the subset $L_q$ defined by $$L_q=\{\frac{a}{b}\in\mathbb{Q}_\lambda:a\times2^{nd}q<b\times1^{st}q\}$$ is a surrational $\lambda$-cut.
\end{customthm}
\begin{proof}
We first observe that $q<p\implies2^{nd}p1^{st}q\leq1^{st}p2^{nd}q\implies p\notin L_q$, thus $L_q\subset\mathbb{Q}_\lambda$, and $L_q\cup R_q=L_q\cup\big(\mathbb{Q}_\lambda\sim L_q\big)=\mathbb{Q}_\lambda$ by definition.  For {\it (2)}, we observe that $$2^{nd}q\times a<1^{st}q\times b\iff \frac{a}{b}<q\iff \frac{a}{b}<\frac{a\times2^{nd}q+b\times1^{st}q}{2\times b\times2^{nd}q}<q,$$ thus $L_q$ has no greatest element. For {\it (3)} we observe that $R_q=\{\frac{a}{b}\in\mathbb{Q}_\lambda:b\times1^{st}q\leq a\times2^{nd}q\}$, and suppose that $l\in L_q$ and $r\in R_q$. We then have that $1^{st}l2^{nd}q<2^{nd}l1^{st}q$ and $1^{st}q2^{nd}r\leq2^{nd}q1^{st}r$, thus $$1^{st}l2^{nd}q1^{st}q2^{nd}r<2^{nd}l1^{st}q2^{nd}q1^{st}r\implies 1^{st}l2^{nd}r<2^{nd}l1^{st}r\implies l<r,$$ completing the proof that $L_q$ is a $\lambda$-cut. Finally, we observe that $q\in R_q$ and $$p<q\implies 2^{nd}q1^{st}p<1^{st}q2^{nd}p\implies p\in L_q,$$ thus $q$ is the least member of $R_q$. This completes the proof that $L_q$ is a surrational $\lambda$-cut for all $q\in\mathbb{Q}_\lambda$. \\
\end{proof}

Thus wee see that each surrational number can be identified with a surrational $\lambda$-cut. We will now identify cuts that will act as $n^{th}$ roots for all surrational numbers -- if the surrational number is not already the product of some other surrational number multiplied by itself $n$ times, it will correspond to an irrational cut. \\

\begin{customthm}{11.4}
For all surrational numbers $q$, the cut $L_{\sqrt[n]{q}}$ defined by $$L_{\sqrt[n]{q}}=\{\frac{a}{b}\in\mathbb{Q}_\lambda:a^n\times2^{nd}q<b^n\times1^{st}q\}$$ is a surrational $\lambda$-cut if $q=p^n$ for some $p\in\mathbb{Q}_\lambda$, and an irrational $\lambda$-cut otherwise.
\end{customthm}
\begin{proof}
We first observe that $q<p^n\implies2^{nd}p^n1^{st}q\leq1^{st}p^n2^{nd}q\implies p\notin L_{\sqrt[n]{q}}$, thus $L_{\sqrt[n]{q}}\subset\mathbb{Q}_\lambda$, and $L_{\sqrt[n]{q}}\cup R_{\sqrt[n]{q}}=L_{\sqrt[n]{q}}\cup\big(\mathbb{Q}_\lambda\sim L_{\sqrt[n]{q}}\big)=\mathbb{Q}_\lambda$ by definition. For {\it (2)}, we observe that $$2^{nd}q\times a^n<1^{st}q\times b^n\implies \frac{a^n}{b^n}<q\implies \frac{a^n}{b^n}<\frac{a^n\times2^{nd}q+b^n\times1^{st}q}{2\times b^n\times2^{nd}q}<q,$$ thus $L_{\sqrt[n]{q}}$ has no greatest element. For {\it (3)} we observe that $R_q=\{\frac{a}{b}\in\mathbb{Q}_\lambda:b^n\times1^{st}q\leq a^n\times2^{nd}q\}$, and suppose that $l\in L_{\sqrt[n]{q}}$ and $r\in R_{\sqrt[n]{q}}$. We then have that $1^{st}l^n2^{nd}q<2^{nd}l^n1^{st}q$ and $1^{st}q2^{nd}r^n\leq2^{nd}q1^{st}r^n$, thus $$1^{st}l^n2^{nd}q1^{st}q2^{nd}r^n<2^{nd}l^n1^{st}q2^{nd}q1^{st}r^n\implies 1^{st}l^n2^{nd}r^n<2^{nd}l^n1^{st}r^n\implies l^n<r^n,$$ completing the proof that $L_{\sqrt[n]{q}}$ is a $\lambda$-cut. Finally, suppose $q=p^n$; we then have that $p\in R_{\sqrt[n]{q}}$ and $$r<p\implies 2^{nd}p1^{st}r<1^{st}p2^{nd}r\implies (2^{nd}p1^{st}r)^n<(1^{st}p2^{nd}r)^n\implies 2^{nd}p^n1^{st}r^n<1^{st}p^n2^{nd}r^n$$ $$\implies 2^{nd}q1^{st}r^n<1^{st}q2^{nd}r^n\implies r\in L_{\sqrt[n]{q}},$$ thus $p$ is the least member of $R_{\sqrt[n]{q}}$ and $L_{\sqrt[n]{q}}$ is consequently a surrational $\lambda$-cut. If there is no $p$ such that $p^n=q$, then $R_{\sqrt[n]{q}}$ has no least element since the equality is never satisfied, thus $L_{\sqrt[n]{q}}$ is an irrational $\lambda$-cut.  This completes the proof. \\
\end{proof}

Thusly we see that all surrational cuts will have $n^{th}$ roots, and further that the $n^{th}$ root of any cut can be defined in an analogous fashion; accordingly, $\mathbb{R}_\lambda$ will be a real-closed field. For example, we now have $$\sqrt{2}=L_{\sqrt{2}}=\{\frac{a}{b}:a^2\leq b^2\times2\},$$ $$\sqrt{\omega}=L_{\sqrt{\omega}}=\{\frac{a}{b}:a^2\leq b^2\times\omega\},$$ etc. From now on, we will associate surrational numbers in general with their corresponding cuts. We now define the ordering we will use in $\mathbb{R}_\lambda$. \\

\begin{customdefn}{11.5}
$$\dot\preceq_\lambda=\{(x,y):L_x\subseteq L_y\}.$$ We will write $x\dot\preceq y$ iff $(x,y)\in\dot\preceq_\lambda$. \\
\end{customdefn}

Thus we see that for $\lambda$-real numbers $x$ and $y$, $x\dot\preceq y$ if and only if $x\subseteq y$. \\

\vspace{20mm}

\begin{customthm}{11.6}
$\dot\preceq_\lambda$ is a total ordering on $\mathbb{R}_\lambda$. 
\end{customthm}
\begin{proof}
Suppose $x\dot\preceq y$ and $y\dot\preceq z$. We then have that $L_x\subseteq L_y\subseteq L_z\implies L_x\subseteq L_z$, thus $x\dot\preceq z$. This completes the proof that $\dot\preceq_\lambda$ is transitive. \\ Now suppose that $x\dot\preceq y$ and $y\dot\preceq x$, so $L_x\subseteq L_y$ and $L_y\subseteq L_x$, thus $L_x=L_y\implies x=y$. This completes the proof that $\dot\preceq_\lambda$ is antisymmetric, completing the proof that it is an ordering. \\ Now suppose $x,y\in\mathbb{R}_\lambda$. If there exists $q\in L_x$ such that $q\notin L_y$, then $q\in R_y\implies\forall p\big[p\in L_y\Rightarrow p<q\big]\implies L_y\subseteq L_x$, thus $y\dot\preceq x$. If there does not exist any such $q$, then $L_x\subseteq L_y$ thus $x\dot\preceq y$. Since $x$ and $y$ were arbitrary in $\mathbb{R}_\lambda$, this completes the proof that $\dot\preceq_\lambda$ is a total ordering on $\mathbb{R}_\lambda$. This completes the proof.\\
\end{proof}

We now define the binary operations we will use in $\mathbb{R}_\lambda$. \\

\begin{customdefn}{11.7}
For all $U,V\subseteq\mathbb{Q}_\infty$, we define $U\tilde+V=\{u+v\}$, where $u\in U$ and $v\in V$. We then define $$\tilde+=\{\big((x,y),z\big):L_z=L_x\tilde+L_y\}.$$ $\tilde+$ will be called {\bf Surreal addition}, and we will write $x\tilde+y=z$ iff $\big((x,y),z\big)\in\tilde+$. \\
\end{customdefn}

\begin{customthm}{11.8}
$\tilde+$ is a function.
\end{customthm}
\begin{proof}
Suppose that $x\tilde+y=z$ and $x\tilde+y=z'$. We then have that $L_z=L_x\tilde+L_y=L_{z'}\implies z=z'$. This completes the proof.  \\
\end{proof}

\begin{customdefn}{11.9}
For all $U\subset\mathbb{Q}_\infty$ such that $U$ has a minimum element, we define $U'=U\sim\{\min{U}\}$, and $\tilde-U=\{-u'\}$, where $u'\in U'$. For all $U\subset\mathbb{Q}_\infty$ such that $U$ has no minimum element, we define $\tilde-U=\{-u\}$. We then define $$\tilde-=\{(x,y):L_y=\tilde-R_x\}.$$ $\tilde-$ will be called {\bf Surreal negation}, and we will write $\tilde-x=y$ iff $(x,y)\in\tilde-$. Further, we will write $x\tilde-y=z$ iff $x\tilde+(\tilde-y)=z$. \\
\end{customdefn}

Note that we have removed the minimum element from any $R_x$ that had one, by definition. \\

\begin{customthm}{11.10}
$\tilde-$ is a function.
\end{customthm}
\begin{proof}
Suppose $\tilde-x=y$ and $\tilde-x=z$. We then have that $y=\tilde-R_x=z$. This completes the proof. \\
\end{proof}

We will now define the expected additive and multiplicative identities in $\mathbb{R}_\lambda$. \\

\begin{customdefn}{11.11}
\begin{enumerate}
\item $\tilde0=\{q:q<0\}=\mathbb{Q}_\lambda^-$.
\item $\tilde1=\{q:q<1\}$. \\
\end{enumerate}
\end{customdefn}

We thusly see that these identities in $\mathbb{R}_\lambda$ simply amount to cuts made at the identities in $\mathbb{Q}_\lambda$. \\

\vspace{20mm}

\begin{customdefn}{11.12}
We define $x\tilde\times y=L_{x\tilde\times y}$ for all $x$ and $y$. \\ If $0\dot\preceq x$ and $0\dot\preceq y$, we define $$L_{x\tilde\times y}=\{q:\exists l_x\exists l_y\big[0\leq l_x\wedge0\leq l_y\wedge q\leq l_xl_y\big]\}.$$ If $0\dot\preceq x$ and $y\dot\prec 0$, then we define $$L_{x\tilde\times y}=\{q:\exists l_x\exists l_y\big[0\leq l_x\wedge l_y<0\wedge q\leq l_xl_y\big]\}.$$ If $0\dot\preceq y$ and $x\dot\prec 0$, then we define $$L_{x\tilde\times y}=\{q:\exists l_x\exists l_y\big[l_x<0\wedge 0\leq l_y\wedge q\leq l_xl_y\big]\}.$$ If $x\dot\prec0$ and $y\dot\prec0$, we define $$L_{x\tilde\times y}=\{q:\exists l_x\exists l_y\big[l_x<0\wedge l_y<0 \wedge q\leq l_xl_y\big]\}.$$ We then define $$\tilde\times=\{\big((x,y),z\big):z=x\tilde\times y\}.$$ $\tilde\times$ will be called {\bf Surreal multiplication}, or simply {\bf multiplication}, and we will write $x\tilde\times y=z$ iff $\big((x,y),z\big)\in\tilde\times$. \\
\end{customdefn}

\begin{customthm}{11.13}
$\tilde\times$ is a function.
\end{customthm}
\begin{proof}
Suppose $x\tilde\times y=z$ and $x\tilde\times y=z'$. For any placement of $x$ and $y$, we then have $z=L_{x\tilde\times y}=z'$. This completes the proof. \\
\end{proof}

\begin{customdefn}{11.14}
$$\tilde\div=\{(x,y):y=\{q:\forall l_x\big[ql_x<1\big]\}\}.$$ $\tilde\div$ will be called {\bf Surreal inversion}, or simply {\bf inversion}. We will write $\frac{1}{x}=y$ iff $(x,y)\in\tilde\div$. \\
\end{customdefn}

\begin{customthm}{11.15}
$\tilde\div$ is a function.
\end{customthm}
\begin{proof}
Suppose $\frac{1}{x}=y$ and $\frac{1}{x}=z$. We then have that $y=\{q:\forall p\big[p\in L_x\implies pq<1\big]\}=z$. This completes the proof. \\
\end{proof}

We have now esablished all of the essential components of $\mathbb{R}_\lambda$ in a convienient and concise form -- we will now prove that $\mathbb{R}_\lambda$ together with the above ordering and binary operations forms a field for all $\lambda$. \\

\begin{customdefn}{11.16}
$$\mathcal{R}_\lambda=\langle\mathbb{R}_\lambda,\dot\preceq_\lambda,\tilde+,\tilde-,\tilde\times,\tilde\div,\tilde0,\tilde1\rangle.$$ $\mathcal{R}$ will be called the {\bf $\lambda$-Real numbers$_+$}. \\
\end{customdefn}

\begin{customthm}{11.17 -- Main Theorem 7}
If $\lambda$ is a $\times$-number, then $\mathcal{R}_\lambda$ is an ordered field.
\end{customthm}
\begin{proof}
Suppose $\lambda$ is a $\times$-number. To see that $\mathbb{R}_\lambda$ is a set, we simply observe that $\mathbb{Q}_\lambda$ is a set by Main Theorem {\it 6} and that $\mathcal{P}\mathbb{Q}_\lambda$ is then a set by the power set axiom, and $\mathbb{R}_\lambda\subset\mathcal{P}\mathbb{Q}_\lambda$, thus $\mathbb{R}_\lambda$ is also a set. \\ For {\it (1)}, we simply observe that $$x\tilde+(y\tilde+z)=x\tilde+\{l_y+l_z\}=\{l_x+(l_y+l_z)\}=\{(l_x+l_y)+l_z\}=\{l_x+l_y\}\tilde+z=(x\tilde+y)\tilde+z,$$ since $+$ is associative in $\mathbb{Q}_\lambda$. This completes the proof of {\it (1)}. \\ For {\it (2)}, we simply observe that $$x\tilde+y=\{l_x+l_y\}=\{l_y+l_x\}=y\tilde+x,$$ since $+$ is commutative in $\mathbb{Q}_\lambda$. This completes the proof {\it (2)}. \\ For {\it (3)}, we first observe that $x\tilde+\tilde0=\{l_x+l_0\}$. Suppose that $q\in L_x$, thus there exists some $p\in L_x$ such that $q<p$ since $L_x$ has no greatest element. We then have that $p+s=p$ for some $s\in\mathbb{Q}_\lambda^-$, thus $q\in\{l_x+l_0\}=x\tilde+\tilde0$, and since $q$ was arbitrary in $L_x$ we thuslt have that $x\subseteq x\tilde+\tilde0$. Now suppose that $r\in L_{x\tilde+0}$, so $r=l_x+l_0$ for some $l_x$ and $l_0$, and consequently we have that $r<l_x\implies r\in L_x$. Since $r$ was arbitrary in $L_{x\tilde+0}$, we thusly have that $x\tilde+\tilde0\subseteq x$. These facts together establish that $x\tilde+\tilde0=x$, completing the proof of {\it (3)}. \\ For {\it (4)}, we simply observe that $$x\tilde+(\tilde-x)=L_x\tilde+(\tilde-R_x)=\mathbb{Q}_\lambda^-=\tilde0.$$ This completes the proof of {\it (4)}, completing the proof that $\mathcal{R}_\lambda$ is an Abelian group under $\tilde+$ and $\tilde-$. \\ For {\it (5)} we simply observe that $p(rs)=(pr)s$ for all $p,r,s\in\mathbb{Q}_\lambda$, thus $q<p(rs)\iff q<(pr)s$ and consequently $x\tilde\times(y\tilde\times z)=(x\tilde\times y)\tilde\times z$ for any placements of $x$, $y$ and $z$ since $\mathbb{Q}_\lambda$ is an ordered field. This completes the proof of {\it (5)}. \\ For {\it (6)}, we observe that $rs=sr$ for all $r,s\in\mathbb{Q}_\lambda$ and thusly $q<rs\iff q<sr$, thus $x\tilde\times y=y\tilde\times x$ for all placements of $x$ and $y$ since $\mathbb{Q}_\lambda$ is an ordered field. This completes the proof of {\it (6)}. \\ For {\it (7)}, we first suppose that $\tilde0\dot\preceq x$ and observe that $\tilde0\dot\preceq1$, thus $$x\tilde\times1=L_{x\tilde\times1}=\{q:\exists l_x\exists l_1\big[0\leq l_x\wedge 0\leq l_1\wedge q<l_xl_1\big]\}.$$ Suppose $q\in L_x$, thus there exists some $l_x$ such that $q<l_x$ since $L_x$ has no greatest element. Consequently there exists some $l_1$ such that $0<l_1<1$ and $q=l_1l_x$, thus we may simply observe that $$l_1<\frac{l_1+1}{2}<1\implies\frac{l_1+1}{2}\in L_1$$ and that $q<\frac{l_1+1}{2}l_x \implies q\in L_{x\tilde\times1}$. Since $q$ was arbirtary in $L_x$, we thusly have that $L_x\subseteq L_{x\tilde\times1}$. Now suppose that $s\in L_{x\tilde\times1}$, thus there exist $0\leq l_x$ and $0\leq l_1$ such that $s<l_xl_1$, thus $s<l_x\implies s\in L_x$. Since $s$ was arbitrary in $L_{x\tilde\times1}$, we have that $L_{x\tilde\times1}\subseteq L_x$ and consequently $L_x=L_{x\tilde\times1}\implies x=x\tilde\times1$. The proof if $x\dot\prec0$ is completely symmetric. This completes the proof of {\it (7)}. \\ We now prove that multiplicative inverses exist in $\mathbb{R}_\lambda$. Suppose $\tilde0\dot\prec x$, so $\exists l_x\big[0<l_x\big]$, and since there exists some $l'_x>l_x>0$ in $L_x$ and thusly $0<l_x\times\frac{1}{l'_x}=\frac{l_x}{l'_x}<1\implies0<\frac{1}{l'_x}\in L_{\frac{1}{x}}$, thus $$\tilde0\dot\prec L_{\frac{1}{x}}=\frac{1}{x}.$$ We consequently have that $$x\tilde\times\frac{1}{x}=\{q:\exists l_x\exists l_{\frac{1}{x}}\big[0\leq l_x\wedge 0\leq l_{\frac{1}{x}}\wedge q<l_xl_{\frac{1}{x}}\big]\}=\{q:q<1\}=\tilde1.$$ The proof if $x\dot\prec\tilde0$ is entirely symmetric, thus we have that inverses exist for all $x\in\mathbb{R}_\lambda$. This completes the proof that $\mathcal{R}_\lambda$ forms an Abelian group under $\tilde\times$ and $\tilde\div$. \\ For {\it (8)}, we first observe that $$x\tilde\times(y\tilde+z)=L_x\tilde\times(L_y\tilde+L_z)=L_x\tilde\times\{l_y+l_z\}.$$ Suppose that $\tilde0\dot\preceq x,y,z$, and $\tilde0\dot\preceq(y\tilde+z)=\{l_y+l_z\}$. We then have that $$L_x\tilde\times\{l_y+l_z\}=\{q:\exists l_x\exists l_{y\tilde+z}\big[0\leq l_x\wedge0\leq l_{y\tilde+z}\wedge q<l_xl_{y\tilde+z}\big]\},$$ $$q<l_xl_{y\tilde+z}\iff\exists l_y\exists l_z\big[q<l_x(l_y+l_z)=l_xl_y+l_xl_z\big].$$ Further, we have that $$(x\tilde\times y)\tilde+(x\tilde\times z)=\{q:\exists l_x\exists l_y\big[0\leq l_x\wedge0\leq l_y\wedge q<l_xl_y\big]\}\tilde+\{q:\exists l'_x\exists l_z\big[0\leq l'_x\wedge0\leq l_z\wedge q<l'_xl_z\big]\},$$ and letting $l'_x=l_x$ we have that $q<l_xl_y$ together with $q<l_xl_z$ yields $q<l_xl_y+l_xl_z$. We thusly see that the cuts defined by $x\tilde\times(y\tilde+z)$ and $(x\tilde+y)\tilde\times(x\tilde+z)$ are identical, thus $x\tilde\times(y\tilde+z)=(x\tilde+y)\tilde\times(x\tilde+z)$ as desired. The proof if the various elements lie before $\tilde0$ follows by symmetry, since $\mathbb{Q}_\lambda$ forms an ordered field. This completes the proof of {\it (8)}, completing the proof that $\mathcal{R}_\lambda$ forms a field under $\tilde+,\tilde-,\tilde\times,$ and $\tilde\div$. \\ For {\it (9)}, suppose that $x\dot\prec y$ and $x'\dot\prec y'$, so $L_x\subset L_y$ and $L_{x'}\subset L_{y'}$, and consequently $$x\tilde+x'=L_x\tilde+L_{x'}=\{l_x+l_{x'}\}\subset\{l_y+l_{y'}\}=L_y\tilde+L_{y'}=y\tilde+y'$$ $$\implies x\tilde+x'\dot\prec y+y'.$$ This completes the proof of {\it (9)}. \\ For {\it (10)}, suppose that $\tilde0\dot\prec x$ and $\tilde0\dot\prec y$, so $0\in L_x$ and $0\in L_y$. Since $L_x$ and $L_y$ have no greatest elements, we thusly have that there exist $l_x>0$ and $l_y>0$, thus $0<l_xl_y$ and consequently $L_0\subset L_{x\tilde\times y}\implies \tilde0\dot\prec x\tilde\times y$. This completes the proof of {\it (10)}, completing the proof that $\mathbb{R}_\lambda$ forms an ordered field. Since $\lambda$ was an arbitrary $\times$-number, this completes the proof that $\mathbb{R}_\lambda$ forms an ordered field for all $\times$-numbers $\lambda$. \\
\end{proof}

We have now constructed explicit examples of all of Hausdorrffs $\eta_\varepsilon$-fields, each being isomorphic to some $\mathbb{R}_\lambda$. We now prove that there are a proper class of such fields, prior to proposing a new definition for the Surreal numbers. \\

\begin{customdefn}{11.18}
$$\mathbb{O}_\mathbb{R}=\{\mathbb{R}_\lambda:\lambda\ \text{is a}\ \times\text{- number}\}.$$
$$\mathcal{O}_\mathcal{R}=\{\mathcal{R}_\lambda:\lambda\ \text{is a}\ \times\text{- number}\}.$$ \\
\end{customdefn}

\begin{customthm}{11.19}
$\mathbb{O}_\mathbb{R}$ and $\mathcal{O}_\mathcal{R}$ are proper classes. 
\end{customthm}
\begin{proof}
We observe that $\mathbb{G}=\langle \lambda:\mathbb{R}_\lambda\in\mathbb{O}_\mathbb{R}\rangle$ and $\mathbb{H}=\langle \lambda:\mathcal{R}_\lambda\in\mathcal{O}_\mathcal{R}\rangle$ are functions with $dmn\mathbb{H}=\mathcal{O}_\mathcal{R}$, $dmn\mathbb{G}=\mathbb{O}_\mathbb{R}$, $Card\subset rng\mathbb{G}$, and $Card\subset rng\mathbb{H}$, thus if $\mathbb{O}_\mathbb{R}$ or $\mathcal{O}_\mathcal{R}$ were sets then $rng\mathbb{H}$ or $rng\mathbb{G}$ would also be a set by the substitution axiom and $Card$ would thusly also be a set, a contradiction since $Card$ is a proper class. This completes the proof. \\
\end{proof}

\begin{customdefn}{11.20}
$$\mathbb{R}=\mathbb{R}_\omega.$$ $\mathbb{R}$ will be called the {\bf Real numbers}. \\
\end{customdefn}

We thusly see that we can obtain the standard real numbers by making the smallest choice possible, $\lambda=\omega$. I propose that many definitions in the Real numbers that involve notions of finiteness, like being compact or bounded, are relics of this choice. Below, I offer an extension of these notions to $\mathbb{R}_{\omega^\omega}$. \\

\begin{customdefn}{11.21}
A class $\mathbb{A}\subseteq\mathbb{R}_{\omega^\omega}$ is {\bf $\omega$-compact} iff for any family of open sets $\mathbb{X}=\{\mathbb{X}_\alpha\}_{\alpha<\varepsilon}\subseteq\mathcal{P}\mathbb{R}_{\omega^\omega}$ such that $\mathbb{A}\subseteq\bigcup_{\alpha<\varepsilon}\mathbb{X}_\alpha$, there exists some $\{\mathbb{X}_\beta\}_{\beta<\omega}\subseteq\{\mathbb{X}_\alpha\}_{\alpha<\varepsilon}$ such that $\mathbb{A}\subseteq\bigcup_{\beta<\omega}\mathbb{X}_\beta$. \\
\end{customdefn}

\begin{customdefn}{11.22}
A class $\mathbb{A}\subseteq\mathbb{R}_{\omega^\omega}$ is {\bf $\omega$-bounded} iff there exist some $x,y\in\mathbb{R}_{\omega^\omega}$ such that $\{x\}<\mathbb{A}<\{y\}$. \\
\end{customdefn}

We are now prepared to propose a new definition for the Surreal numbers. \\

\begin{customdefn}{11.22 -- Main Definition 1}
$$\mathbb{R}_\infty=\bigcup\mathbb{O}_\mathbb{R}.$$ $\mathbb{R}_\infty$ will be called the {\bf Surreal numbers}. \\
\end{customdefn}

Thus we see that the Surreal numbers are composed from cuts of all sizes in every $\mathbb{Q}_\lambda$. Note that every member of $\mathbb{R}_\infty$ is a proper subset of some cut in $\mathbb{Q}_\infty$ -- I believe that these proper class-sized cuts are the equivalent of the 'gaps' that appear in the Surreal numbers. I will have to retool our binary operations and ordering definition slightly, however the intuition behind these definitions will be identical to that behind the above definitions. Here we will only prove that $\mathbb{R}_\infty$ is a proper class -- I am not yet certain that this definition will produce the structure I'm looking for under modified binary operations and a modified ordering, and will accordingly explore that question and publish a subsequent paper when I have resolved my mind on the subject. \\

\begin{customthm}{11.24}
$\mathbb{R}_\infty$ is a proper class.
\end{customthm}
\begin{proof}
If $\mathbb{R}_\infty$ were a set, then we would have that $\mathcal{P}\mathbb{R}_\infty$ is a set by the power set axiom and $\mathbb{O}_\mathbb{R}\subset\mathcal{P}\mathbb{R}_\infty$, thus $\mathbb{O}_\mathbb{R}$ would be a set, a contradiction by Theorem {\it 11.19} -- we thusly conclude that $\mathbb{R}_\infty$ is a proper class. This completes the proof. \\
\end{proof}

Another potentially interesting option is to truncate $\mathbb{Z}_\infty$ at some inaccessible cardinal number $\Omega$, then create a field of fractions and cut it up directly (since the cuts will be sets), then explore the unique properties of each 'inaccessible field' defined by choosing different inaccessible cardinal numbers. I will define such a structure below, and explore it in greater depth in a subsequent paper. \\

\begin{customdefn}{11.25 -- Main Definition 2}
Let $\Omega$ be an inaccessible cardinal number. We then define 
\begin{enumerate}
\item $\mathbb{Z}_\Omega=\mathbb{Z}_\infty\upharpoonleft\upharpoonright\Omega=\{a:1^{st}a<\Omega\wedge2^{nd}a<\Omega\}$. $\mathbb{Z}_\Omega$ will be called the {\bf $\Omega$-inaccessible Integers}.
\item $\mathcal{Z}_\Omega=\langle\mathbb{Z}_\Omega,\leq_\Omega,+_\Omega,-_\Omega,\times_\Omega,0,1\rangle$. $\mathcal{Z}_\Omega$ will be called the {\bf $\Omega$-inaccessible Integers$_+$}.
\item $\mathbb{Q}_\Omega=\mathbb{Q}_\infty\upharpoonleft\upharpoonright \mathbb{Z}_\Omega$. $\mathbb{Q}_\Omega$ will be called the {\bf $\Omega$-inaccessible Rational numbers}.
\item $\mathcal{Q}_\Omega=\langle\mathbb{Q}_\Omega,\leq_\Omega,+_\Omega,-_\Omega,\times_\Omega,\div_\Omega,0,1\rangle$. $\mathbb{Q}_\Omega$ will be called the {\bf $\Omega$-inaccessible Rational numbers$_+$}.
\item $\mathbb{R}_\Omega=\{L:L\ \text{is an}\ \Omega\text{-cut}\}$. $\mathbb{R}_\Omega$ will be called the {\bf $\Omega$-inaccessible Real numbers}.
\item $\mathcal{R}_\Omega=\langle\mathbb{R}_\Omega,\leq_\Omega,+_\Omega,-_\Omega,\times_\Omega,\div_\Omega,0,1\rangle$. $\mathbb{R}_\Omega$ will be called the {\bf $\Omega$-inaccessible Real numbers$_+$}. \\
\end{enumerate}
\end{customdefn}

This concludes our discussion of real-closed fields -- we now briefly outline how to define $\lambda$-Complex numbers, $\Omega$-inaccessible Complex numbers, and the Surcomplex numbers (assuming that the above definition for the Surreal numbers is valid). \\

\section{The Surcomplex Numbers}

Here we provide an outline for how to define an algebraic closure for each $\mathbb{R}_\lambda$, and for $\mathbb{R}_\infty$ if it is isomorphic to the Surreal numbers. \\

\begin{customdefn}{12.1}
$$\mathbb{C}_\lambda=\mathbb{R}_\lambda\times\mathbb{R}_\lambda.$$ $\mathbb{C}_\lambda$ will be called the {\bf $\lambda$-Complex numbers}. Lowercase roman letters $\mathfrak{a},\mathfrak{b},\mathfrak{c},\dots$ will be used to denote arbitrary $\lambda$-complex numbers from now on unless otherwise specified. \\
\end{customdefn}

Thus we see that the $\lambda$-Complex numbers simply amount to $\mathbb{R}_\lambda^2$, with some extra structure. We will provide all of this structure with the next definition. \\

\begin{customdefn}{12.2} \
\begin{enumerate}
\item $\tilde{\hat{+}}=\{\big((\mathfrak{a},\mathfrak{b}),\mathfrak{c}\big):\mathfrak{c}=(1^{st}\mathfrak{a}+1^{st}\mathfrak{b},2^{nd}\mathfrak{a}+2^{nd}\mathfrak{b})\}$.
\item $\tilde{\hat{-}}=\{(\mathfrak{a},\mathfrak{b}):\mathfrak{b}=(-1^{st}\mathfrak{a},-2^{nd}\mathfrak{a})\}$.
\item $\tilde{\hat{\times}}=\{\big((\mathfrak{a},\mathfrak{b}),\mathfrak{c}\big):\mathfrak{c}=(1^{st}\mathfrak{a}1^{st}\mathfrak{b}-2^{nd}\mathfrak{a}2^{nd}\mathfrak{b},1^{st}\mathfrak{a}2^{nd}\mathfrak{b}+2^{nd}\mathfrak{a}1^{st}\mathfrak{b})\}$.
\item $\tilde{\hat{\div}}=\{(\mathfrak{a},\mathfrak{b}):\mathfrak{b}=\Big(\frac{1^{st}\mathfrak{a}}{1^{st}\mathfrak{a}^2+2^{nd}\mathfrak{a}^2},\frac{-2^{nd}\mathfrak{a}}{1^{st}\mathfrak{a}^2+2^{nd}\mathfrak{a}^2}\Big)\}.$
\item $\tilde{\hat{1}}=(1,0)$.
\item $\tilde{\hat{0}}=(0,0)$.
\item $i=(0,1)$. \\
\end{enumerate}
\end{customdefn}

\begin{customdefn}{12.3 -- Main Definition 3}
$$\mathcal{C}_\lambda=\langle\mathbb{C}_\lambda,\tilde{\hat{+}},\tilde{\hat{-}},\tilde{\hat{\times}},\tilde{\hat{\div}},\tilde{\hat{0}},\tilde{\hat{1}},i\rangle.$$ $\mathcal{C}_\lambda$ will be called the {\bf $\lambda$-Complex numbers$_+$}. \\
\end{customdefn}

$\mathcal{C}_\lambda$ thusly defined forms an Algebraically closed field, and we identify $(\mathfrak{a},\mathfrak{b})$ with $\mathfrak{a}+\mathfrak{b}i$, and we can form the standard Complex numbers by choosing $\lambda=\omega$. We now close out the paper by defining classes of inaccessible Complex numbers, and the Surcomplex numbers. \\

\begin{customdefn}{12.4 -- Main Definition 4}
For an inaccessible cardinal number $\Omega$, we define $$\mathbb{C}_\Omega=\mathbb{R}_\Omega\times\mathbb{R}_\Omega.$$ $\mathbb{C}_\Omega$ will be called the {\bf $\Omega$-inaccessible Complex numbers}. Further, we define $$\mathcal{C}_\Omega=\langle\mathbb{C}_\Omega,\tilde{\hat{+}},\tilde{\hat{-}},\tilde{\hat{\times}},\tilde{\hat{\div}},\tilde{\hat{0}},\tilde{\hat{1}},i\rangle.$$ $\mathcal{C}_\Omega$ will be called the {\bf $\Omega$-inaccessible Complex numbers$_+$}. \\
\end{customdefn}

\begin{customdefn}{12.5 -- Main Definition 5}
$$\mathbb{C}_\infty=\mathbb{R}_\infty\times\mathbb{R}_\infty.$$ $\mathbb{C}_\infty$ will be called the {\bf Surcomplex numbers}. Further, we define $$\mathcal{C}_\infty=\langle\mathbb{C}_\infty,\tilde{\hat{+}},\tilde{\hat{-}},\tilde{\hat{\times}},\tilde{\hat{\div}},\tilde{\hat{0}},\tilde{\hat{1}},i\rangle.$$ $\mathcal{C}_\infty$ will be called the {\bf Surcomplex numbers$_+$}. \\
\end{customdefn}

We have thusly shown that the Natural numbers, Integers, Rational numbers, Real numbers, and Complex numbers are all the smallest and simplest versions of constructions that are built on top of each other in the fashion demonstrated in this paper, with the Ordinals, Surintegers, Surrational numbers, Surreal numbers, and Surcomplex numbers representing the largest and most complicated versions of the constructions given in this paper.  For the reader, this hopefully sheds some light on the nature of the vast difference between $\mathbb{R}$ and the Surreal numbers, and provides us with some additional context with which we can attempt to develop a robust theory of analysis over the Surreal numbers. \\

\section{References}
\begin{enumerate}
\item {\it Introduction to Set Theory, J. Donald Monk}, 1969.
\item {\it Cardinal and Ordinal Numbers, W. Sierpinski}, 1958.
\item {\it Foundations of Analysis over Surreal Number Fields, N. Alling} 1987.
\item {\it Principles of Mathematical Analysis, W. Rudin} 1964.
\end{enumerate}

\end{document}